\newcommand\tr{\operatorname{tr}}
\newcommand\skw{\operatorname{skw}}
\renewcommand\div{\operatorname{div}}
\newcommand\curl{\operatorname{curl}}
\newcommand\diam{\operatorname{diam}}
\newcommand\Span{\operatorname{span}}
\newcommand\supp{\operatorname{supp}}
\newcommand\vol{\mathsf{vol}}
\newcommand\R{\mathbb{R}}
\newcommand\B{{\mathcal B}}
\newcommand\C{{\mathscr C}}
\newcommand\I{{\mathcal I}}
\renewcommand\P{{\mathcal P}}
\newcommand\Q{\mathcal Q}
\newcommand\M{{\mathcal M}}
\newcommand\T{{\mathcal T}}
\renewcommand\S{{\mathcal S}}
\renewcommand{\>}{\rangle}
\newcommand{\0}{\mathaccent23}
\newcommand\e{\mathbf e}
\newcommand\Ball{\mathfrak{B}}
\newcommand\sig{\partial}
\newcommand\Alt{\operatorname{Alt}}
\DeclareMathOperator{\sign}{sign}
\newcommand{\circo}{~{\tikz \draw[line width=0.2pt] circle(0.8pt);}~}
\newcommand{\dzero}[1]{\accentset{\circo\mkern-15mu\circo}{#1}}
\numberwithin{equation}{section}
\newtheorem{thm}{Theorem}[section]
\newtheorem{prop}[thm]{Proposition}
\newtheorem{lem}[thm]{Lemma}
\newtheorem{cor}[thm]{Corollary}
\newtheorem{remark}{Remark}
\begin{document}

\title[The bubble transform]
      {Local space-preserving decompositions for the bubble transform}

\author*[1]{\fnm{Richard} \sur{Falk}}\email{falk@math.rutgers.edu}
\author[2]{\fnm{Ragnar} \sur{Winther}}
  \email{rwinther@math.uio.no}
  \affil*[1]{\orgdiv{Department of Mathematics}, \orgname{Rutgers University},
    \orgaddress{\street{} \city{Piscataway}, \postcode{08854},
      \state{New Jersey}, \country{USA}}}
  \affil[2]{\orgdiv{Department of Mathematics}, \orgname{University of Oslo},
    \orgaddress{\street{} \city{Oslo}, \postcode{0316}, \state{}
      \country{Norway}}}

  \abstract{
  The bubble transform is a procedure to decompose
      differential forms, which are piecewise smooth with respect to a
      given triangulation of the domain, into a sum of local bubbles.
      In this paper, an improved version of a construction in the
      setting of the de Rham complex previously proposed by the
      authors is presented.  The major improvement in the
      decomposition is that unlike the previous results, in which the
      individual bubbles were rational functions with the property
      that groups of local bubbles summed up to preserve piecewise
      smoothness, the new decomposition is strictly space-preserving
      in the sense that each local bubble preserves piecewise
      smoothness.  An important property of the transform is that the
      construction only depends on the given triangulation of the
      domain and is independent of any finite element space. On the
      other hand, all the standard piecewise polynomial spaces are
      invariant under the transform.  Other key properties of the
      transform are that it commutes with the exterior derivative, is
      bounded in $L^2$, and satisfies the {\it stable decomposition
        property}. 
   }

\keywords{simplicial mesh, commuting decomposition of $k$-forms, 
preservation of piecewise polynomial spaces}
\pacs[MSC Classification]{65N30, 52-08
  \\
  \\
 ------------------------------------
 \\
 Communicated by Snorre Christiansen
\\
 ------------------------------------}

\date{January 23, 2025}
       
\maketitle 

\section{Introduction}\label{sec:intro}
The present paper is a continuation of the earlier papers
\cite{bubble-I} and \cite{bubble-II} of the authors.  While the first
paper was devoted to scalar valued functions, the second paper,
\cite{bubble-II}, develops a theory for decomposing differential forms
into a sum of functions, or bubbles, {which have local support on
  domains defined by a given simplicial mesh of the domain. The
  decomposition, which we refer to as {\it the bubble transform},
  commutes with the exterior derivative, and has the additional
  property that all the standard piecewise polynomial spaces utilized
  in the finite element exterior calculus (FEEC),
  cf. \cite{FEEC-book,acta,bulletin}, are in some sense invariant.
  However, the piecewise polynomial spaces are not {\it strictly}
  invariant for the decomposition constructed in \cite{bubble-II}. In
  general, each individual bubble is a rational function, but with the
  property that groups of the local bubbles sum up to preserve the
  desired polynomial structure.  The purpose of the theory presented
  here is to refine the earlier theory, so that we obtain a transform
  which is strictly {\it space-preserving} in the sense that each
  local bubble preserves piecewise smoothness and the standard
  piecewise polynomial spaces of FEEC.

The development of the bubble transform is partly motivated by the
$hp$-finite element method, i.e., where both piecewise polynomials of
arbitrary high degree and arbitrary small mesh cells are allowed.
While the analysis of finite element methods based on mesh refinements
and a fixed polynomial degree, i.e., the $h$-method, is by now well
understood, the corresponding analysis for the $p$-method, where the
polynomial degree is unbounded, is so far less canonical.  However,
the bubble transform represents a theory where the decomposition
itself, and the associated operator bounds, are obtained independently
of any finite element space. The entire construction only depends on a
given triangulation of the domain.  In fact, the decomposition is also
stable with respect to mesh refinements, and therefore the results
will apply to general $hp$-methods.  As a consequence, the
decomposition represents a new tool for understanding $hp$-methods. As
an example, consider the analysis of overlapping Schwarz
preconditioners. In \cite{additive-schwarz}, it is established how to
construct such preconditioners for second order elliptic problems in
the setting of $hp$-refinements.  But so far, the corresponding
verification for more general Hodge-Laplace problems appears to be
open.  In fact, the key obstacle for establishing such a bound is to
verify the so-called stable decomposition property, i.e., to establish
the existence of a bounded decomposition, cf.  \cite[Chapter
  2]{tosseli-widlund} and references given therein. Such a bound is
simply a special case of the bounds we derive here.  Although the
discussion in the present paper will be restricted to the basic theory
of the bubble transform, a more thorough motivation can be found in
\cite{bubble-II}.

In order to describe the main results of this paper, it is first
necessary to introduce some basic notation.  Throughout this paper,
$\Omega$ will be a bounded polyhedral domain in $\R^n$, and for $0 \le
k \le n$, the space of smooth differential $k$-forms on $\Omega$ will
be denoted $\Lambda^k(\Omega)$.  The construction of the bubble
transform is based on a simplicial mesh $\T$ of $\Omega$.  The
corresponding space, $\Lambda^k(\T)$, is the space of $k$--forms on
$\Omega$ which are piecewise smooth with respect to $\T$. More
precisely, the elements of $\Lambda^k(\T)$ are smooth on the closed
simplices $T$ in the triangulation and have single-valued traces on
each subsimplex of $\T$.  For the piecewise polynomial subspaces of
$\Lambda^k(\T)$, we will adopt the standard notation of FEEC, cf.
\cite{FEEC-book,acta,bulletin}, i.e., $\P_r\Lambda^k(\T)$ denotes the
space of piecewise polynomial forms of degree less than or equal to
$r$, while $\P_r^-\Lambda^k(\T)$ is the corresponding space of trimmed
polynomials.  The set of all subsimplices of $\T$ is denoted
$\Delta(\T)$, while $\Delta_m(\T)$ is the set of simplices of
dimension $m$.  For each $f \in \Delta(\T)$, the macroelement
$\Omega_f$ consists of the union of all $n$--simplices in $\Delta(\T)$
containing $f$ as a subsimplex.  Furthermore, $\T_f$ is the
restriction of the mesh $\T$ to the macroelement $\Omega_f$, and $\0
\Lambda^k(\T_f)$ is the subspace of $\Lambda^k(\T)$ consisting of
forms which have support on $\Omega_f$, i.e., which vanish on $\Omega
\setminus \Omega_f$.

For given $u \in \Lambda^k(\T)$, the bubble transform leads to a
decomposition of the form
\begin{equation}
  \label{main-decomp}
  u = W^ku + \sum_{f \in \Delta(\T)}B_f^ku = W^ku
  + \sum_{m=0}^n\sum_{f \in \Delta_m(\T)}B_f^ku,
\end{equation}
 where the bubbles $B_f^k u$ belong to $\0 \Lambda^k(\T_f)$, and where
 $W^k u$ is a trimmed piecewise linear $k$-form.  More precisely, we
 will show how to construct linear operators $W^k : \Lambda^k(\T) \to
 \P_1^-\Lambda^k(\T)$ and local operators $B_f^k : \Lambda^k(\T) \to
 \0 \Lambda^k(\T_f)$ which commute with the exterior derivative $d$,
 i,e.,
\[
dW^k = W^{k+1}d, \quad \text{and } dB_f^k = B_f^{k+1}d, \quad 0 \le k \le n-1.
\]
In fact, if we let $\B^k$ denote the collection of all the operators
$\{B_f^k\}_{f \in \Delta(\T)}$, such that we can view
\[
\B^k : \Lambda^k(\T) \to \prod_{f \in \Delta(\T)}\0 \Lambda^k(\T_f)
:= \0\Lambda^{k}(\T, \Delta),
\]
we can summarize and state that the diagram 
\begin{equation*}
%\label{B-commute}
\begin{CD}
\Lambda^k(\T) @>d>> 
\Lambda^{k+1}(\T)\\
@VV (W^k, \B^k) V  @VV(W^{k+1},\B^{k+1}) V\\
 \P_1^-\Lambda^k(\T) \times \0\Lambda^k(\T, \Delta) @>d>>
 \P_1^-\Lambda^{k+1}(\T) \times \0\Lambda^{k+1}(\T, \Delta)
\end{CD}
\end{equation*}
commutes.  We will also show that the operators $W^k$ and $B_f^k$ can
be extended to bounded operators in $L^2$.  Furthermore, the
polynomial preserving property of the bubble transform can simply be
expressed by the fact that
\begin{equation}
  \label{space-preserve}
B_f^k (\P_r\Lambda^k(\T)) \subset \0\P_r\Lambda^k(\T_f)
\quad \text{and }
B_f^k (\P_r^-\Lambda^k(\T)) \subset \0\P_r^-\Lambda^k(\T_f)
\end{equation}
for all $f \in \Delta(\T)$ and $r\ge 1$, where $\0\P_r\Lambda^k(\T_f)
= \P_r\Lambda^k(\T) \cap \0 \Lambda^k(\T_f)$ and with corresponding
definition for the $\P_r^-$-spaces.

The individual bubbles, $B_f^k u$, introduced above, will not
correspond to the bubbles constructed in \cite{bubble-II}.  However, a
key part of the analysis given in \cite{bubble-II} is the study of a
family of trace preserving operators, $C_m^k :\Lambda^k(\T) \to
\Lambda^k(\T)$, where $0 \le m \le n-1$. These operators are
explicitly given in formula \eqref{Cm-rewritten} below.  A key
property of these operators is that if $f \in \Delta_m(\T)$, where
$m\ge k$, then
\begin{equation}
  \label{trace-preserve}
\tr_f C_m^k u = \tr_f u,
\end{equation}
cf. \cite[Lemma 2.2]{bubble-II}. Here $\tr_f$ is the trace operator.
Furthermore, these operators commute with the exterior derivative and
they preserve piecewise smoothness and the standard spaces of
piecewise polynomials, cf. \cite[Proposition 7.1]{bubble-II}.  The
latter property may seem surprising since formula \eqref{Cm-rewritten}
contains rational mesh dependent functions.  It was also shown how the
global functions $C_m^ku$ can be decomposed into a sum of local
bubbles, but these bubbles were rational functions leading to an
apparent defect of the theory of \cite{bubble-II}. However, in the
analysis below, where we will overcome the problems just mentioned,
the operators $C_m^k$ will still play an essential part. In fact, the
new decomposition of $u \in \Lambda^k(\T)$ will be initialized by
expressing $u$ as
\[
u = (u - C_{n-1}^k u) + C_{n-1}^k u.
\]
It follows from \eqref{trace-preserve} above that $u - C_{n-1}^k u$ is
a global piecewise smooth function which has zero trace on elements of
$\Delta_{n-1}(\T)$.  As a consequence, we can decompose this function
as $u - C_{n-1}^k u = \sum_{f \in \Delta_n(\T)} B_f^k u$, where each
local bubble, $B_f^k u := \tr_f (u - C_{n-1}^k u )$, is piecewise
smooth, and with support in $f = \Omega_f$, for $f \in \Delta_n(\T)$.
Furthermore, the operators $\{B_f^k \}_{f \in \Delta_n(\T)}$ will
commute with the exterior derivative and preserve the piecewise
polynomial spaces.  We can conclude that we have decomposed $u$ into
\begin{equation}
  \label{pre-decomp}
  u = \sum_{f \in \Delta_n(\T)} B_f^k u+ C_{n-1}^k u, \quad B_f^k u
  =  \tr_f (u - C_{n-1}^k u ),
\end{equation}
where the first part is a sum of local bubbles. The task for the rest
of the construction will be to show that the second part, the function
$C_{n-1}^k u$, also admits such a decomposition.  We will achieve this
by focusing on the differences $C_m^k - C_{m-1}^k$, and from an
alternative representation of this operator, we will obtain the
desired decomposition into local bubbles which preserve piecewise
smoothness.  This will also lead to an alternative proof of the
property that the operators $C_m^k$ preserve piecewise smoothness.  In
fact, even in the case of zero-forms, or scalar valued functions, this
development deviates from the earlier approaches used for example in
\cite{bubble-I} or \cite{additive-schwarz}, cf. the discussion given
in Section~\ref{sec:scalar} below.  To prepare for the corresponding
procedure in the general case of $k$-forms, we construct various local
functions only depending on the mesh $\T$.  The delicate recursive
construction of these mesh functions, given in
Section~\ref{sec:mesh-functions}, represents a completely new approach
as compared to the construction performed in \cite{bubble-II}. The
results of this section, which are derived by utilizing chains with
values in spaces of trimmed linear differential forms, can be seen as
the main new tool used to obtain the improved results of this paper.

The present paper is organized as follows. In the next section, we
list some assumptions we will make, recall some standard notation and
properties of differential forms and simplicial complexes, and define
some of the basic operators that we will use in the construction of
our decomposition. We end Section~\ref{sec:prelims} with an outline of
the construction we will present in the remainder of the paper, and
state the main results, cf. Theorem~\ref{thm:main}.  To motivate the
general theory, we discuss the decomposition in the case of
scalar-valued functions in Section~\ref{sec:scalar}, while
Section~\ref{sec:mesh-functions} is devoted to an analysis of the
local structure of the mesh. The result of this local analysis is used
to define families of order reduction operators in
Section~\ref{sec:order-reduct}, and these operators are then used to
define the local bubbles $B_f^k$, cf. Section~\ref{sec:Kmgk}.  In
Section~\ref{sec:fund-ident}, we focus on the operator $C_m^k -
C_{m-1}^k$, and show that this operator admits a decomposition into
local bubbles with desired properties.  By utilizing a telescoping
series argument,
%the identity $\sum_{m=1}^{n-1} (C_m^k - C_{m-1}^k) = C_{n-1}^k - C_0^k$, 
we will then obtain a similar decomposition for the operator
$C_{n-1}^k$.  In Section~\ref{sec:bounds}, we show that all the
operators of the decomposition \eqref{main-decomp} are bounded in
$L^2$. In the final section, we give a more detailed description of
the connection of this paper to previous work in the area (e.g.,
\cite{babuska-suri}, \cite{bubble-I}, \cite{K-M-R}, \cite{munoz-sola},
\cite{additive-schwarz}).

\section{Preliminaries}
\label{sec:prelims} 
In this section, we introduce the basic assumptions, notation, and
concepts that will be used in the construction below, and give an
outline of the complete theory.

\subsection{Assumptions}
\label{sec:assumptions}
We assume that $\Omega \subset \R^n$ is a bounded polyhedral Lipschitz
domain which is partitioned into a finite set of $n$ simplices,
$\Delta_n(\T)$.  The simplicial mesh $\T$ is assumed to be a
simplicial decomposition of $\Omega$, i.e., the union of the simplices
in $\Delta_n(\T)$ is the closure of $\Omega$ and the intersection of
any two is either empty or a common subsimplex of each.  The set of
all such simplices of dimension $m$ is denoted $\Delta_m(\T)$, while
$\Delta(\T) = \bigcup_{0 \le m \le n} \Delta_m(\T)$.  Furthermore,
below we will frequently write $\Delta$ instead of $\Delta(\T)$ and
$\Delta_m$ instead of $\Delta_m(\T)$.}

\subsection{Notation}
\label{sec:notation}
The space of smooth differential $k$-forms on $\Omega$ will be denoted
$\Lambda^k(\Omega)$. More precisely, for each $x \in \Omega$, $u_x \in
\Alt^k$, where $\Alt^k$ is the space of alternating $k$ -linear maps
$\R^n \times \cdots \times \R^n \to \R$.  We recall that a projection,
$\skw$, mapping $k$-linear forms to $\Alt^k$, is given by
\[
(\skw u)(v_1, \ldots , v_k) = \frac{1}{k!}
\sum_{\sigma} \sign(\sigma) u(v_{\sigma(1)}, \ldots ,v_{\sigma(k)}),
\]
where the sum is over all permutations of $\{1, \ldots , k\}$. In our
discussion below, we will encounter both the tensor product,
$\otimes$, and the wedge product, $\wedge$, of differential forms.  If
$u_1 \in \Lambda^j(\Omega)$ and $u_2 \in \Lambda^k(\Omega)$, then $u_1
\wedge u_2 \in \Lambda^{j+k}(\Omega)$, and the identity
\begin{equation}
  \label{tensor-to-wedge}
  u_1 \wedge u_2 = %  {k+j \choose k} 
  \binom{k+j}{k} \skw (u_1 \otimes u_2)
\end{equation}
holds. The exterior derivative
$d = d_k : \Lambda^k(\Omega) \to \Lambda^{k+1}(\Omega)$
is given by
  \begin{equation*}
    d u_x(v_1, \ldots, v_{k+1})
    = \sum_{j=1}^{k+1} \partial_{v_j} u_x(v_1, \ldots, \hat v_j, \ldots v_{k+1}),
  \end{equation*}
where the hat is used to indicate a supressed argument.  If $F: \M \to
\M^\prime$ is a smooth map between manifolds, then the corresponding
pullback $F^*$ is a map from $\Lambda^k(\M^\prime) \to \Lambda^k(\M)$
given by
  \begin{equation*}
    (F^*u)_x (v_1, \ldots, v_k) = u_{F(x)}(DF_x(v_1), \ldots, DF_x(v_k)).
  \end{equation*}
In most of the applications in this paper, $\M$ will be domain in
$\R^n$, $\M^\prime$ a domain in $\R^{m+1}$, and $DF_x$ the Jacobian
matrix of the map $F$.  When $\M$ is a submanifold of $\M^\prime$,
then the pullback by the inclusion map, $\Lambda^k(\M^\prime) \to
\Lambda^k(\M)$, is the trace map $\tr_{\M}$.  The notation
$H\Lambda^k(\Omega)$ refers to the Sobolev space
\begin{equation*}
H \Lambda^k(\Omega) = \{u \in L^2 \Lambda^k(\Omega) :
d u \in L^2 \Lambda^{k+1}(\Omega)\},
\end{equation*}
where $u \in L^2 \Lambda^k(\Omega)$, if for all vectors $v_1, \ldots ,
v_k \in \R^n$, the function $u_x(v_1, \ldots, v_k) \in L^2(\Omega)$ as
a function of $x$.  In three dimensions, the spaces $H
\Lambda^k(\Omega)$, $k=0, \ldots,3$ are the spaces $H^1(\Omega),
H(\curl,\Omega), H(\div,\Omega), L^2(\Omega)$.  The corresponding
space of piecewise smooth $k$-forms with single valued traces with
respect to $\T$, $\Lambda^k(\T)$, will then be a subspace of
$H\Lambda^k(\Omega)$. Furthermore, the set of piecewise polynomial
$k$-forms of order $r$, $\P_r\Lambda^k(\T)$, and the corresponding
space of trimmed piecewise polynomials, $\P_r^-\Lambda^k(\T)$, will
satisfy
\[
\P_{r-1}\Lambda^k(\T) \subset \P_r^-\Lambda^k(\T)
\subset \P_r\Lambda^k(\T) \subset \Lambda^k(\T) 
\subset H\Lambda^k(\Omega).
\]
In three dimensions, the spaces $\P_r^-\Lambda^k(\T)$ are the
generalizations by N\'ed\'elec \cite{nedelec-1980} of the
two-dimensional Raviart-Thomas elements and the spaces
$\P_r\Lambda^k(\T)$ are the generalizations by N\'ed\'elec
\cite{nedelec-1986} of the two-dimensionalal Brezzi-Douglas-Marini
elements.

If $f \in \Delta_m(\T)$, then $f$ corresponds to an ordered subset of
the vertices, $\Delta_0(\T)$.  We assume that all the vertices are
numbered by a set of integers $\I = \{ 0, 1, \ldots ,N(\T) \}$ such
that
\[
\Delta_0(\T) = \{ x_i \, : \, i \in \I \, \}.
\]
Any $f \in \Delta_m(\T)$ is of the form $ f= [x_{j_0}, x_{j_1}, \ldots
  x_{j_m}]$, where $j_0 < j_1 < \ldots < j_m$ and $I(f) := \{j_0,j_1,
\ldots, j_m\} \subset \I$.  Here we have used the notation $[ \cdot,
  \cdots ,\cdot ] $ to denote convex combinations.  Furthermore, the
statement $g \in \Delta(f)$ means that $g$ is a subsimplex of $f$, and
with increasingly ordered vertices.  We define the map $\sigma_f :
\Delta_0(f) \to \{0, \ldots ,m\}$ by
\[
\sigma_f(x_{j_i}) = i .
\]
In other words, $\sigma_f(y)$ gives the internal numbering of a vertex
$y$ relative to the simplex $f$.  The number of vertices in $f$ is
denoted $|f|$, i.e., $|f| = m+1$ if $f \in \Delta_m(\T)$.  If $e,f \in
\Delta(\T)$, with a disjoint set of vertices, and such that the union
of the vertices defines a simplex, then although $e$ and $f$ are
increasingly ordered, the simplex $[e,f]$ will not necessarily be
increasingly ordered.  We then denote by $\<e,f\>$ the increasingly
ordered simplex composed of the vertices of $e$ and $f$.  The set
$\bar \Delta(f)$ contains the emptyset, $\emptyset$, in addition to
the elements of $\Delta(f)$, and $\emptyset $ is the single element of
$\Delta_{-1}(\T)$.  Since the ordering of a simplex $ f= [x_{j_0},
  x_{j_1}, \ldots x_{j_m}] \in \Delta_m(\T)$ is inherited from the
global numbering of the vertices, the various simplices are not
necessarily equally oriented.  If $m=n$, we define the orientation of
$f$, $o(f)$, by
\begin{equation*}
%  \label{orient}
o(f) = \sign \det \Big(x_{j_1}- x_{j_0}, \ldots , x_{j_m}- x_{j_0} \Big),
\end{equation*}
i.e., it is the sign of the determinant of a nonsingular $n \times n$
matrix. 

In the analysis that follows, we will make use of some concepts from
simplicial complexes, e.g., see \cite{spanier}.  The $k$-chains
defined by the mesh $\T$ is a vector space consisting of linear
combinations of the form $\sum_{f \in \Delta_k} c_f f$. We will let
$\C_k$ denote the corresponding space of vector representations, i.e.,
if $c \in \C_k$ then $c = \{c_f\}_{f \in \Delta_k}$, $c_f \in \R$. In
fact, in the development of the theory below, we will consider
$k$-chains with values in a finite dimensional vector spaces $X$, by
which we mean spaces of the form $\C_k\otimes X$. In particular, $X$
will be a subspace of $\P_1^-\Lambda^k(\T)$. The {\it boundary
  operator} is a chain map,
  \[
  f \mapsto \sum_{i \in I(f)} (-1)^{\sigma_f(x_i)} f(\hat x_i),
  \]
where the hat is used to indicate a suppressed argument. Using vector
representations, the corresponding operator $\partial_k : \C_k \otimes
X \to \C_{k-1}\otimes X$ can be expressed as
  \[
(\partial_k c)_{f} = \sum_{i \in \I} c_{[x_i,f]} \equiv \sum_{i \in \I} 
  (-1)^{\sigma_{\<x_i,f\>}(x_i)} c_{\<x_i,f\>}, \quad f \in \Delta_{k-1}(\T),\,
  1 \le k \le n,
\]
where $c_{\<x_i,e\>} = 0$ if $\<x_i,e\>$ is not an element of
$\Delta_k(\T)$.  We can also identify $\C_{-1}\otimes X$ as $X$,
corresponding to the single element $\emptyset$ of $\Delta_{-1}(\T)$,
and $\partial_0 : \C_0 \otimes X \to X$ by $\partial_0 c = \sum_{i \in
  \I} c_{x_i}$.

The corresponding {\it coboundary operators} are cochain maps
$\delta_k : \C_k \otimes X \to \C_{k+1}\otimes X$ given by
\[
(\delta_{k} c)_f = \sum_{i \in I(f)}
  (-1)^{\sigma_f(x_i)} c_{f(\hat x_i)},
\quad f \in \Delta_{k+1}(\T), 
\]
for $-1 \le k \le n-1$.  If $X$ and $Y$ are finite dimensional vector
spaces, then these definitions lead to the identity
\begin{equation}
  \label{sum-by-parts}
  \sum_{f \in \Delta_{k-1}(\T)} (\partial_k c)_f \otimes \tilde c_f
  = \sum_{ f \in \Delta_{k}(\T)} c_f \otimes  (\delta_{k-1}\tilde c)_f,
  \quad c \in \C_{k} \otimes X,\, \tilde c \in \C_{k-1} \otimes Y,
\end{equation}
i.e., $\partial$ is the adjoint of $\delta$ with respect to the inner
product on $\C_k$. Furthermore, the operators $\partial$ and $\delta$
will satisfy the complex properties $\partial^2 = \delta^2 = 0$.

Associated to any vertex $x_i \in \Delta_0(\T)$, we denote by
$\lambda_i(x)$ the nonnegative piecewise linear function equal to one
at $x_i$ and zero at all other vertices.  More generally, if $f $ is a
ordered subset of $\Delta_0(\T)$, but not necessarily increasingly
ordered, then $\phi_f$ will denote the Whitney form associated to $f$.
More precisely, if $f = [x_{j_0}, x_{j_1}, \ldots, x_{j_m}]$, then
$\phi_f$ is given by
\[
\phi_f = m!\sum_{i =0}^m (-1)^{i} \lambda_{j_i} d\lambda_{j_0} \wedge 
\ldots \wedge \widehat{d\lambda_{j_i} } \wedge
\ldots \wedge d\lambda_{j_m}.
\]
In particular, for $f = [x_i]$, $\phi_f = \lambda_i$.  If $m>0$, then
$\int_{g} \phi_f$ is plus or minus one if $g = f$, and zero if $g \in
\Delta_m(\T)$, $g \neq f$.  The forms $\{ \phi_f \}_{f \in
  \Delta_m(\T)}$ span the space $\P_1^-\Lambda^m(\T)$ and they are
local with support in $\Omega_f$.  It can also be easily checked that
\begin{equation}
  \label{d-phi}
  d\phi_f = (m+1)! d\lambda_{j_0} \wedge \ldots \wedge d\lambda_{j_m}
  = \sum_{j \in \I}\phi_{[x_j,f]} = (\partial_{m+1} \phi)_f,
\end{equation}
where $\phi_{[x_j,f]} = 0$ if $[x_j,f]$ does not correspond to a
subsimplex of $\T$.  Here the operator $\partial_{m+1}$ has the
interpretation given above, where $\{ \phi_f \}$ for $f \in
\Delta_{m+1}(\T)$ should be seen as an element of $\C_{m+1} \otimes
\P_1\Lambda^{m+1}(\T)$.  Furthermore, if $u \in \P_1^-\Lambda^m(\T)$
is expanded, such that $ u = \sum_{f \in \Delta_m(\T)} c_f \phi_f$,
then
\begin{equation}
  \label{span-d}
  du =  \sum_{f \in \Delta_m(\T)} c_f (\partial_{m+1} \phi)_f
  =  \sum_{f \in \Delta_{m+1}(\T)} (\delta_m c)_f \phi_f.
\end{equation}
It is also a consequence of the definition of $\phi_f$ and
\eqref{d-phi} that
\begin{equation}\label{phi-xf}
  \phi_{[x_i,f]} = \Big(\lambda_i d - m d\lambda_i \wedge\Big)\phi_f,
  \quad f \in \Delta_{m-1}(\T).
\end{equation}
If $0 \le m \le n$, then associated to the simplex $f = [x_{j_0},
  x_{j_1}, \ldots, x_{j_m}]$, we define the standard simplex $\S_f$ by
\[
\S_f = \big\{\, \lambda =  (\lambda_{j_0}, \lambda_{j_1}, \ldots, \lambda_{j_m})
  \in \R^{m+1} \, : \,
\sum_{i =0}^{m} \lambda_{j_i} = 1, \quad \lambda_{j_i} \ge 0 \,  \big\},
\]
while $\S_f^c$  is the set of all convex combinations
between $\S_f$ and the origin, i.e., $\S_f^c = [0,\S_f]$. Alternatively,
\[
\S_f^c = \big\{\, \lambda =  (\lambda_{j_0}, \lambda_{j_1}, \ldots, \lambda_{j_m})
\in \R^{m+1} \, : \,
\sum_{i =0}^{m} \lambda_{j_i} \le  1, \quad \lambda_{j_i} \ge 0 \,  \big\}.
\]
If $f= [x_{j_0}, x_{j_1}, \ldots x_{j_m}] \in \Delta_m(\T)$, then $L_f
: \Omega \to \S_f^c$ will be the map
\[
L_f(x) = \big(\lambda_{j_0}(x), \lambda_{j_1}(x), \ldots , \lambda_{j_m}(x)\big).
\]
If $f \in \Delta_0(\T)$, $\S_f^c = [0,1]$, while in the special case
$f = \emptyset$, we let $\S_{\emptyset} = \S_{\emptyset}^c = \{0\}$
and $L_f(\Omega)= 0$.

\begin{figure}[htb]
\setlength{\unitlength}{0.35cm}
\centering
\begin{picture}(30,15)
\linethickness{2pt}
\put(5,0){\line(0,1){10}}
\linethickness{1pt}
\put(5,0){\line(2,1){10}}
\put(5,10){\line(2,-1){10}}
\put(5,0){\line(-3,4){7.5}}
\put(5,10){\line(-1,-1){4.4}}
\put(5,10){\line(-1,0){7.5}}
\put(20,0){\vector(0,1){10}}
\put(20,0){\vector(1,0){10}}
\put(22.4,3.7){\line(1,-1){1}}
\put(30.5,0){$\lambda_0$}
\put(19.8,10.5){$\lambda_1$}
\linethickness{2pt}
\put(20,10){\line(1,-1){10}}
\linethickness{1pt}
\put(6.7,3.5){$x$}
\put(5.1,5){$f$}
\put(4.8,-.6){$x_0$}
\put(4.8,10.5){$x_1$}
\put(7.5,5.5){$\Omega_f$}
\put(2.5,5.5){$\Omega_f$}
\put(20.5,2){$(\lambda_0(x), \lambda_1(x))$}
\put(21,8){$(0,\lambda_1(y))$}
\put(25.5,4.5){$\S_f$}
\put(21.5,4.5){$\S_f^c$}
\put(15,7){$L_f$}
\put(6.7,2.7){$\bullet$}
\put(22.5,2.8){$\bullet$}
\put(6.7,2.7){$\bullet$}
\put(22.5,2.8){$\bullet$}
\put(0.5,7.4){$\bullet$}
\put(19.7,7.9){$\bullet$}
\put(0.5,8.5){$y$}
\qbezier(7,3)(15,9)(23,3)
\qbezier(0.6,7.6)(15,12)(20,8)
\end{picture}
\vskip.1in
\caption{The map $x \mapsto L_f(x)$ and $y \mapsto L_f(y)$ for $n=2$,
  $m=1$, and $f=[x_0,x_1]$.}
\label{fig:map}
\end{figure}

In the construction below, we will frequently use the pullback $L_f^*$
mapping $\Lambda^k(\S_f^c)$ to $\Lambda^k(\T)$. For $m=1$, $f =
[x_0,x_1]$, the map $L_f(x) = (\lambda_0(x), \lambda_1(x))$, and the
one-form $w_{\lambda} = w_0(\lambda_0,\lambda_1) d \lambda_0 +
w_1(\lambda_0,\lambda_1) d \lambda_1$ defined on $\S_f^c$,
  \begin{multline*}
      (L_f^* w)_x(v) = w(L_f(x))(DL_f(v))
      \\
  = w_0(\lambda_0(x),\lambda_1(x)) (\nabla \lambda_0(x) \cdot v)
    +  w_1(\lambda_0(x),\lambda_1(x)) (\nabla \lambda_1(x) \cdot v).
  \end{multline*}
The map $L_f^*$ takes smooth forms on $\S_f^c$ to piecewise smooth
forms on $\Omega$, and also polynomial forms to piecewise polynomial
forms.  More precisely, if $g \in \Delta(f)$, then
\begin{equation}\label{L-star-prop}
L_g^* \Big(\P\Lambda^k(\S_f^c)\Big) \subset \P\Lambda^k(\T),
\end{equation}
where $\P$ is either $\P_r$ or $\P_r^-$.  Another key tool we will
utilize below is the piecewise linear function $\rho_f$, defined by
\[
\rho_f(x) = 1 - \sum_{i \in I(f)} \lambda_i(x),
\]
which can be seen as a distance function between $f \in \bar
\Delta(\T)$ and $x \in \Omega$.  Note that $0 \le \rho_f(x) \le 1$ and
$\rho_f \equiv 1$ if $f = \emptyset$. Alternatively, we have $\rho_f =
L_f^* b$, where $b = b_f : \S_f^c \to \R$ is the distance to the
origin, i.e., $b(\lambda) = 1 - \sum_{i =0}^{m} \lambda_{j_i}$.

\subsection{The macroelements}
\label{sec:macroelements}
We recall that for any $f \in \Delta(\T)$, we denote by $\Omega_f$ the
union of the simplices in $\Delta_n(\T)$ containing $f$ as a
subsimplex, while $\T_f$ is the restriction of the mesh $\T$ to
$\Omega_f$.  We refer to $\Omega_f$ as the macroelement of $f$, or
alternatively, as the star of $f$. The domain $\Omega_f$ is
contractible with respect to any $x \in f$.  If $f \in
\Delta_{m-1}(\T)$ and $T \in \Delta_n(\T_f)$, we let $f^*(T) \in
\Delta_{n-m}(T)$ be the subsimplex of $T$ opposite $f$.  The link of
$f$, denoted by $f^*$, is given as $f^* = \bigcup_{T \in
  \Delta_n(\T_f)} f^*(T)$ (e.g., see \cite{daverman-sher}).  More
precisely,
\begin{equation*}
  f^* = \{x \in \Omega_f: \lambda_i(x) =0, i \in I(f)\}.
  \end{equation*}
The link $f^*$ can be viewed as an $n-m$ dimensional oriented manifold
composed of the simplices $f^*(T)$, where $f^*(T)$ has an orientation,
$o(f^*(T),T)$, induced by the $n$-simplex $T$. More precisely, if $f =
[x_{j_0},\ldots , x_{j_{m-1}}]$, then
\begin{equation*}
o(f^*(T),T) = o(T) \prod_{i = 0}^{m-1} (-1)^{\sigma_{T_i}(x_{j_i})},
\end{equation*}
where $T_i = T(\hat x_{j_0}, \ldots , \hat x_{j_{i-1}}) \in
\Delta_{n-i}(T)$.  Any $x \in \Omega_f$ can be written uniquely as a
convex combination of the points $x_i, \, i \in I(f)$, and points in
$f^*$, i.e., $\Omega_f = [f, f^*]$.  In the special case when $m=n-1$,
the manifold $f^*$ will be reduced to two vertices, or only one close
to the boundary, while in the special case $f = \emptyset$, we have
$\Omega_f = f^* = \Omega$.  Below we will also encounter the extended
macroelement, $\Omega_f^E$, defined by $\Omega_f^E = \cup_{i \in I(f)}
\Omega_{x_i}$.

\vskip.25in
\setlength{\unitlength}{0.30cm}
\begin{figure}[ht]
 \vspace*{.25in}\begin{center}
\begin{picture}(30,0)
\put(0,0){\line(1,0){15}}
\put(0,0){\line(1,1){5}}
\put(0,0){\line(1,-1){5}}
\put(5,5){\line(1,0){5}}
\put(5,-5){\line(1,0){5}}
\put(15,0){\line(-1,1){5}}
\put(15,0){\line(-1,-1){5}}
\put(7.5,0){\line(-1,2){2.5}}
\put(7.5,0){\line(1,2){2.5}}
\put(7.5,0){\line(-1,-2){2.5}}
\put(7.5,0){\line(1,-2){2.5}}
\put(20,0){\line(1,-1){5}}
\put(20,0){\line(1,1){5}}
\put(20,0){\line(1,0){10}}
\put(30,0){\line(-1,1){5}}
\put(30,0){\line(-1,-1){5}}
\put(25,-0.8){$f$}
\put(24.8,5.2){$f^*$}
\put(25,-5.5){$f^*$}
\put(7.2,-1){$f$}
\put(2.4,-4){$f^*$}
\put(2.4,3.5){$f^*$}
\put(12.2,-3.7){$f^*$}
\put(12.4,3.4){$f^*$}
\put(7.5,5.2){$f^*$}
\put(7.5,-5.7){$f^*$}
\end{picture}
\vskip.8in
\caption{\qquad \qquad  2D Vertex macroelement  \qquad \qquad 
\qquad \qquad \qquad 2D Edge macroelement
\newline
When $f$ is a vertex, $f^*$ is the boundary of the vertex macroelement.
When $f$ is an edge, $f^*$ is the two remaining vertices of the macroelement.}
\label{fig:vertex-edge-macroelement}
\end{center}
\end{figure}

\begin{center}
\begin{figure}[ht]
\hspace*{8em}
\begin{tikzpicture}[scale = .3]
\draw [dashed] (-4,2.5) -- (4, 0) -- (8,5.6);
\draw [ ultra thick](4,8.5) -- (-4,2.5);
\draw [ ultra thick](4,8.5) -- (8,5.5);
\draw [ ultra thick] (4,8.5) -- (0,-5.6);
\draw [ ultra thick] (4,-8.5) -- (0,-5.6);
\draw [ ultra thick] (4,-8.5) -- (12,-2);
\draw [ ultra thick] (4,8.5) -- (12,-2);
\draw [dashed]  (-4,2.5) -- (8,5.5) ;
\draw [dashed] (4,0) -- (4,8.5);
\draw [dashed] (4,0) -- (4,-8.5);
\draw [dashed] (4,0) -- (0,-5.6);
\draw [ultra thick] (8,5.5) -- (12,-2);
\draw [dashed] (4,0) -- (12,-2);
\draw [ultra thick] (-4,2.5) -- (0,-5.6);
\draw  [ultra thick] (0,-5.6) -- (12,-2);
\node [right] at (8,5.6) {$x_2$};
\node [left] at (-4,2.5) {$x_3$};
\node [left] at (0,-5.6) {$x_4$};
\node [right] at (12,-2) {$x_5$};
\node[below] at (4.2,0) {$x_0$};
\node [above] at (3.4,8.6) {$x_1$};
\node [below] at (3.4,-8.6) {$x_6$};
\end{tikzpicture}
\caption{The vertex macroelement $\Omega_f \subset \R^3$, where $f = [x_0]$
  and $f^*$ is the union of the faces containing three of the other vertices
  (excluding $x_0$)}
\label{fig:3d-vertex-macroelement}
\end{figure}
\end{center}

\begin{center}
\begin{figure}[!ht]
\vspace*{.25in}
\hspace*{8em}
\begin{tikzpicture}[scale = .3]
\draw [dashed] (-4,2.5) -- (4, 0) -- (8,5.6);
\draw (4,8.5) -- (-4,2.5);
\draw (4,8.5) -- (8,5.5);
\draw (4,8.5) -- (0,-5.6);
\draw (4,8.5) -- (12,-2);
\draw [dashed] [ ultra thick] (-4,2.5) -- (8,5.5) ;
\draw [ultra thick] (4,0) -- (4,8.5);
\draw [dashed] (4,0) -- (0,-5.6);
\draw [ultra thick] (8,5.5) -- (12,-2);
\draw [dashed] (4,0) -- (12,-2);
\draw [ultra thick] (-4,2.5) -- (0,-5.6);
\draw [ultra thick] (0,-5.6) -- (12,-2);
\node [right] at (8,5.6) {$x_2$};
\node [left] at (-4,2.5) {$x_3$};
\node [left] at (0,-5.6) {$x_4$};
\node [right] at (12,-2) {$x_5$};
\node[below] at (4.2,0) {$x_0$};
\node [above] at (3.4,8.6) {$x_1$};
\end{tikzpicture}
\caption{The edge macroelement $\Omega_f \subset \R^3$, where $f =
  [x_0,x_1]$ and $f^*$ is the closed curve connecting the vertices
  $x_2,x_3,x_4$, and $x_5$.}
\label{fig:3d-edge-macroelement}
\end{figure}
\end{center}

\begin{center}
\begin{figure}[!ht]
\vspace*{-.25in}
\hspace*{8em}
\begin{tikzpicture}[scale = .3]
\draw (4,8.5) -- (-4,2.5);
\draw  [ultra thick] (4,8.5) -- (8,5.5);
\draw [ultra thick] (4,8.5) -- (0,-5.6);
\draw (4,8.5) -- (12,-2);
\draw [dashed]  (-4,2.5) -- (8,5.5) ;
\draw [ultra thick] [dashed] (8,5.6) -- (0,-5.6);
\draw  (8,5.5) -- (12,-2);
\draw  (-4,2.5) -- (0,-5.6);
\draw  (0,-5.6) -- (12,-2);
\node [right] at (8,5.6) {$x_2$};
\node [left] at (-4,2.5) {$x_3$};
\node [left] at (0,-5.6) {$x_4$};
\node [right] at (12,-2) {$x_5$};
\node [above] at (3.4,8.6) {$x_1$};
\end{tikzpicture}
\caption{The face macroelement $\Omega_f \subset \R^3$, where
  $f = [x_4,,x_2,x_1]$ and $f^*$ is the vertices $x_3$, and $x_5$.}
\label{fig:3d-face-macroelement}
\end{figure}
\end{center}

We define $\C_k(f^*)$ as the space of vector representations of
$k$-chains defined on the manifold $f^*$, i.e., if $c \in \C_k(f^*)$
then $c = \{c_f\}_{f \in \Delta_k(f^*)}$.  The corresponding boundary
and coboundary operators, $\partial_k(f^*)$ and $\delta_k(f^*)$, are
defined as the operators $\partial_k$ and $\delta_k$ above, but by
restricting to the manifold $f^*$.  In the construction performed
later in this paper, we will utilize the fact that for any $f \in
\Delta_{m-1}(\T)$, $ 1 \le m \le n-1$, the cochain complex
\begin{equation}\label{f*-complex}
\begin{CD}
  \R  @>>> \C_0(f^*)  @>\delta>>   \C_1(f^*) @>\delta>>
  \cdots @>\delta>>  \C_{n-m}(f^*)   @>\delta>>  \R
\end{CD}
\end{equation}
is exact, where $\delta = \delta(f^*)$, and where the special operator
$\delta_{n-m}(f^*) :\ C_{n-m}(f^*) \to \R$ will be defined below.
This exactness is a consequence of the corresponding property for the
trimmed linear forms restricted to $f^*$.  Since $f^*$ is a piecewise
flat submanifold of the boundary of $\Omega_f$, defined by the mesh
$\T$, we can consider the trace spaces $\P_1^- \Lambda^k(f^*)$,
defined as $\tr_{f^*}\P_1^- \Lambda^k(\T)$.  If $f$ is an interior
simplex, such that $f^*$ is a manifold without a boundary, the complex
\begin{equation}\label{trimmed-complex}
\begin{CD}
\R  @>>> \P_1^- \Lambda^0(f^*)  @>d>>   \P_1^- \Lambda^1(f^*)   @>d>>  \cdots 
\P_1^- \Lambda^{n-m}(f^*)    @> >>  \R
\end{CD} 
\end{equation}
is exact, where the final arrow represents the integral over $f^*$.
If $f$ is a boundary simplex, the manifold $f^*$ may have a boundary,
and in this case the last arrow in \eqref{trimmed-complex} is
redundant.  By expanding the elements of $\P_1^- \Lambda^{k}(f^*)$ in
the basis functions, and using the identity \eqref{span-d}, we obtain
the equivalent complex \eqref{f*-complex}.  More specifically, the map
$l_k: \P_1^- \Lambda^{k}(f^*) \to \C_k(f^*)$ given by $\sum_{e \in
  \Delta_k(f^*)} c_e \phi_e \mapsto c_e$ gives the commuting relation
$\delta_k(f^*) \circ l_k = l_{k+1} \circ d$. As a consequence, the
exactness of the complex \eqref{f*-complex} follows from the exactness
of \eqref{trimmed-complex}.  The special operator $\delta_{n-m}(f^*)$
is defined by
\begin{equation}
\label{spec-op}
\delta_{n-m}(f^*)c = \sum_{e \in \Delta_{n-m}(f^*)} o(e,T_e) c_{e},
\quad T_e = \<e,f\>.
\end{equation}
Note that this operator is always well-defined, but that the last
arrow in \eqref{f*-complex} may be redundant if $f$ intersects the
boundary of $\Omega$. Since $\partial$ is the adjoint of $\delta$ with
respect to the inner product of $\C(f^*)$, the following result is a
consequence of the exactness of the complex \eqref{f*-complex}.

\begin{lem}\label{lem:exact-f*}
Let $f \in \Delta_{m-1}(\T)$ and $0 \le k < n-m$. If $c \in \C_k(f^*)$
satisfies $\partial_k(f^*) c = 0$, then there is a $\tilde c \in
\C_{k+1}(f^*)$ such that $c = \partial_{k+1}(f^*) \tilde
c$. Furthermore, $\tilde c$ is uniquely determined if we require
$\delta_{k+1}(f^*) \tilde c= 0$.
\end{lem}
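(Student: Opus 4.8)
The plan is to treat this as a statement in finite-dimensional linear algebra: both existence and uniqueness will follow from the exactness of the cochain complex \eqref{f*-complex} together with the fact, recorded in \eqref{sum-by-parts}, that on each space $\C_j(f^*)$ the boundary operator is the adjoint of the coboundary. Throughout I would abbreviate $\partial_j = \partial_j(f^*)$ and $\delta_j = \delta_j(f^*)$, so that \eqref{sum-by-parts} (with $X=Y=\R$) reads $\partial_j = \delta_{j-1}^*$ relative to the Euclidean inner product on chains, and I would use repeatedly the two elementary identities valid for a linear map $A$ between finite-dimensional inner product spaces, namely $\ker A^* = (\range A)^\perp$ and $\range A^* = (\ker A)^\perp$.

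For the existence claim I would fix $0 \le k < n-m$ and assume $\partial_k c = 0$. Since $\partial_k = \delta_{k-1}^*$, the first identity gives $\ker \partial_k = (\range \delta_{k-1})^\perp$, while $\range\partial_{k+1} = \range\delta_k^* = (\ker\delta_k)^\perp$. Exactness of \eqref{f*-complex} at the node $\C_k(f^*)$ is the statement $\range\delta_{k-1} = \ker\delta_k$, where for $k=0$ the left-hand side is the image of the augmentation $\delta_{-1}\colon \R \to \C_0(f^*)$, whose adjoint is exactly $\partial_0$. Taking orthogonal complements then yields $\ker\partial_k = \range\partial_{k+1}$, so $c = \partial_{k+1}\tilde c$ for some $\tilde c \in \C_{k+1}(f^*)$. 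I would stress that this step uses exactness only at the interior nodes $\C_0(f^*),\dots,\C_{n-m-1}(f^*)$ and hence never involves the special top operator $\delta_{n-m}(f^*)$.

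For uniqueness I would exploit the orthogonal splitting $\C_{k+1}(f^*) = \range\delta_k \oplus \ker\partial_{k+1}$, which holds because $\ker\partial_{k+1} = (\range\delta_k)^\perp$. Given any preimage $\tilde c_0$ of $c$, subtracting its $\ker\partial_{k+1}$-component produces a preimage $\tilde c \in \range\delta_k$; since exactness of \eqref{f*-complex} at $\C_{k+1}(f^*)$ gives $\range\delta_k = \ker\delta_{k+1}$, this $\tilde c$ satisfies $\delta_{k+1}\tilde c = 0$. If $\tilde c'$ is a second preimage with $\delta_{k+1}\tilde c' = 0$, then $\tilde c - \tilde c' \in \ker\partial_{k+1} \cap \ker\delta_{k+1} = \ker\partial_{k+1}\cap(\ker\partial_{k+1})^\perp = \{0\}$, which is the desired uniqueness.

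The delicate point, and the only place where the endpoint structure of the complex enters, is the uniqueness step when $k = n-m-1$: there $\delta_{k+1}$ is the special operator $\delta_{n-m}(f^*)$ of \eqref{spec-op}, so the exactness $\ker\delta_{n-m} = \range\delta_{n-m-1}$ at the top node must be justified. For an interior $f$, where $f^*$ is a closed manifold, this is part of the stated exactness of \eqref{f*-complex}, and $\ker\partial_{n-m}$ is then the one-dimensional complement that the constraint removes. For a boundary $f$ the last arrow of \eqref{f*-complex} is redundant and $\delta_{n-m-1}$ is already surjective, so $\ker\partial_{n-m} = (\range\delta_{n-m-1})^\perp = \{0\}$ and $\partial_{n-m}$ is injective, whence uniqueness is automatic. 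I expect reconciling these two cases into the single uniform statement of the lemma to be the main obstacle, while the interior-node arguments are routine adjoint bookkeeping.
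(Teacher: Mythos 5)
Your proof is correct and takes essentially the same approach as the paper: the paper disposes of Lemma~\ref{lem:exact-f*} in one line, as a consequence of the adjointness of $\partial$ and $\delta$ (cf.~\eqref{sum-by-parts}) and the exactness of the complex \eqref{f*-complex}, and your orthogonal-complement bookkeeping ($\ker\partial_k=(\range\delta_{k-1})^\perp$, $\range\partial_{k+1}=(\ker\delta_k)^\perp$, and the splitting $\C_{k+1}(f^*)=\range\delta_k\oplus\ker\partial_{k+1}$) simply makes that deduction explicit. Your extra care at the top node $k+1=n-m$, separating interior $f$ (where exactness with the special operator $\delta_{n-m}(f^*)$ of \eqref{spec-op} is invoked) from boundary $f$ (where $\delta_{n-m-1}(f^*)$ is surjective, so $\partial_{n-m}(f^*)$ is injective and uniqueness is automatic), correctly fills in the case the paper leaves implicit in its remark that the last arrow of \eqref{f*-complex} may be redundant.
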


Below we will use  Lemma~\ref{lem:exact-f*} in a
generalized sense, where we consider
$\C_k$ with values in a finite dimensional vector space $X$,
cf. Sections~\ref{sec:scalar} and \ref{sec:mu-functions}.

\subsection{The average operators and their generalizations}
\label{sec:average}
A key tool for our construction below is a family of average
operators, $A_f^k$, where $f \in \Delta$, which
map piecewise smooth $k$-forms on $\Omega_f$ to smooth $k$-forms on
$\S_f^c$. The operators $A_f^k$
will be defined by a function
$G_f : \Omega_f \times \S_f^c \to \Omega_f$,
given by
\[
G_f(y,\lambda) = \sum_{i \in \I(f)} \lambda_i x_i + b(\lambda)y.
\]
Note that if $x \in f$ then, since $b(L_fx) = 0$, we have
\begin{equation}\label{G-L_f-rel}
G_f(y, L_f x) = x, \quad x \in f.
\end{equation}
For each fixed $y \in \Omega_f$, $G_f(y, \lambda)$ is linear with
respect to $\lambda$.  The corresponding derivative with respect
$\lambda$, $DG_f(y, \cdot)$, is therefore an operator mapping tangent
vectors of $\S_f^c$, $TS_f^c = \R^{m+1}$, into $T\Omega_f = \R^n$ which is
independent of $\lambda$.  It is given by
\[
DG_f(y, \cdot) = \sum_{i \in I(f)} (x_i - y) d\lambda_i.
\]
Since for each $y$, the map $G_f(y, \cdot)$ maps $\S_f^c$ to
$\Omega_f$, the corresponding pullback, $G_f(y, \cdot)^*$, maps
$\Lambda^k(\Omega_f)$ to $\Lambda^k(\S_f^c)$.  As a further
consequence, an average of these maps over $\Omega_f$ with respect to
$y$ will also map $\Lambda^k(\Omega_f)$ to $\Lambda^k(\S_f^c)$.  In
order to define the averages we want, we will introduce a family of
piecewise constant $n$-forms, $z_f \in \P_1^-\Lambda^n(\T_f)$, with
the property that $z_f$ has support in $\Omega_f$ and
\begin{equation}\label{average-prop}
\int_{\Omega_f} z_f = \int_{\Omega} z_f =1. 
\end{equation}
The operator $A_f^k$ is then defined for $f \in \Delta_m$ by
\[
A_f^k u  
= \int_{\Omega} G_f(y, \cdot)^*u\wedge z_f.
\] 
Since pullbacks commute with the exterior derivative, so do the
operators $A_f^k$, i.e., $d A_f^k = A_f^{k+1} d$, and from
\eqref{G-L_f-rel}, we obtain that for $f \in \Delta_m$,
\begin{equation*}
  %\label{tr-preserve}
\tr_{f} L_f^*A_f^k  = \tr_f, \quad m\ge k.
\end{equation*}
Of course, the properties of the operators $A_f^k$ will also depend on
the choice of the functions $z_f$.  For $f \in \Delta_n$, these
functions are defined to be
\begin{equation}\label{def-zf-n}
z_f = \frac{\kappa_f}{|\Omega_f|} \vol,
\end{equation}
where $\kappa_f$ is the characteristic function of $\Omega_f$, while
for $ f \in \Delta_m, \, 0 \le m <n$, the functions $z_f$ are defined
recursively by the relation
\begin{equation}\label{def-zf}
  z_f= \frac{1}{|f^*|} \sum_{i \in I(f^*)}
  z_{\<x_i,f\>}.
\end{equation}

We note that it follows by construction that all the functions $z_f$
have support in $\Omega_f$ and satisfy relation \eqref{average-prop}.
Furthermore, it is a consequence of Lemma 2.1 of \cite{bubble-II} that
the operators $A_f^k$ map $\Lambda^k(\T_f)$ to $\Lambda^k(S_f^c)$ and
also map piecewise polynomial forms to polynomial forms. More
precisely, we have for $f \in \Delta$,
\begin{equation}\label{A-prop}
A_f^k (\P_r\Lambda^k(\T_f) ) \subset \P_r\Lambda^k(\S_f^c), \qquad
A_f^k (\P_r^-\Lambda^k(\T_f) ) \subset \P_r^-\Lambda^k(\S_f^c).
\end{equation}

\begin{remark} In the argument given in \cite{bubble-II}, it was assumed  that 
the functions $z_f$ were given by \eqref{def-zf-n} for all
$f$. However, the modification we need to cover the more general
average functions $z_f$ introduced above is straightforward.
\end{remark}

Consider the case of scalar valued functions, i.e., the case $k=0$.
Since all the functions $z_f$ satisfy the identity
\eqref{average-prop}, it follows that for $f \in \Delta$, and $e \in
\Delta_1(f)$,
\[
(\delta A^0 u)_{e,f} :=  \sum_{i \in I(e)} (-1)^{\sigma_e(x_i)} A_{f(\hat x_i)}^0 u
\]
will be zero when $u$ is a constant. Therefore, this expression only
depends on $du$.  Below we will construct a corresponding operator
$R^1 $, mapping one-forms to zero-forms, such that
\begin{equation}\label{RA-delta}
R^1du = (\delta A^0 u)_{e,f}, \quad e \in \Delta_1(f).
\end{equation}
A natural choice seems to be to label this operator by the simplex
pair $(e,f)$.  In fact, this was the choice used in
\cite{bubble-II}. However, for the theory developed in this paper, it
appears more appropriate to use the equivalent label, $(e, f\cap
e^*)$, where $f \cap e^*$ belongs to $e^*$ and vice versa.

More generally, we introduce the sets $\Delta_{j,m} =
\Delta_{j,m}(\T)$ of pairs of simplices, given by
\[
\Delta_{j,m} := \{ (e,f)  \, : \,  f \in \Delta_{m}(\T),
\, e \in \Delta_j(f^*)\, \}
\]
for $-1 \le m \le n$ and $0 \le j < n-m$. For $(e,f) \in
\Delta_{j,m}$, $j \ge 0$, we will define operators $R_{e,f}^k $
mapping $k$-forms to ($k-j$)-forms.  We will refer to these operators
as order reduction operators.  To define them, we recall that the map
$G_f$ maps $\Omega_f \times \S_f^c$ to $\Omega_f$, and as a
consequence, the corresponding pullback, $G_f^*$, is a map
\[
G_f^* : \Lambda^k(\Omega_f) \to \Lambda^k(\Omega_f \times \S_f^c)
\]
and we can express
\[
\Lambda^k(\Omega_f \times \S_f^c) 
= \sum_{j=0}^k \Lambda^j(\Omega_f) \otimes \Lambda^{k-j}(\S_f^c).
\]
Furthermore, for each $0 \le j \le k$, there is a canonical map $\Pi_j
: \Lambda^k(\Omega_f \times \S_f^c) \to \Lambda^j(\Omega_f) \otimes
\Lambda^{k-j}(\S_f^c)$ such that $\sum_{j=0}^k \Pi_j $ is the
identity.  We refer to \cite[Section 5.1]{bubble-II} for more details.
As in \cite{bubble-II}, all the operators $R_{e,f}^k$ will be of the
form
\begin{equation}\label{form-R}
  (R_{e,f}^k u)_{\lambda} =  \int_{\Omega} (\Pi_j G_f^*u)_{\lambda} \wedge z_{e,f},
  \quad  \lambda \in  \S_f^c,
\end{equation}
where the functions $z_{e,f}$ are trimmed linear $n-j$ forms for
$(e,f) \in \Delta_{j,m}$, and with support in $\Omega_{f} \cap
\Omega_e^E$.  The construction of the functions $\{z_{e,f}\}$, given
in Section~\ref{sec:mesh-functions} below, will deviate from the
corresponding construction given in \cite{bubble-II}.  In fact, the
careful construction of these functions below represents the main tool
for obtaining the improved results we derive in this paper, as
compared to the results presented in \cite{bubble-II}.

For pairs $(e,f) \in \Delta_{0,m}$, i.e., when $e$ is a vertex, we
define $z_{e,f} =-z_{\<e,f\>}$, where the $n$-forms $z_{\<e,f\>}$ are
defined by \eqref{def-zf-n} and \eqref{def-zf}.  As a consequence, for
any $e \in \Delta_0(f^*)$, we have $R_{e,f}^k = -A_{\<e,f\>}^k$.  The
functions $\{z_{e,f}\}$ will be constructed to satisfy the relation
\begin{equation}\label{z-prop}
d z_{e,f} = (-1)^{j+1}(\delta z)_{e,f}, \quad (e,f) \in \Delta_{j,m},
\end{equation}
for $j \ge 1$, where the generalized coboundary operator defined for
pairs of simplices in $\{\Delta_{j,m} \}$ is given by
\[
(\delta z)_{e,f} = \sum_{i \in I(e)} (-1)^{\sigma_e(x_i)} z_{e(\hat x_i),f}.
\]
\begin{remark} The identity, $R_{e,f}^k = -A_{\<e,f\>}^k$ and the definition of 
the $\delta$ operator appear to deviate from the corresponding
relations in \cite{bubble-II}.  However, as stated above, in the
present paper it is more convenient to use a different, but
equivalent, labeling of the $z$ functions and $R$ operators as
compared to the previous paper. More precisely, the operators
$R_{e,f}^k$ introduced above were labeled by the pair $(e,\<e,f\>)$ in
\cite{bubble-II}, and this causes minor differences in the relations
above, and also at a few occurrences below.
\end{remark}
As a consequence of the definition of the operators $R_{e,f}^k$, given
by \eqref{form-R} and \eqref{z-prop}, the operators $R_{e,f}^k$ will
satisfy the relation
 \begin{equation}\label{Rkd-delta}
 R_{e,f}^{k+1} du = (-1)^j dR_{e,f}^k u - (\delta R^ku)_{e,f}, 
\quad (e,f) \in \Delta_{j,m}, \, 0 \le j \le k+1,
 \end{equation}
where $\Delta_{j,m}$ is defined to be the emptyset if $j \ge
n-m$. Furthermore, $(\delta R^ku)_{e,f}$ is taken to be zero for $e
\in \Delta_0$.  When $e \in \Delta_{k+1}(f^*)$, $R_{e,f}^k u = 0$ and
\eqref{Rkd-delta} reduces to
 \begin{equation}\label{Rkd-delta-spec}
   R_{e,f}^{k+1} du =  - (\delta R^ku)_{e,f}, \quad (e,f) \in
          \Delta_{k+1,m},
 \end{equation}
which is consistent with \eqref{RA-delta}. We refer to Section 5.3 of
\cite{bubble-II} and Section~\ref{sec:order-reduct} below for more
details.

The following lemma generalizes the mapping properties \eqref{A-prop}
of the average operators $A_f^k$ to similar results for the order
reduction operators. In fact, this result corresponds to Proposition
5.2 of \cite{bubble-II}.

\begin{lem}\label{lem:R-pol-prop}
Assume that $(e,f )\in \Delta_{j,m}(\T)$.
\begin{itemize}
\item[i) ]If $u \in \Lambda^k( \T_f)$, then $b^{-j}R_{e,f}^k u 
\in \Lambda^{k-j}(\S_f^c)$.
\item[ii)] If 
$u \in \P_r\Lambda^k( \T_f)$, then $b^{-j}R_{e,f}^k u 
\in \P_{r}\Lambda^{k-j}(\S_f^c)$.
\item[iii)] If $u \in \P_r^-\Lambda^k(\T_f)$, then 
$b^{-j} R_{e,f}^k u \in \P_{r}^-\Lambda^{k-j}(\S_f^c)$.
\end{itemize}
\end{lem}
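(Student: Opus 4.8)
The plan is to read off the factor $b^j$ directly from the differential of $G_f$ on the product $\Omega_f \times \S_f^c$, so that $b^{-j}R_{e,f}^k u$ becomes an integral of a nonsingular form which is then handled exactly as in the averaging result \eqref{A-prop}. First I would record the full differential of $G_f$ at $(y,\lambda)$, complementing the $\lambda$-derivative $\sum_{i\in I(f)}(x_i-y)\,d\lambda_i$ already displayed in the text: since $b(\lambda)=1-\sum_{i}\lambda_i$, the derivative in the $\Omega_f$-directions is scalar multiplication by $b(\lambda)$, i.e. on $(v_y,v_\lambda)\in T\Omega_f\oplus T\S_f^c$ one has $DG_f(y,\lambda)(v_y,v_\lambda)=b(\lambda)\,v_y+\sum_{i\in I(f)}(x_i-y)(d\lambda_i\cdot v_\lambda)$. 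Thus the $\Omega_f$-slot is contracted through the scalar $b(\lambda)$, while the $\S_f^c$-slot produces only the $\lambda$-independent vectors $x_i-y$ paired with $d\lambda_i$.

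Consequently, when $G_f^*u$ is split into bigraded pieces, the component $\Pi_j G_f^*u\in\Lambda^j(\Omega_f)\otimes\Lambda^{k-j}(\S_f^c)$ is obtained by feeding $j$ arguments into the $\Omega_f$-slot and $k-j$ into the $\S_f^c$-slot, and each of the $j$ former contributions carries exactly one factor $b(\lambda)$. Hence I would write $\Pi_j G_f^*u=b^j\,\omega$, where $\omega\in\Lambda^j(\Omega_f)\otimes\Lambda^{k-j}(\S_f^c)$ has coefficients built only from $u$ evaluated at $G_f(y,\lambda)$ and from the vectors $x_i-y$, and no longer contains the singular factor. Because $b$ depends on $\lambda$ alone while the integration in \eqref{form-R} is over $y\in\Omega$, the factor $b^j$ pulls out of the integral, giving
\[
b^{-j}R_{e,f}^k u=\int_{\Omega}\omega_\lambda\wedge z_{e,f},\qquad \lambda\in\S_f^c,
\]
a $(k-j)$-form on $\S_f^c$.

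The remaining integral is of the same averaging type as the one defining $A_f^k$, the only differences being the bigraded projection (already absorbed into $\omega$) and the replacement of the $n$-form $z_f$ by the trimmed linear $(n-j)$-form $z_{e,f}$. I would therefore invoke the change-of-variables argument underlying Lemma 2.1 and Proposition 5.2 of \cite{bubble-II}: for $b(\lambda)\neq0$ the map $y\mapsto x=G_f(y,\lambda)$ inverts to $y=b(\lambda)^{-1}\big(x-\sum_i\lambda_i x_i\big)$, which straightens out the piecewise structure and shows that, after integration in $y$, the result depends smoothly on $\lambda$; this yields (i). For (ii), tracking degrees shows that $u\in\P_r\Lambda^k(\T_f)$ makes the coefficients of $\omega$ polynomial in $\lambda$ of degree at most $r$ — the dependence of $G_f$ on $\lambda$ being affine of degree one, and the vectors $x_i-y$ being $\lambda$-independent — so the integral lies in $\P_r\Lambda^{k-j}(\S_f^c)$. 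Part (iii) follows by the same bookkeeping, now using the Koszul/homogeneity characterization of the trimmed spaces exactly as in the $\P_r^-$ case of \eqref{A-prop}.

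The step I expect to be most delicate is the clean extraction of the factor $b^j$ from $\Pi_j G_f^*u$, i.e. verifying that every contribution to the $\Lambda^j(\Omega_f)$-slot carries \emph{precisely} one power of $b$ and that the residual $\omega$ is genuinely free of the singularity at $b=0$; once this is in place, smoothness and the polynomial-degree claims reduce to the already-established properties of the averaging operators. A secondary point requiring care is the degree count in the trimmed case (iii), where one must check that the bigraded projection together with the $y$-integration against the trimmed linear form $z_{e,f}$ does not break the $\P_r^-$ structure.
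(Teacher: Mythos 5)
Your extraction of the factor $b^j$ is exactly the right mechanism, and it is the heart of the matter: since $G_f(y,\lambda)=\sum_{i\in I(f)}\lambda_i x_i+b(\lambda)\,y$, the $y$-derivative of $G_f$ is $b(\lambda)$ times the identity, so by multilinearity each of the $j$ arguments fed into the $\Omega_f$-slot of $\Pi_j G_f^*u$ contributes precisely one factor of $b(\lambda)$, giving $\Pi_j G_f^*u=b^j\omega$ with $\omega$ built only from $u\circ G_f$ and the $\lambda$-independent vectors $x_i-y$; the step you flagged as most delicate is therefore secure. This is precisely the structure underlying Proposition 5.2 of \cite{bubble-II}, to which the paper simply defers --- it gives no in-text proof of this lemma, remarking only that the argument of \cite{bubble-II} carries over to the modified weights $z_{e,f}$ --- so in outline you have reconstructed the intended proof.

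However, the way you finish part (i) has a genuine hole: the inversion $y=b(\lambda)^{-1}\big(x-\sum_i\lambda_i x_i\big)$ is only available where $b(\lambda)\neq 0$, while the lemma asserts that $b^{-j}R_{e,f}^ku$ is smooth on all of $\S_f^c$, including the face $b=0$; moreover the substitution makes matters worse rather than better, since it trades the fixed piecewise structure in $y$ for $\lambda$-dependent image simplices and a Jacobian factor $b^{-n}$. The correct (and simpler) finish, which your own setup already supplies, is simplex-wise: $z_{e,f}$ is supported in $\Omega_f$, so $\int_\Omega=\sum_{T\in\Delta_n(\T_f)}\int_T$, and for $y\in T$ the point $G_f(y,\lambda)$ is a convex combination of $y$ and the vertices of $f$, all lying in the convex set $T$; hence $G_f(y,\cdot)$ maps $\S_f^c$ into $T$ and no interelement boundary is ever crossed. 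On each $T\times\S_f^c$ the integrand $\omega\wedge z_{e,f}$ is jointly smooth, and integration in $y$ yields (i) up to and including $b=0$. The same remark is needed to make the degree count in (ii) honest: the coefficients of $\omega$ are only \emph{piecewise} polynomial in $(y,\lambda)$, and are polynomial of degree at most $r$ in $\lambda$ only after restricting $y$ to a single $T$ (using that $G_f$ is affine in $\lambda$), which suffices once the integral is summed simplex by simplex. With this repair, (iii) goes through as you indicate, via the Koszul characterization of the trimmed spaces exactly as in the $\P_r^-$ case of \eqref{A-prop}.
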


\begin{remark} The proof of Proposition 5.2 of \cite{bubble-II}
carries over directly to the present setting, even if the definition
of the weight functions $z_{e,f}$ will be modified below. Therefore,
we omit the proof here.
\end{remark}

\subsection{An outline of the construction}
\label{sec:outline}
In the present notation, the trace preserving operators $C_m^k$,
introduced in \cite[Section 6]{bubble-II}, admit the representation
\begin{multline}\label{Cm-rewritten}
C_m^k u = \sum_{f \in \Delta_m} 
\sum_{g \in \bar \Delta(f)}(-1)^{|f| -|g|} L_g^* A_f^k u 
\\
+\sum_{\substack{(e,f) \in \Delta_{j,m-1} \\ 0 \le j \le n-m}}(-1)^{j-1} 
\sum_{g \in \bar \Delta(f)} (-1)^{|f|-|g|} 
\frac{\phi_e}{\rho_g} \wedge L_{g}^* b^{-j} R_{e,f}^ku.
\end{multline}
where $0 \le m \le n-1$, and where we recall that $R_{e,f}^k = 0$ if
$e \in \Delta_j$, $ j > k$. The definition of the operator $C_m^k$
contains the rational terms $\phi_e/\rho_g$. However, it is
established in \cite{bubble-II}, cf. Lemma 2.2 and Proposition 7.1 of
that paper, that the operator $C_m^k$ commutes with the exterior
derivative, preserves piecewise smoothness and the piecewise
polynomial spaces $\P_r\Lambda^k(\T)$ and $\P_r^-\Lambda^k(\T)$, and
preserves the trace of $u$ on all $f \in \Delta_m$ if $m \ge k$.

\begin{remark}
The fact that the operator $C_m^k$ preserves the trace of $u$ on all
$f \in \Delta_m$ if $m \ge k$, can be derived easily from the
definition above. In fact, by combining the first term in
\eqref{Cm-rewritten} and the second term with $j=0$, we obtain the
primal operator studied in Section 4 of \cite{bubble-II}.  This
operator can be rewritten as
\begin{equation}\label{C-primal}
C_m^k(primal) = \sum_{f \in \Delta_m} 
\sum_{g \in \bar \Delta(f)}(-1)^{|f| -|g|} \frac{\rho_f}{\rho_g}L_g^* A_f^k u.
\end{equation}
By arguing as in the proof of \cite[Lemma 4.1]{bubble-II}, using the
cancellation property with respect to $g$, we can conclude that this
operator preserves the trace on all elements of $\Delta_m$ if $m \ge
k$. Furthermore, for the second term in \eqref{Cm-rewritten} with a
fixed $f \in \Delta_{m-1}$ and $e \in \Delta_j(f^*)$, we can argue in
the same way that the resulting function has support in $\Omega_e \cap
\Omega_f = \Omega_{\<e,f\>}$. As a consequence, if $j >0$, such that
the simplex $\<e,f\> \in \Delta_s$ for $s >m$, these functions have
vanishing trace on all $m$-simplices. Hence, the trace property of
$C_m^k$ follows. The commuting property of the operator $C_m^k$ can
also be shown directly from the properties of the operators $A_f^k$
and $R_{e,f}^k$, cf. Section~\ref{sec:average} above, while the space
preserving properties require a deeper analysis, performed in
\cite[Section 7]{bubble-II}.  In fact, an alternative proof of the
space preserving properties follows as a corollary of the analysis
given in this paper.
\end{remark}

We will derive the desired decomposition \eqref{main-decomp} of
functions $u \in \Lambda^k(\T)$ from the telescoping identity
\begin{equation}\label{telescope-sum}
u = C_0^k u + \sum_{m=1}^n (C_m^k - C_{m-1}^k) u,
\end{equation}
where $C_n^k$ is the identity operator.
We have already observed,  cf. \eqref{pre-decomp}, that 
\[
(C_n^k - C_{n-1}^k) u = u - C_{n-1}^k u = \sum_{f \in \Delta_n} B_f^ku,
\]
where $B_f^k u = \tr_f (u - C_{n-1}^k u )$ is a sum of piecewise
smooth local bubbles.  For the special case $m= 0$, there are no
rational functions present in the definition of the operator $C_0^k$.
In fact, by utilizing that the range of the operator $L_{\emptyset}$
is a single point, i.e., the origin, it follows that $L_{\emptyset}^*$
maps forms of order greater than zero, to zero. As a consequence, 
we can represent the operator $C_0^k$ as
 \begin{equation}\label{C0-rewritten}
   C_0^k u = \sum_{f \in \Delta_0} \Big(L_f^*A_f^k u - L_\emptyset^* A_f^k u\Big)
   + (-1)^{k-1} \sum_{e \in \Delta_{k}}
   \phi_e \wedge L_\emptyset^* R_{e,\emptyset}^k u.
\end{equation}
In particular, from the definition of the operator
$R_{e,\emptyset}^k$, it follows that the second term in
\eqref{C0-rewritten} can be expressed as
\begin{equation*}
  %\label{interp-op}
  W^ku :=   (-1)^{k-1} \sum_{e \in \Delta_{k}} 
   \phi_e \Big(\int_{\Omega} u \wedge z_{e,\emptyset}\Big),
\end{equation*}
which is an element of $\P_1^-\Lambda^k(\T)$.  In other words, $W^k$
is an operator which maps piecewise smooth $k$-forms into the simplest
class of piecewise polynomial $k$ forms, i.e., into trimmed linear
forms. In particular, we recall that for $e \in \Delta_k$, the
functions $z_{e,\emptyset}$ are elements of $\P_1^-\Lambda^{n-k}(\T)$,
with support in $\Omega_e^E$.  We define the local operators
$K_{0,f}^k$ by
\[
K_{0,f}^ku = L_f^*A_f^k u - L_\emptyset^* A_f^k u.
\]
The functions $K_{0,f}^k u$ will have support on $\Omega_f$, and by
using these operators, the identity \eqref{C0-rewritten} can be
written as
\[
C_0^k u = \sum_{f \in \Delta_0} K_{0,f}^k u + W^k u.
\]
As a consequence of the fact that the operators $A_f^k$ commute with
the exterior derivative, it follows that the operators $K_{0,f}^k$
will also commute with $d$.  Furthermore, the operator $W^k$ also
commutes with the exterior derivative.  In fact, from \eqref{d-phi},
we have
\[
dW^k u = (-1)^{k-1}\sum_{e \in \Delta_{k}}
(\partial  \phi)_e \Big(\int_{\Omega} u \wedge z_{e,\emptyset}\Big),
\]
and from \eqref{sum-by-parts}, \eqref{span-d}, \eqref{z-prop}, and the
Leibniz rule, we obtain

\begin{multline*}
\sum_{e \in \Delta_{k}}
(\partial  \phi)_e \Big(\int_{\Omega} u \wedge z_{e,\emptyset}\Big)
=\sum_{e \in \Delta_{k+1}}
\phi_e \Big(\int_{\Omega} u \wedge (\delta z)_{e,\emptyset}\Big)
\\
= (-1)^k   \sum_{e \in \Delta_{k+1}}
\phi_e \Big(\int_{\Omega} u \wedge dz_{e,\emptyset}\Big)
= - \sum_{e \in \Delta_{k+1}}
 \phi_e \Big(\int_{\Omega} du \wedge z_{e,\emptyset}\Big),
\end{multline*}
which implies that $dW^k = W^{k+1}d$.  We will also show below in
Section~\ref{sec:bounds} that $W^k$ can be extended to a bounded
linear operator on $L^2$.

Since we have obtained desired decompositions of the operators $C_n^k
- C_{n-1}^k$ and $C_0^k$, it remains to decompose the functions
$(C_m^k -C_{m-1}^k)u$ for $1 \le m \le n-1$,
cf. \eqref{telescope-sum}.  In fact, the main contribution of this
paper is to show that these operators admit the representation
\begin{equation}\label{Cmk-ident}
  (C_m^k - C_{m-1}^k)u = \sum_{\substack{f \in \Delta_j\\ j = m,m-1}} K_{m,f}^k u,
  \quad 1 \le m \le n-1,
\end{equation}
cf. Proposition~\ref{prop:Cmk-ident} below, where the functions $K_{m,f}^ku$
have support in $\Omega_f$.  Furthermore, each operator $K_{m,f}^k$
commutes with the exterior derivative and preserves piecewise
smoothness and the piecewise polynomial spaces $\P_r\Lambda^k(\T)$ and
$\P_r^-\Lambda^k(\T)$. Our derivation below of the identity
\eqref{Cmk-ident} depends on a careful construction of the family of
operators $\{R_{e,f}^k\}$, or more precisely of the functions
$\{z_{e,f}\}$ defining these operators, cf. \eqref{form-R}.  As a
consequence of the identity \eqref{Cmk-ident}, we obtain the desired
decomposition \eqref{main-decomp}, since the function $u - W^ku$ can
be decomposed into local bubbles of the form
\[
u - W^k u =  \sum_{f \in \Delta} B_{f}^k u,
\]
where each function $B_f^k u$ has support on $\Omega_f$. More precisely,
if we let $K_{n,f}^k = 0$ for each $f \in \Delta_{n-1}$, then 
 \begin{equation}\label{def-B}
B_{f}^k  = K_{m,f}^k + K_{m+1,f}^k, \quad f \in \Delta_m, \, 0 \le m \le n-1,
\end{equation}
while for $f \in \Delta_n$,
 \begin{equation}\label{def-B-n}
B_f^k u = \tr_f (u - C_{n-1}^k u) = \tr_f
\Big(u - (W^k u+ \sum_{\substack{g \in \Delta_m\\0 \le m \le n-1}}B_g^k u)\Big).
\end{equation}
We can summarize the main results we will obtain for the
  construction outlined above in the following theorem.

\begin{thm}\label{thm:main}
For $0 \le k \le n$, there exist operators $W^k :\Lambda^k(\T) \to
\P_1^-\Lambda^k(\T)$ and for each $f \in \Delta_m(\T)$, $0 \le m \le
n$, local operators $B_f^k : \Lambda^k(\T) \to \0\Lambda^k(\T_f)$ such
that the decomposition \eqref{main-decomp} holds.   The
  operators $B_f^k$ can be extended to bounded operators from
  $L^2\Lambda^k(\Omega_f)$ to itself if $0 \le m <  n$, and from
  $L^2\Lambda^k(\Omega_f^E)$ to $L^2\Lambda^k(\Omega_f)$ when $m=n$.
  Furthermore, all these operators commute with the exterior
  derivative, and satisfy the invariant property
  \eqref{space-preserve}.  Finally, the decomposition given by
  \eqref{main-decomp} satisfies the stable decomposition property,
  detailed in Proposition~\ref{prop:L2bound}.
\end{thm}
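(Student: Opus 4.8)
The plan is to assemble the theorem from the pieces already set up in the outline, treating each of its four assertions (the decomposition itself, the $L^2$ boundedness, the commuting property, and space preservation) by reducing everything to corresponding statements about the building blocks $K_{m,f}^k$ and $W^k$. The skeleton is the telescoping identity \eqref{telescope-sum}, so the first task is to verify that every term on the right-hand side has been successfully expressed as a sum of local bubbles. We already have $(C_n^k - C_{n-1}^k)u = \sum_{f \in \Delta_n} B_f^k u$ from \eqref{pre-decomp}, and $C_0^k u = \sum_{f \in \Delta_0} K_{0,f}^k u + W^k u$ from \eqref{C0-rewritten}. The genuinely new ingredient is the representation \eqref{Cmk-ident} of the intermediate differences $(C_m^k - C_{m-1}^k)u$ for $1 \le m \le n-1$, which is stated separately as Proposition~\ref{prop:Cmk-ident}. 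I would therefore first invoke that proposition and then simply collect, for each fixed $f \in \Delta_m$ with $0 \le m \le n-1$, the contributions $K_{m,f}^k$ and $K_{m+1,f}^k$ that land in $\0\Lambda^k(\T_f)$, giving the bubble $B_f^k$ of \eqref{def-B}; the top-dimensional bubbles are handled by \eqref{def-B-n}. Substituting all of this into \eqref{telescope-sum} yields \eqref{main-decomp}, establishing the first claim.

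The commuting property and the space-preservation property \eqref{space-preserve} both follow the same pattern: they hold for each $W^k$ and each $K_{m,f}^k$ individually, hence for the assembled $B_f^k$ by linearity. For $W^k$ the commuting relation $dW^k = W^{k+1}d$ has already been verified explicitly in the outline using \eqref{d-phi}, \eqref{sum-by-parts}, \eqref{span-d}, \eqref{z-prop}, and the Leibniz rule. For the operators $K_{m,f}^k$ I would appeal to the facts recorded in the outline—that these operators commute with $d$ and preserve both piecewise smoothness and the spaces $\P_r\Lambda^k(\T)$, $\P_r^-\Lambda^k(\T)$—which themselves rest on the mapping properties of $A_f^k$ in \eqref{A-prop} and of $R_{e,f}^k$ in Lemma~\ref{lem:R-pol-prop}, together with the relation \eqref{Rkd-delta}. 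The localization of each $K_{m,f}^k u$ to $\Omega_f$, again asserted in the outline, gives $B_f^k : \Lambda^k(\T) \to \0\Lambda^k(\T_f)$, and intersecting with the polynomial spaces yields \eqref{space-preserve}.

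The remaining assertions—the $L^2$ boundedness of the $B_f^k$ (as stated, from $L^2\Lambda^k(\Omega_f)$ to itself for $m<n$ and from $L^2\Lambda^k(\Omega_f^E)$ to $L^2\Lambda^k(\Omega_f)$ for $m=n$) and the stable decomposition property—are the part of the theorem I would defer entirely to Section~\ref{sec:bounds}, citing the forthcoming Proposition~\ref{prop:L2bound}. Here the main obstacle lies: the operators are built from pullbacks $G_f(y,\cdot)^*$ averaged against the weight forms $z_f$ and $z_{e,f}$, which involve the rational factors $\phi_e/\rho_g$ appearing in \eqref{Cm-rewritten}, and obtaining mesh-independent $L^2$ bounds requires controlling these weights and the Jacobians of $G_f$ uniformly. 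The crucial feature that makes this tractable is that the construction is purely mesh-based and independent of polynomial degree, so the bounds obtained for $C_m^k$ and its constituents transfer directly to the new local bubbles. I would thus structure the proof so that the bulk of the analytic work is packaged into Proposition~\ref{prop:L2bound}, leaving the proof of Theorem~\ref{thm:main} itself as an assembly argument that cites \eqref{Cmk-ident}, the elementary properties of $W^k$, and the boundedness result in turn.
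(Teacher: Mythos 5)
Your proposal matches the paper's own proof: Theorem~\ref{thm:main} is indeed established as an assembly argument, combining the telescoping identity \eqref{telescope-sum} with \eqref{pre-decomp} and \eqref{C0-rewritten}, invoking Proposition~\ref{prop:Cmk-ident} for the intermediate differences, drawing the commuting, support, and space-preservation properties of the $K_{m,f}^k$ from Lemma~\ref{prop:K}, and deferring the $L^2$ bounds and stable decomposition to Proposition~\ref{prop:L2bound}. The only minor imprecision is your closing suggestion that the bounds for $C_m^k$ ``transfer'' to the bubbles; the paper instead estimates each $K_{m,f}^k$ directly via the uniformly bounded expansions of Lemma~\ref{lem:expansion} and Lemma~\ref{Q-bound}, but since you delegate this entirely to Proposition~\ref{prop:L2bound}, the structure is the same.
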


\section{The case of scalar-valued functions}
\label{sec:scalar}
To motivate the general theory developed later in this paper, we will
discuss the decomposition \eqref{main-decomp} in the case $k=0$. In
fact, to derive the decomposition \eqref{main-decomp}, we only need to
establish the identity \eqref{Cmk-ident} for $1 \le m \le n-1$. A key
ingredient in the derivation of \eqref{Cmk-ident} is to rely on two
slightly different representations of the operator $C_m^0$.  For
$k=0$, the expression \eqref{Cm-rewritten} can be written as
\begin{equation}\label{Cm0-2}
C_m^0 u =\sum_{f \in \Delta_m} 
\sum_{g \in \bar \Delta(f)}(-1)^{|f| -|g|}\Big[L_g^* A_f^0 u 
-  \sum_{i \in I(f \cap g^*)}
 \frac{\lambda_i}{\rho_g}  L_{g}^* A_f^0u \Big],
\end{equation}
where we recall that the operator $R_{e,f}^k u = - A_{\<e,f\>}^k u$
when $(e,f) \in \Delta_{0,m-1}$.  Alternatively, we also have
\begin{equation}\label{Cm0-1}
C_m^0 u =\sum_{f \in \Delta_m} 
\sum_{g \in \bar \Delta(f)}(-1)^{|f| -|g|} \frac{\rho_f}{\rho_g}L_g^* A_f^0 u
:= \sum_{f \in \Delta_m}  C_{m,f}^0 u.
\end{equation}
cf. \eqref{C-primal}.

If we fix $f \in \Delta_m$, then it follows from the definition of the
average operators $A_f^0$ that the function $A_f^0 u$ is defined on
the standard simplex $\S_f^c$, and only depends on $u$ restricted to
$\Omega_f$. In particular, the value of $A_f^0 u$ at the origin is a
weighted average of $u$ over $\Omega_f$. Furthermore, the function
$C_{m,f}^0 u$ will have support on $\Omega_f$. The latter property
follows from a standard cancellation argument. To see this, consider
an index $i \in I(f)$. For each $g \in \Delta(f)$ such that $i \notin
I(g)$, we have that the two terms
\begin{equation*}
    \frac{\rho_f}{\rho_g} L_g^* A_f^0 u, \quad \text{and}
    \quad \frac{\rho_f}{\rho_{\<x_i,g\>}}L_{\<x_i,g\>}^* A_f^0 u
\end{equation*}
cancel, when $\lambda_i(x) =0$. By repeating this argument, we see
that $C_{m,f}^0 u \equiv 0$ for all $x$ such that $\lambda_i(x) = 0$
for all $i \in I(f)$. However, this means that the function $C_{m,f}^0
u$ has support on $\Omega_f$. If $f \in \Delta_0$, i.e., $f = [x_0]$
is a vertex, then
\begin{equation}
\label{Cm0-0}
    (C_{0,f}^0 u)(x) = (A_f^0 u)(\lambda_0(x)) - (1 -\lambda_0(x))(A_f^0 u)(0).
\end{equation}
This operator preserves piecewise smoothness and the piecewise
polynomial spaces, cf. \eqref{A-prop}. On the other hand, if $f \in
\Delta_1$, say $f = [x_0,x_1]$, then
\begin{multline*}
  %\label{C1f}
  (C_{1,f}^0u)(x) = (A_fu)(\lambda_0(x), \lambda_1(x))
  \\
  - \frac{\rho_f(x)}{1 -\lambda_1(x)} (A_fu)(0, \lambda_1(x))
  - \frac{\rho_f(x)}{1 -\lambda_0(x)} (A_fu)(\lambda_0(x),0)
  + \rho(x) (A_fu)(0, 0).
\end{multline*}
In contrast to the operator $C_{0,f}^0$, defined by \eqref{Cm0-0}, the
operator $C_{1,f}^0$ will not preserve piecewise smoothness. In fact,
due to the appearance of rational coefficients, the operators
$C_{m,f}^0$ for $1 \le m < n$ will in general not preserve piecewise
smoothness. In the construction performed in \cite{bubble-I}, cf. also
\cite{additive-schwarz}, this problem was overcomed by applying the
operators $C_{1,f}^0$ to the residuals $u - C_0^0 u$. This function is
zero at each vertex, and as a consequence, the operators $C_{1,f}^0(I
- C_0^0)$ will preserve piecewise smoothness.  This led to the
sequential procedure used in \cite{bubble-I}, where we start by
constructing the vertex bubbles and then go on to simplexes of higher
dimension by applying each operator $C_{m,f}^0$ to the proper
residuals.  However, to design a decomposition that works well also
for $k$-forms with $k>0$, we will deviate from this construction, even
in the case of zero-forms.  We will consider the complete effect of
all the operators $C_{m,f}^0$ for $f \in \Delta_m$, and show that the
sum, $C_m^0$, will indeed preserve piecewise smoothness and the
piecewise polynomial spaces.

Motivated by the first term in \eqref{Cm0-2}, we will define the
operators $K_{m,f}^0$ by
\begin{equation}
  \label{def-Kmf0}
K_{m,f}^0u = 
\sum_{g \in \bar \Delta(f)}(-1)^{|f| -|g|} L_g^* A_f^0 u, \quad f \in \Delta_m(\T).
\end{equation}
This operator will have domain of dependence $\Omega_f$, preserve
piecewise smoothness and the piecewise polynomial spaces, and have
support on $\Omega_f$, where the latter property follows by a
cancellation argument as above.
  
From \eqref{Cm0-2}, it follows that
\[
C_m^0 u - \sum_{f \in \Delta_m} K_{m,f}^0 u
= - \sum_{g \in \bar \Delta} \sum_{\substack{f \in \Delta_m\\f \supset g}}(-1)^{|f| -|g|}
\sum_{i \in I(f \cap g^*)} 
\frac{\lambda_i}{\rho_g} L_g^* A_{f}^0 u.
\]
However, for each fixed $g \in \bar \Delta$, we have 
\[
\sum_{\substack{f \in \Delta_m\\f \supset g}}\sum_{i \in I(f \cap g^*)} 
= \sum_{i \in I(g^*)} \sum_{\substack{f \in \Delta_m\\f \supset x_i,g}}
=  \sum_{i \in I(g^*)}\sum_{\substack{f^\prime \in \Delta_{m-1}(x_i^*)\\f^\prime \supset g}}
= \sum_{\substack{f^\prime \in \Delta_{m-1} \\f^\prime \supset g}}\sum_{i \in I((f^\prime)^*)},
\]
where we have introduced $f^\prime = f(\hat x_i)$. As a consequence, we obtain
\[
C_m^0 u - \sum_{f \in \Delta_m} K_{m,f}^0 u
= \sum_{f  \in \Delta_{m-1}}\sum_{g \in \bar \Delta(f)}(-1)^{|f| -|g|} 
\sum_{i \in I(f^*)}\frac{ \lambda_i}{ \rho_g} L_g^* A_{\<x_i,f \>}^0 u.
\]
Furthermore, from \eqref{def-zf} and \eqref{Cm0-1}, we also have
\[
C_{m-1}^0u = \sum_{f \in \Delta_{m-1}} 
\sum_{g \in \bar \Delta(f)}(-1)^{|f| -|g|} 
\frac{\rho_f}{|f^*|\rho_g}\sum_{i \in I(f^*)}L_g^* A_{\<x_i,f\>}^0 u.
\]
By combining these representations of $C_m^0$ and $C_{m-1}^0$, we
obtain the identity
\begin{multline}\label{decomp-pre-0}
(C_m^0 - C_{m-1}^0)u - \sum_{f \in \Delta_m} K_{m,f}^0 u\\
= \sum_{f \in \Delta_{m-1}}\sum_{g \in \bar \Delta(f)}(-1)^{|f| -|g|} \rho_g^{-1}
\sum_{i \in I(f^*)}\Big[\Big(\lambda_i
  - \frac{\rho_f}{|f^*|}\Big) L_g^* A_{\<x_i,f\>}^0 u\Big].
\end{multline}
Therefore, if we define the operators $K_{m,f}^0$ by 
\begin{equation}\label{def-Kmg0}
  K_{m,f}^0 u = \sum_{g \in \bar \Delta(f)} (-1)^{|f| -|g|} \rho_g^{-1} \sum_{i \in I(f^*)}
  \Big[\Big(\lambda_i - \frac{\rho_f}{|f^*|} \Big)  L_g^*A_{\<x_i,f\>}^0 u\Big], 
\end{equation}
for $f \in \Delta_{m-1}$, we obtain that \eqref{decomp-pre-0} simply
reads
\begin{equation}\label{decomp-0}
(C_m^0 - C_{m-1}^0)u = \sum_{\substack{f \in \Delta_j\\ j = m,m-1}} K_{m,f}^0 u,
\end{equation}
which is the desired identity \eqref{Cmk-ident} for $k=0$.

It remains to see that the operators $K_{m,f}^0$, $ f \in
\Delta_{m-1}$, have the desired properties.  Again, by the properties
of the map $A_f^0$, the operators $L_g^* A_{\<x_i,f\>}^0$ and hence
$K_{m.f}^0$ will have domain of dependence $\Omega_f$. In addition, we
need to show that the operators $K_{m,f}^0$, given by
\eqref{def-Kmg0}, preserve piecewise smoothness and piecewise
polynomials, and that the target function has local support.  In fact,
the property that the functions $K_{m,f}^0u$ are supported on the
macroelements $\Omega_f$ follows by a cancellation argument similar to
the one given above. However, formula \eqref{def-Kmg0} contains
rational functions, and therefore, at first glance, it seems unlikely
that the corresponding operators $K_{m,f}^0$ will preserve piecewise
smoothness.  On the other hand, since $\supp K_{m,f}^0 u \subset
\Omega_f$, we can restrict the analysis of the functions $K_{m,f}^0 u
$ to the domain $\Omega_f$, and on this domain we will rely on an
alternative representation of the operators.

To derive the alternative representation of $K_{m,f}^0$, we recall
that when $f \in \Delta_{m-1}$, the manifold $f^*$ is of dimension
$n-m$, and since we assume that $m \le n-1$, the manifold $f^*$ is of
dimension greater or equal to one. For $x_i \in f^*$, we define a
corresponding piecewise linear function, $\beta_{x_i} =
\beta_{x_i}(f)$ by
\[
\beta_{x_i} = \lambda_i - \frac{\rho_f}{|f^*|}. 
\]
The collection $\{\beta_{x_i}(f)\}_{i \in I(f^*)}$ can be seen as an
element in $\C_0(f^*) \otimes \P_1(\T)$.  Furthermore, on the domain
$\Omega_f$, we have that
\[
\partial_0 \beta = \sum_{i \in I(f^*)}\beta_{x_i}
= \Big(\sum_{i \in I(f^*)}
\lambda_i \Big)- \rho_f = 0,
\]
where $\partial_0 = \partial_0(f^*)$. From Lemma~\ref{lem:exact-f*},
we therefore conclude that there is a unique element $\mu = \mu(f) =
\{ \mu_e(f) \}_{e \in \Delta_1(f^*)} \in \C_1(f^*)\otimes \P_1(\T)$,
such that the identities
\begin{equation}\label{mu-rel-0}
(\partial \mu)_{x_i} 
= \lambda_i -  \frac{\rho_f}{|f^*|} = \beta_{x_i},
\quad i \in I(f^*), \quad \text{and } \delta \mu = 0,
\end{equation}
hold on $\Omega_f$, where $\partial = \partial_1(f^*)$ and $\delta =
\delta_1(f^*)$. Note that when $n-m =1$, there is no space $\C_2(f^*)$
(see Figure~\ref{fig:3d-edge-macroelement}) and hence we define
$\delta_1(f^*)$ by \eqref{spec-op}.

We can use the piecewise linear functions, $\{\mu_e(f)\}_{e \in
  \Delta_1(f^*)}$, to obtain an alternative representation of the
operators $K_{m,f}^0 $.  In fact, we have, using \eqref{sum-by-parts}
and \eqref{Rkd-delta-spec}, that
\begin{multline*}
 \sum_{i \in I(f^*)} (\partial \mu )_{x_i} \wedge L_g^*A_{\<x_i,f\>}^0 u
 = - \sum_{i \in I(f^*)} (\partial \mu )_{x_i} \wedge L_g^*R_{x_i,f}^0 u\\
 = - \sum_{e \in \Delta_1(f^*)} \mu_e \wedge L_g^*(\delta R^0u)_{e,f}
 = \sum_{e \in \Delta_1(f^*)} \mu_e \wedge L_g^* R_{e,f}^1 du.
 \end{multline*}
As a consequence, we obtain that the alternative representation,
\begin{equation*}
  %\label{def-Kmg0-alt}
  K_{m,f}^0 u 
  =  \sum_{g \in \bar \Delta(f)} (-1)^{|f| -|g|} \sum_{e \in \Delta_1(f^*)}
  \Big[\mu_{e}(f) \wedge L_g^* b^{-1}R_{e, f}^1du\Big], 
\end{equation*}
holds on $\Omega_f$.  However, from this alternative representation,
the mapping properties of the operators $K_{m,f}^0$ follow directly
from the corresponding properties of the operators $R_{e,f}^1$, given
in Lemma~\ref{lem:R-pol-prop}.

We close the discussion given above by summarizing the
results we have obtained for the operators $K_{m,f}^0$.
\begin{lem}\label{lem:prop-Kmg0}
Let $1 \le m \le n-1$ and $f \in \Delta_j(\T)$, where $j=m,m-1$.
Assume that the corresponding operators $K_{m,f}^0$ are defined by
\eqref{def-Kmf0} and \eqref{def-Kmg0}.  Then the identity
\eqref{decomp-0} holds, $\supp K_{m,f}^0 u \subset \Omega_f$, and the
operators $K_{m,f}^0$ have the mapping properties
\[
K_{m,f}^0(\Lambda^0(\T_f)) \subset \Lambda^0(\T_f),
\qquad
K_{m,f}^0(\P_r(\T_f) ) \subset \P_r(\T_f).
\]
\end{lem}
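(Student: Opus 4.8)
The plan is to verify the three assertions of Lemma~\ref{lem:prop-Kmg0} by drawing together the three representations of $K_{m,f}^0$ that have already been assembled in the preceding discussion, treating the cases $f\in\Delta_m$ and $f\in\Delta_{m-1}$ separately. The decomposition identity \eqref{decomp-0} requires essentially no new work: it is exactly the rearrangement carried out in \eqref{decomp-pre-0}, where the definitions \eqref{def-Kmf0} for $f\in\Delta_m$ and \eqref{def-Kmg0} for $f\in\Delta_{m-1}$ were engineered precisely so that the residual $(C_m^0-C_{m-1}^0)u-\sum_{f\in\Delta_m}K_{m,f}^0u$ collapses to $\sum_{f\in\Delta_{m-1}}K_{m,f}^0u$. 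So the first step is simply to cite that computation and observe that \eqref{decomp-0} is its restatement.

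For the support and mapping properties, I would split along the two values of $j$. When $f\in\Delta_m$, the operator $K_{m,f}^0$ defined by \eqref{def-Kmf0} is an alternating sum of compositions $L_g^*A_f^0$ over $g\in\bar\Delta(f)$. Here the domain of dependence is $\Omega_f$ because $A_f^0u$ depends only on $u|_{\Omega_f}$; the support is $\Omega_f$ by the cancellation argument already spelled out in the text (pairing the $g$ and $\langle x_i,g\rangle$ terms, which cancel when $\lambda_i=0$ for $i\in I(f)$); and the space-preservation follows immediately from \eqref{A-prop}, which gives $A_f^0(\P_r\Lambda^0(\T_f))\subset\P_r\Lambda^0(\S_f^c)$, composed with the pullback property \eqref{L-star-prop} that sends $\P_r\Lambda^0(\S_f^c)$ into $\P_r\Lambda^0(\T)$. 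Since each $L_g^*A_f^0$ is a genuine pullback-average, piecewise smoothness is obvious here.

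The genuinely delicate case is $f\in\Delta_{m-1}$, where the defining formula \eqref{def-Kmg0} contains the rational factors $\lambda_i/\rho_g$ and $\rho_f/(|f^*|\rho_g)$, so that it is not at all clear from \eqref{def-Kmg0} that $K_{m,f}^0u$ is even piecewise smooth, let alone polynomial-preserving. This is the main obstacle, and the plan is to circumvent it exactly as the discussion preceding the lemma does: restrict attention to $\Omega_f$ (justified because $\supp K_{m,f}^0u\subset\Omega_f$, again by cancellation in $g$), and then replace \eqref{def-Kmg0} by the \emph{alternative representation}
\[
K_{m,f}^0u=\sum_{g\in\bar\Delta(f)}(-1)^{|f|-|g|}\sum_{e\in\Delta_1(f^*)}\Big[\mu_e(f)\wedge L_g^*b^{-1}R_{e,f}^1du\Big].
\]
The derivation of this identity is the heart of the matter: one introduces the piecewise linear functions $\beta_{x_i}=\lambda_i-\rho_f/|f^*|$, notes $\partial_0\beta=0$ on $\Omega_f$, and invokes Lemma~\ref{lem:exact-f*} (in its $X$-valued form, with $X\subset\P_1\Lambda^0$) to obtain a unique $\mu(f)\in\C_1(f^*)\otimes\P_1(\T)$ with $\partial\mu=\beta$ and $\delta\mu=0$; then a summation-by-parts via \eqref{sum-by-parts} together with the order-reduction relation \eqref{Rkd-delta-spec} converts the $\beta$-weighted sum into a $\mu$-weighted sum of $R_{e,f}^1du$ terms. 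The rational function $1/\rho_g$ is absorbed into the $L_g^*b^{-1}$ factor, since $\rho_g=L_g^*b$. Once this representation is in hand, the $\mu_e(f)$ are genuine piecewise linear (hence piecewise smooth, polynomial) functions, and the mapping properties follow directly from Lemma~\ref{lem:R-pol-prop}, which guarantees $b^{-1}R_{e,f}^1du\in\P_r\Lambda^0(\S_f^c)$ when $u\in\P_r\Lambda^0(\T_f)$ (noting $du\in\P_{r-1}\Lambda^1(\T_f)\subset\P_r\Lambda^1(\T_f)$); composing with $L_g^*$ via \eqref{L-star-prop} and wedging with the piecewise linear $\mu_e(f)$ then keeps us inside $\P_r\Lambda^0(\T_f)$, and the same computation with $\Lambda^0(\T_f)$ in place of $\P_r$ gives piecewise smoothness.
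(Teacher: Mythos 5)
Your proposal is correct and follows essentially the same route as the paper: the identity \eqref{decomp-0} is obtained by citing the rearrangement \eqref{decomp-pre-0}, the support statement by the cancellation argument in $g$, the case $f\in\Delta_m$ directly from \eqref{A-prop} and \eqref{L-star-prop}, and the delicate case $f\in\Delta_{m-1}$ via the alternative representation built from $\mu(f)$ (Lemma~\ref{lem:exact-f*} in its vector-valued form, summation by parts \eqref{sum-by-parts}, and \eqref{Rkd-delta-spec}), with the mapping properties then read off from Lemma~\ref{lem:R-pol-prop}. One small bookkeeping slip in your last step: having embedded $du\in\P_{r-1}\Lambda^1(\T_f)\subset\P_r\Lambda^1(\T_f)$, you place $L_g^*b^{-1}R_{e,f}^1du$ in $\P_r$, and wedging with the piecewise linear $\mu_e(f)$ would then give $\P_{r+1}$, not $\P_r$; to conclude $K_{m,f}^0(\P_r(\T_f))\subset\P_r(\T_f)$ you should instead keep track of the sharper degree, i.e., $du\in\P_{r-1}\Lambda^1(\T_f)$ gives $b^{-1}R_{e,f}^1du\in\P_{r-1}\Lambda^0(\S_f^c)$ by Lemma~\ref{lem:R-pol-prop}, so that the product with the linear factor $\mu_e(f)$ lands exactly in $\P_r$.
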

Finally, returning to the decomposition for $u \in \Lambda^0$, when
$n=3$, we have seen we can write
\begin{equation*}
      u = W^0 u + \sum_{f \in \Delta_0} B_f^0 u + \sum_{f \in \Delta_1} B_f^0 u
    + \sum_{f \in \Delta_2} B_f^0 u + \sum_{f \in \Delta_3} B_f^0 u,
\end{equation*}
where $W^0u$ is the  piecewise linear function
\begin{equation*}
  W^0u =   \sum_{f \in \Delta_0} \lambda_0(x)  A_f^0 u(0)
    = \sum_{f \in \Delta_0} \lambda_0(x) \int_{\Omega} u \wedge z_f.
\end{equation*}
Here $z_f$ is defined by \eqref{def-zf-n} and \eqref{def-zf}.
Furthermore, from \eqref{def-B} and \eqref{def-B-n},
  \begin{align*}
    B_f^0 u &= K_{0,f}^0 u + K_{1,f}^0u, \qquad f \in \Delta_0,
    \\
    B_f^0 u &= K_{1,f}^0 u + K_{2,f}^0u, \qquad f \in \Delta_1,
    \\
    B_f^0 u &= K_{2,f}^0 u, \qquad f \in \Delta_2,
    \\
    B_f^0 u &= \tr_f (u - [W^0u + \sum_{g \in \Delta_0} B_g^0u
      + \sum_{g \in \Delta_1} B_g^0u + \sum_{g \in \Delta_2} B_g^0u]),
    \qquad f \in \Delta_3.
  \end{align*}
  
\section{The local structure of the mesh}
\label{sec:mesh-functions}
Key tools for decomposing the operators $C_m^k- C_{m-1}^k$ into a sum
of local operators with local target space will be various local
functions derived from the given mesh, $\T$. To describe these
functions, we introduce the space $\P_1^-\Lambda^k(\T,f^*)$ as the
subset of $\P_1^-\Lambda^k(\T)$ corresponding to degrees of freedom on
$f^*$. More precisely,
\[
\P_1^-\Lambda^k(\T,f^*) = \Span_{e \in \Delta_k(f^*)} \phi_e.
\]

\subsection{The functions $\mu_e(f)$}
\label{sec:mu-functions}
Let $m$ be an index, $1 \le m \le n$, and $f \in \Delta_{m-1}$, such
that the associated manifold, $f^*$, will be of dimension $n-m$.  In
the special case when $m=n$, the manifold $f^*$ will be of dimension
zero, and consist of one or two points depending on the location of
$f$ relative to the boundary of $\Omega$.  Since this case is special,
we will first assume that $m \le n-1$ such that the manifold $f^*$ is
a least one dimensional.

In the previous section, we constructed functions $\{\mu_e(f)\}_{e \in
  \Delta_1(f^*)}$ such that the identity \eqref{mu-rel-0} holds, where
we can consider the collection $\{\mu_e(f)\}$ as an element of
$\C_1(f^*) \otimes \P_1^-\Lambda^0(\T,f^*)$.  In fact, we will
construct the collection $\{\mu_e(f) \}_{e \in \Delta_j(f^*)} \in
\C_j(f^*) \otimes \P_1^-\Lambda^{j-1}(\T,f^*)$ for $1 \le j \le n$.
This will be done by an inductive process with respect to $j$.  For $e
\in \Delta_0(f^*)$, we define $\mu_e(f)$ to be the constant
$-1/|f^*|$, such that $\{\mu_e(f) \}_{e \in \Delta_0(f^*)}$ can be
viewed as an element of $\C_0(f^*) \otimes
\P_1^-\Lambda^{-1}(\T,f^*)$, where $\P_1^-\Lambda^{-1}(\T,f^*)$ is
identified as $\R$.  Below we will apply the difference operators,
$\partial(f^*)$ and $\delta(f^*)$, to elements of $\C_j(f^*) \otimes
\P_1^-\Lambda^k(\T,f^*)$. This is done with respect to $\C_j(f^*)$,
while the polynomial space $\P_1^-\Lambda^k(\T,f^*)$ is considered
fixed. For example, the operator $\partial(f^*)$ maps elements of
$\C_j(f^*) \otimes \P_1^-\Lambda^k(\T,f^*)$ to $\C_{j-1}(f^*) \otimes
\P_1^-\Lambda^k(\T,f^*)$. On the other hand, the exterior derivative,
$d$, will map elements of $\C_j(f^*) \otimes
\P_1^-\Lambda^{k}(\T,f^*)$ to $\C_j(f^*) \otimes
\P_1^-\Lambda^{k+1}(\T,f^*)$.

In addition to the collection of functions $\{\mu_e(f)\}$, we will
introduce the associated collection of functions $\{\beta_e\}=
\{\beta_e(f)\}_{e \in \Delta_j(f^*)}$ as elements of $\C_j(f^*)
\otimes \P_1^-\Lambda^j(\T)$ for $0 \le j \le n-m$. The function
$\beta_e(f)$ is defined from the corresponding function $\mu_e(f)$ by
\begin{equation}\label{def-beta}
  \beta_e := \rho_f^{j+1} d\Big( \frac{\mu_e}{\rho_f^j}\Big)
  + (-1)^j \phi_e, \quad e \in \Delta_j(f^*),
\end{equation}
if $1 \le j \le n-m$.  In fact, if we let the exterior derivative
$d_{-1}$ be the inclusion map from constants to
$\P_1\Lambda^{0}(\T,f^*)$, then the definition \eqref{def-beta} also
holds when $j=0$, cf.  \eqref{mu-rel-0}.  We observe that if we
restrict to the domain $\Omega_f$, such that $\rho_f = \sum_{i \in
  I(f^*)} \lambda_i$, then $\beta_e(f)$ also admits the representation
\begin{equation}\label{def-beta-alt}
  \beta_e = \sum_{i \in I(f^*)} (\lambda_i d - j d\lambda_i \wedge)\mu_e
  +(-1)^j \phi_e, \quad e \in \Delta_j(f^*).
\end{equation}
It then follows that
\begin{equation*}
  %\label{mu-beta}
  \mu_e \in \P_1^-\Lambda^{j-1}(\T, f^*) \,
  \Longrightarrow \, \tr_{\Omega_f} \beta_e
  \in \P_1^-\Lambda^{j}(\T_{f}, f^*),
\end{equation*}
where $\P_1^-\Lambda^{j}(\T_{f}, f^*) = \tr_{\Omega_f}
\P_1^-\Lambda^{j}(\T, f^*)$. Furthermore, the relation
\eqref{mu-rel-0} can be rephrased as $\beta_e = (\partial(f^*) \mu)_e$
on $\Omega_f$ for $e \in \Delta_0(f^*)$.

\begin{lem}
\label{lem:mu-1}
Let $f \in \Delta_{m-1}(\T)$, where $1 \le m \le n-1$, and $j$ an
index such that $1 \le j \le n-m$. Assume that $\{\mu_e\} =
\{\mu_e(f)\} \in \C_s(f^*) \otimes \P_1^-\Lambda^{s-1}(\T, f^*)$ has
been defined for $s= j-1, j$ such that the identity
\begin{equation}\label{mu-rel}
\beta_e(f) = (\partial(f^*) \mu)_e,
\end{equation}
holds on $\Omega_f$ for $e \in \Delta_{j-1}(f^*)$.  Then
$(\partial(f^*)\beta)_e = 0$ for all $e \in \Delta_{j-1}(f^*)$, and if
$j <n-m$, there exist $\{\mu_e\} = \{\mu_e(f) \} \in \C_{j+1}(f^*)
\otimes \P_1^-\Lambda^{j}(\T,f^*)$ such that \eqref{mu-rel} holds on
$\Omega_f$ for all $e \in \Delta_j(f^*)$.  Furthermore, $\{\mu_e\} \in
C_{j+1}(f^*) \otimes \P_1^-\Lambda^{j}(\T,f^*)$ is uniquely determined
by the condition $\delta(f^*) \mu =0$.
\end{lem}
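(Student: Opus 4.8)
The plan is to split the statement into its two logical halves and treat them with the two tools the section has set up. The first half, the cocycle identity $\partial(f^*)\beta = 0$, is a direct computation on $\Omega_f$ that combines the defining relation \eqref{def-beta}, the nilpotency $d\circ d = 0$, the inductive hypothesis \eqref{mu-rel} at the previous level, and the Whitney identity \eqref{phi-xf}. The second half, the existence and uniqueness of $\mu$ at the next level, is then a formal consequence of the exactness of the complex \eqref{f*-complex}, invoked through Lemma~\ref{lem:exact-f*}.

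First I would establish $(\partial(f^*)\beta)_e = 0$ for $e\in\Delta_{j-1}(f^*)$. Since $\partial(f^*)$ acts only on the chain index, it commutes with $d$ and with multiplication by the scalar $\rho_f$, so applying it to the level-$j$ definition \eqref{def-beta} gives
\[
(\partial(f^*)\beta)_e = \rho_f^{j+1}\, d\bigl(\rho_f^{-j}(\partial(f^*)\mu)_e\bigr) + (-1)^j(\partial(f^*)\phi)_e.
\]
Into the first term I would substitute the inductive hypothesis \eqref{mu-rel} at level $j-1$, i.e.\ $(\partial(f^*)\mu)_e = \rho_f^{\,j}\,d(\rho_f^{-(j-1)}\mu_e) + (-1)^{j-1}\phi_e$; dividing by $\rho_f^{\,j}$ and applying $d$ kills the exact piece via $d\circ d = 0$, leaving $(-1)^{j-1}\rho_f^{j+1}d(\rho_f^{-j}\phi_e)$, whose Leibniz expansion equals $(-1)^{j-1}\bigl(\rho_f\,d\phi_e - j\,d\rho_f\wedge\phi_e\bigr)$. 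On the other hand, summing the Whitney identity \eqref{phi-xf} over $i\in I(f^*)$ and using $\rho_f = \sum_{i\in I(f^*)}\lambda_i$ on $\Omega_f$ gives exactly $(\partial(f^*)\phi)_e = \rho_f\,d\phi_e - j\,d\rho_f\wedge\phi_e$. The two contributions then cancel identically, so $(\partial(f^*)\beta)_e = 0$. This is essentially the same Leibniz bookkeeping that underlies the equivalence of \eqref{def-beta} and \eqref{def-beta-alt}, so those expansions are already available.

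With the cocycle property in hand, the remaining two assertions follow from the generalized exactness of \eqref{f*-complex}. Working on $\Omega_f$ and viewing $\{\beta_e\}_{e\in\Delta_j(f^*)}$ as an element of $\C_j(f^*)\otimes X$ with fixed coefficient space $X=\P_1^-\Lambda^{j}(\T,f^*)$ — legitimate since $\tr_{\Omega_f}\beta_e\in\P_1^-\Lambda^j(\T_f,f^*)$ — the identity $(\partial(f^*)\beta)_e = 0$ is precisely the hypothesis $\partial_j(f^*)\beta=0$ of Lemma~\ref{lem:exact-f*} with $k=j$. Because $j<n-m$, that lemma, in its vector-valued form announced after its statement, yields $\{\mu_e\}\in\C_{j+1}(f^*)\otimes X$ with $(\partial(f^*)\mu)_e = \beta_e$ on $\Omega_f$ for every $e\in\Delta_j(f^*)$, which is \eqref{mu-rel} at level $j$, and guarantees that this $\mu$ becomes unique once the gauge condition $\delta(f^*)\mu=0$ is imposed. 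This delivers existence and uniqueness simultaneously, and advances the induction from the relation at level $j-1$ to the relation at level $j$.

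I expect the cocycle computation to be the main obstacle: no single step is deep, but the $\rho_f$-weights and the alternating signs must be tracked consistently, and every manipulation is valid only after restricting to $\Omega_f$, where $\rho_f=\sum_{i\in I(f^*)}\lambda_i$ and the local form \eqref{def-beta-alt} of $\beta$ applies. A secondary point needing care is the endpoint $j=n-m$: there the complex \eqref{f*-complex} terminates with the special operator \eqref{spec-op} and its last arrow may be redundant when $f$ meets $\partial\Omega$. Since the conclusion only asserts a new $\mu$ for $j<n-m$, the cocycle identity proved above is all that is required in the terminal case, and the induction closes.
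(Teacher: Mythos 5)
Your proposal is correct and takes essentially the same route as the paper's proof: you obtain the cocycle identity by applying $\partial(f^*)$ to the level-$j$ definition \eqref{def-beta}, substituting the inductive relation \eqref{mu-rel} at level $j-1$, killing the exact term via $d\circ d=0$, and matching the Leibniz expansion of $\rho_f^{j+1}d(\rho_f^{-j}\phi_e)$ with $(\partial(f^*)\phi)_e$ through \eqref{phi-xf} and $\rho_f=\sum_{i\in I(f^*)}\lambda_i$ on $\Omega_f$, which is exactly the paper's computation. The existence and uniqueness step likewise coincides with the paper's, invoking the vector-valued form of Lemma~\ref{lem:exact-f*} with $k=j<n-m$ and the gauge condition $\delta(f^*)\mu=0$, with your trace-space remark playing the role of the paper's observation that each $\mu_e\in\P_1^-\Lambda^{j}(\T_f,f^*)$ extends uniquely to $\P_1^-\Lambda^{j}(\T,f^*)$.
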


\begin{proof} Throughout the proof,
  all the identities should be considered to hold on the domain
  $\Omega_f$, and the operators $\partial$ and $\delta$ are defined
  with respect to $f^*$. By assumption, we have
\[
\beta_e = \rho^j d\Big(\frac{\mu_e}{\rho^{j-1}}\Big) + (-1)^{j-1} \phi_e
= (\partial_j \mu)_e, \quad e \in \Delta_{j-1}(f^*),
\]
where $\rho= \rho_f$.
Since $d^2 = 0$ and $d$ commutes with $\partial_j$, this gives
\[
(\partial_j d\Big(\frac{\mu}{\rho^j}\Big))_e
=  (-1)^{j-1} d \Big(\frac{\phi_e}{\rho^j}\Big),
\quad e \in \Delta_{j-1}(f^*).
\]
Hence, it follows from \eqref{def-beta} that for $e \in \Delta_{j-1}(f^*)$
\[
(\partial_j \beta)_e = \rho^{j+1}(\partial_j d\Big(\frac{\mu}{\rho^j}\Big))_e
  + (-1)^j (\partial_j\phi)_e 
    = (-1)^{j-1}[\rho^{j+1}d \Big(\frac{\phi_e}{\rho^j}\Big)
      -  (\partial_j\phi)_e].
\]
However, a direct computation, using \eqref{phi-xf} and $\rho_f =
\sum_{i \in I(f^*)}\lambda_i$ on $\Omega_f$, shows
\[ \rho^{(j+1)}d \Big(\frac{\phi_e}{\rho^j}\Big) 
= \sum_{i \in I(f^* \cap e^*)} (\lambda_i d - j d\lambda_i \wedge )\phi_e
=    \sum_{i \in I(f^*\cap e^*)} \phi_{[x_i,e]} =  (\partial_j \phi)_e.
\]
As a consequence, $(\partial_j \beta)_e = 0$ for $e \in
\Delta_{j-1}(f^*)$.  If $j <n-m$, it follows from
Lemma~\ref{lem:exact-f*} that there exist a uniquely determined
$\{\mu_e\} \in \C_{j+1}(f^*) \otimes \P_1^-\Lambda^j(\T_f,f^*)$ such
that $\beta_e = (\partial_{j+1}\mu)_e$ on $\Omega_f$, for all $e \in
\Delta_j(f^*)$ and $\delta_{j+1} \mu = 0$.  Furthermore, each function
$\mu_e \in \P_1^-\Lambda^j(\T_f,f^*)$ can be uniquely extended to a
function in $\P_1^-\Lambda^j(\T,f^*)$.
\end{proof}

It remains to discuss the case $m=n$. In this case, $f \in
\Delta_{n-1}$, so $f^*$ will only consist of one or two points, and
there is no element in $\Delta_1(f^*)$.  In fact, we will have $|f^*|
=1$ if $f$ is a boundary simplex, and otherwise $|f^*| =2$.  On the
other hand, by adopting the interpretation above of $d_{-1}$ as the
inclusion operator and $\mu_e(f) = -1/|f^*|$ for $e \in
\Delta_0(f^*)$, we obtain
\begin{equation*}
  %\label{def-beta-special}
\beta_e(f):= - \frac{\rho_f}{|f^*|} + \phi_e, \quad e \in \Delta_0(f^*),
\end{equation*}
and $\partial_0(f^*) \beta = 0$.

From Lemma~\ref{lem:mu-1} and an induction argument with respect to
$j$, we obtain the following result.
\begin{cor}\label{mu-sum-up}
  Let $f \in \Delta_{m-1}(\T)$, where $1 \le m \le n$. For all $e \in
  \Delta_j(f^*)$, $0 \le j \le n-m$, there exists functions $\mu_e(f)
  \in \P_1^-\Lambda^{j-1}(\T,f^*)$ and $\beta_e(f) \in
  \P_1^-\Lambda^{j}(\T)$, uniquely defined by the
  inductive procedure above, satisfying $\partial_j(f^*) \beta =0$ on
  $\Omega_f$. Furthermore, if $j < n-m$, then the identity
  \eqref{mu-rel} holds on $\Omega_f$ for all $e \in \Delta_j(f^*)$.
\end{cor}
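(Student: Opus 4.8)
The plan is to prove the corollary by induction on the chain degree $j$, with Lemma~\ref{lem:mu-1} serving as the inductive engine and Lemma~\ref{lem:exact-f*} (in its generalized, vector‑valued form) as the solver for the chain equation at each level. I would first dispose of the degenerate case $m=n$. Here $f^*$ is zero‑dimensional, so only $j=0$ occurs and the sets $\Delta_j(f^*)$ are empty for $j\ge 1$; there is nothing to iterate. One simply records $\mu_e(f)=-1/|f^*|$ and $\beta_e(f)=-\rho_f/|f^*|+\phi_e$ for $e\in\Delta_0(f^*)$, and verifies $\partial_0(f^*)\beta=0$ directly from $\sum_{i\in I(f^*)}\lambda_i=\rho_f$ on $\Omega_f$. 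No instance of \eqref{mu-rel} is asserted, since $n-m=0$.

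For $1\le m\le n-1$ the base of the induction is the level $j=0$ already treated in Section~\ref{sec:scalar}: with $\mu_e=-1/|f^*|$, the defining relation \eqref{def-beta} (using the convention that $d_{-1}$ is the inclusion of constants) gives $\beta_e=-\rho_f/|f^*|+\phi_e$, and the computation behind \eqref{mu-rel-0} yields $\partial_0(f^*)\beta=0$ on $\Omega_f$. Applying Lemma~\ref{lem:exact-f*} in its generalized form, with values in $X=\P_1^-\Lambda^0(\T,f^*)$, then produces a unique $\{\mu_e\}_{e\in\Delta_1(f^*)}$ satisfying $\partial_1(f^*)\mu=\beta$ together with $\delta_1(f^*)\mu=0$, i.e.\ exactly \eqref{mu-rel-0}. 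This furnishes the data required to invoke Lemma~\ref{lem:mu-1} at index $j=1$.

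The inductive step is then a direct iteration of Lemma~\ref{lem:mu-1} for $j=1,2,\dots,n-m$. At each such $j$, the functions $\{\mu_e\}$ at levels $j-1$ and $j$ together with the relation \eqref{mu-rel} for $e\in\Delta_{j-1}(f^*)$ — the output of the previous step — are precisely the hypotheses of the lemma. Its first conclusion gives $\partial_j(f^*)\beta=0$ on $\Omega_f$, where $\beta$ at level $j$ is defined by \eqref{def-beta}; and, when $j<n-m$, its second conclusion produces the unique $\{\mu_e\}_{e\in\Delta_{j+1}(f^*)}$ with $\partial_{j+1}(f^*)\mu=\beta$ (this is \eqref{mu-rel} at level $j$) and $\delta_{j+1}(f^*)\mu=0$. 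This is exactly the input needed for the next step, so the induction advances and terminates at $j=n-m$, where one extracts only the vanishing $\partial_{n-m}(f^*)\beta=0$. The claimed memberships $\mu_e(f)\in\P_1^-\Lambda^{j-1}(\T,f^*)$ and $\beta_e(f)\in\P_1^-\Lambda^{j}(\T)$ are carried along automatically: the $\mu_e$ returned by Lemma~\ref{lem:mu-1} lie in $\P_1^-\Lambda^j(\T_f,f^*)$ and extend uniquely to $\P_1^-\Lambda^j(\T,f^*)$, while $\beta_e\in\P_1^-\Lambda^j(\T)$ follows from \eqref{def-beta} and the observation recorded just before the lemma. Uniqueness of the entire family is inherited level by level from the normalization $\delta(f^*)\mu=0$.

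The main obstacle is not analytic but a matter of careful bookkeeping: the index shift between the chain degree $j$ and the form degree, and the matching of the shifted hypotheses of Lemma~\ref{lem:mu-1} to the conclusions of the preceding step. In particular, I would verify explicitly that the base case supplies $\mu$ at \emph{both} levels $0$ and $1$, so that the lemma can first be invoked at $j=1$, and that at the terminal index $j=n-m$ one extracts only $\partial_{n-m}(f^*)\beta=0$ and not a (nonexistent) $\mu$ at level $n-m+1$.
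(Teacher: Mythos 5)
Your proposal is correct and follows essentially the same route as the paper: the paper proves the corollary exactly by iterating Lemma~\ref{lem:mu-1} inductively in $j$, with the base level furnished by the scalar construction of Section~\ref{sec:scalar} (itself an application of Lemma~\ref{lem:exact-f*} to $\partial_0\beta=0$) and the degenerate case $m=n$ handled separately via $\beta_e=-\rho_f/|f^*|+\phi_e$ and $\partial_0(f^*)\beta=0$. Your bookkeeping points — that the base supplies $\mu$ at both levels $0$ and $1$, that uniqueness propagates through the normalization $\delta(f^*)\mu=0$, and that at $j=n-m$ one only extracts $\partial_{n-m}(f^*)\beta=0$ — are precisely the details the paper leaves implicit.
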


\begin{remark}
  An alternative view of the construction of
    the functions $\{ \mu_e(f)\}$ given above can be given by expanding
    the function $\mu_e \in \P_1^-\Lambda^{j-1}(\T,f^*)$ in the form
\begin{equation}\label{mu-expand}
\mu_e = \sum_{e^\prime \in \Delta_{j-1}(f^*)} a_{e,e^\prime} \phi_{e^\prime},
\end{equation}
where the real coefficients $\{a_{e,e^\prime} \}$ can be identified
with an element $a_e \in \C_{j-1}$.  It follows from \eqref{d-phi},
\eqref{phi-xf},  and \eqref{def-beta-alt},  that if we
restrict to $\Omega_f$, then the function $\beta_e = \beta_e (f)$
admits the representation
\begin{equation}\label{beta-expand}
  \beta_e - (-1)^j  \phi_e
  =  \sum_{\substack{e^\prime \in \Delta_j(f^*)\\ i \in I(e^\prime)}}
  (-1)^{\sigma_{e^\prime}(x_i)} a_{e, e^\prime(\hat x_i)}\phi_{e^\prime}
=  \sum_{e^\prime \in \Delta_j(f^*)} (\delta a_{e, \cdot})_{e^\prime} \phi_{e^\prime}, 
\end{equation}
for $e \in \Delta_j(f^*)$. As a consequence, the equation
\eqref{mu-rel} for $e \in \Delta_j(f^*)$ can be represented by the
algebraic system
\begin{equation}\label{mu-rel-algebraic}
  (\partial_{j+1} a_{\cdot, e^\prime})_e =  (\delta_{j-1} a_{e,\cdot})_{e^\prime}
  + (-1)^j  1_{e,e^\prime}, \quad e^\prime \in \Delta_j(f^*),
\end{equation}
where $\partial = \partial(f^*)$, and $1_{e,e^\prime} =1$ if $e^\prime =e$ and
equals zero otherwise. Furthermore, the condition 
$\delta_{j+1}(f^*) \mu= 0$ is equivalent to
\begin{equation}
  \label{mu-extra}
  \delta_{j+1} a_{\cdot, e^\prime} = 0, \quad 
  e^\prime \in  \Delta_{j}(f^*).
  \end{equation}
\end{remark}

If $(e,f) \in \Delta_{j,m-1}$ and $g \in \bar \Delta(f)$, then $\rho_g
-\rho_f= \sum_{i \in I(f \cap g^*)} \lambda_i$, which leads to
\[
\rho_g^{j+1}d\Big(\frac{\mu_e(f)}{\rho_g^j}\Big)
= \rho_f^{j+1}d\Big(\frac{\mu_e(f)}{\rho_f^j}\Big) +\sum_{i \in I(f \cap g^*)}
\big(\lambda_i d - j d\lambda_i \wedge \big)\mu_e(f).
\]
 In the analysis below, the functions $\psi_{e,g}(f) \in
\P_1^-\Lambda^j(\T,g^*)$, defined by
\begin{equation}
  \label{def-psi}
\psi_{e,g}(f) = (-1)^{j-1}\sum_{i \in I(f \cap g^*)}
\big(\lambda_i d - j d\lambda_i \wedge \big)\mu_e(f),
\end{equation}
will be useful. We observe that $\psi_{e,f}(f) = 0$, and it follows from 
\eqref{def-beta}, that 
\begin{equation}\label{psi-id}
  \rho_g^{j+1}d\Big(\frac{\mu_e(f)}{\rho_g^j}\Big)
  + (-1)^j  [\phi_e + \psi_{e,g}(f)] = \beta_e(f).
\end{equation} 
Also observe that in the special case when $e \in \Delta_0(f^*)$, then 
\begin{equation}\label{psi-spec}
  \psi_{e,g}(f) = \frac{1}{|f^*|} \sum_{i \in I(f \cap g^*)} \lambda_i
  = \frac{1}{|f^*|} (\rho_g- \rho_f).
\end{equation}

\subsection{Construction of the weight functions $z_{e,f}$}
\label{sec:weight-func}
We recall from Section \ref{sec:average} that the weight functions
$z_{e,f}$ are an essential ingredient for the construction of the
order reduction operators $R_{e,f}^k$.  This section is devoted to an
inductive process for constructing the functions $z_{e,f}$.  In fact,
as a preliminary step, we will first construct a family of local
functions $w_{e,f}$, and then the functions $z_{e,f}$ will be
constructed as
\begin{equation}\label{w-to-z}
z_{e,f} = (\delta^+ w)_{e,f}.
\end {equation}
Here the operator $\delta^+$ is a variant of the coboundary operator
defined for pairs of simplices, given by
\[
(\delta^+ w)_{e,f} = \sum_{i \in I(e)} (-1)^{\sigma_e(x_i)} w_{e(\hat x_i),\<x_i, f\>},
\quad (e,f) \in \Delta_{j,m}. 
\]
This operator will satisfy the complex property, $(\delta^+)^2 = 0$,
and from \cite[Lemma 5.2]{bubble-II}, we recall that $\delta \circ
\delta^+ = - \delta^+ \circ \delta$.  For the construction below, we
will utilize exactness of the complex of trimmed linear forms with
support on $\Omega_f$. Recall that in Section~\ref{sec:intro}, we
introduced the space $\0\P_1^-\Lambda^k(\T_f)$ as the subspace of
$\P_1^-\Lambda^k(\T)$ consisting of functions which vanish on $\Omega
\setminus \Omega_f$. However, if $f$ is a boundary simplex, then
functions in this space will in general not have vanishing trace on
the boundary of $\Omega_f$.  Therefore, we introduce the notation
$\dzero \P_1^-\Lambda^k(\T_f)$ to denote the subspace of
$\0\P_1^-\Lambda^k(\T_f)$ with vanishing trace on $\partial
\Omega_f$. The two spaces are equal if $f$ is an interior simplex, but
in general $\dzero \P_1^-\Lambda^k(\T_f) \subset
\0\P_1^-\Lambda^k(\T_f)$.  We also recall from \eqref{span-d} that if
$w = \sum_{g \in \Delta_j} c_g \phi_g \in \P_1^-\Lambda^j(\T)$, then
$dw = \sum_{g \in \Delta_{j+1}} (\delta_jc)_g\phi_g$.  In particular,
if $w$ has support on $\Omega_f$ for $f \in \Delta$, then the sum can
be restricted to all simplices $g$ such that $g\supset f$.  Motivated
by this, we define $d_{f}^* : \dzero\P_1^-\Lambda^j(\T_f) \to
\dzero\P_1^-\Lambda^{j-1}(\T_f)$, by
\begin{equation}\label{df*}
d_f^*w = \sum_{\substack{g \in \Delta_{j-1}\\g \supset f}} (\partial_jc)_g \phi_g. 
\end{equation}
Below we will utilize exactness of the complex $(\dzero\P_1^-\Lambda(\T_f),d)$
to conclude that a function 
$w \in \dzero\P_1^-\Lambda^j(\T_f)$
is uniquely determined by $dw$ and $d_f^*w$.

The functions $w_{e,f}$ will be defined inductively with respect to
$m$ for all pairs $(e,f) \in \Delta_{j,m}$, for $0 \le m \le n$ and
$-1 \le j <n-m$ as functions in $\dzero
\P_1^-\Lambda^{n-j-1}(\T_f)$. We start the induction process with
$m=n$, and hence only $j=-1$ is allowed.  The set $\Delta_{-1,n}$
consists of pairs of the form $(\emptyset, f)$, where $f \in
\Delta_{n}$.  In this case, we define $w_{\emptyset ,f} = -z_f =
-(\kappa_f/|\Omega_f|)\vol$.  For the general case, when $0 \le m <
n$, we will use a variational approach utilizing tensor product spaces
of the form $\P_1^-\Lambda^j(\T,f^*) \otimes
\P_1^-\Lambda^{n-j-1}(\T)$, i.e., we consider products of elements in
the two functions spaces and with independent spatial variables.
%where we consider
%forms in the first space to be functions of the variable $x$ and forms
%in the second space to be functions of $y$.  
We assume, as an induction hypothesis, that $w_{e,f} \in
  \dzero\P_1^-\Lambda^{n-j-1}(\T_f)$ for all $(e,f) \in \Delta_{j,m}$
  and $-1 \le j <n -m$, have already been constructed.  Furthermore,
  we assume that these functions satisfy
\begin{equation}
  \label{ind-hyp}
(\delta^+ dw)_{e,f} \in \text{range}(\delta), \quad (e,f)\in
  \Delta_{n-m,m-1}(f^*).
\end{equation}
In the case $m=n$, the functions of the form
$w_{\emptyset,\<x_i,f\>}$, involved in \eqref{ind-hyp}, have support
in $\Omega_{\<x_i,f\>} \subset \Omega_f$. Since we will utilize
exactness of the complex consisting of trimmed differential forms with
boundary conditions, the exterior derivative in \eqref{ind-hyp}, in
the case $m=n$, should be interpreted as the integral, and for
$(x_i,f) \in \Delta_{0, n-1}$, we have
\begin{equation*}
        (\delta^+ dw)_{x_i,f} = d (\delta^+w)_{x_i,f}
        = \int_{\Omega} w_{\emptyset,\<x_i,f\>} = -1
        = \int_{\Omega} w_{\emptyset,f} = (\delta dw)_{x_i,f}.
\end{equation*}
Therefore, property \eqref{ind-hyp} holds initially.

For a fixed $f \in \Delta_{m-1}$ and $-1 \le j < n-m$, $w_{e,f} \in
\P_1^-\Lambda^{n-j-1}(\T)$ is defined by
\begin{equation}
  \label{F-def}
\sum_{e \in \Delta_{j}(f^*)} \phi_{e} \otimes w_{e,f}
= (-1)^j \sum_{e \in \Delta_{j+1}(f^*)} \mu_{e}(f) \otimes
(\delta^+ w)_{e,f},
\end{equation}
where we observe that all functions on the right hand side are already
constructed.  Since $\mu_e(f) \in \P_1^-\Lambda^j(\T,f^*)$ for $e \in
\Delta_{j+1}(f^*)$, we can view the right hand side of \eqref{F-def}
as an element of $\P_1^-\Lambda^j(\T,f^*) \otimes
\P_1^-\Lambda^{n-j-1}(\T)$, i.e, it is a trimmed linear $j$-form with
values in $\P_1^-\Lambda^{n-j-1}(\T)$. Therefore, the coefficients
$w_{e,f}$ of the left hand side are uniquely determined as functions
in $\P_1^-\Lambda^{n-j-1}(\T)$. Furthermore, since all the domains of
the form $\{\Omega_{x_i,f}\}$, $i \in I(f^*)$ are contained in
$\Omega_f$, we can conclude from the induction hypothesis that
$(\delta^+ w)_{e,f} \in \dzero\P_1^-\Lambda^{n-j-1}(\T_f)$ and hence
the function $w_{e,f} \in \dzero\P_1^-\Lambda^{n-j-1}(\T_f)$ for
$(e,f) \in \Delta_{j,m-1}$.  In particular, since $\mu_e(f) =
-1/|f^*|$ for $e \in \Delta_0(f^*)$, we obtain that the function
$w_{\emptyset,f}$, $f \in \Delta_{m-1}$ satisfies the recurrence
relation
\begin{equation*}
  %\label{w-recursive-1}
w_{\emptyset ,f} = \frac{1}{|f^*|}
\sum_{i \in I(f^*)} w_{\emptyset ,\<x_i,f\>}.
\end{equation*}
Let $0 \le j \le n-m$. For the discussion below, it will be useful to
observe that the function just defined satisfies
 \begin{align*}
    \sum_{e \in \Delta_j(f^*)} \phi_e \otimes (\delta w)_{e,f}
   &=  \sum_{e \in \Delta_{j-1}(f^*)} (\partial \phi)_e \otimes w_{e,f}
   \\
   &= \sum_{i \in I(f^*)} (\lambda_i d -j d \lambda_i \wedge)
   \sum_{e \in \Delta_{j-1}(f^*)} \phi_e
   \otimes w_{e,f}
   \\
   &=  (-1)^{j-1} \sum_{i \in I(f^*)} (\lambda_i d -j d \lambda_i \wedge)
     \sum_{e \in \Delta_j(f^*)} \mu_e \otimes (\delta^+w)_{e,f},
   \end{align*}
where we have used \eqref{sum-by-parts} and \eqref{phi-xf}, in
addition to \eqref{F-def}. Alternatively, from \eqref{def-beta-alt} we
have
\begin{equation}\label{w-property}
    \sum_{e \in \Delta_j(f^*)}\Big[ \phi_e \otimes (\delta w)_{e,f}
      - \Big(\phi_e + (-1)^{j-1} \beta_e \Big)
   \otimes    (\delta^+w)_{e,f}\Big] = 0,
\end{equation}
where $f \in \Delta_{m-1}$ and $0 \le j \le n-m$, and where the
spatial variable for $\phi_e$ and $\beta_e$ is restricted to
$\Omega_f$.

The set up above defines the functions $w_{e,f}$ for all $(e,f) \in
\Delta_{j,m-1}$, where $-1 \le j <n-m$, from the corresponding
functions defined for elements in $\Delta_{j,m}$.  However, we will
also need the functions $w_{e,f}$ for $(e,f) \in \Delta_{n-m,m-1}$. In
this case, the right hand side of \eqref{F-def} is not well defined
since there are no elements in $\Delta_{n-m,m}$.  Instead, for $f \in
\Delta_{m-1}(\T)$ and $j=n-m$, we will require $w_{e,f}$ to satisfy
\begin{equation}\label{F-def-special}
 \sum_{e \in \Delta_{j}(f^*)} \phi_e \otimes dw_{e,f} =
   \sum_{e \in \Delta_{j}(f^*)} \beta_e(f)
  \otimes (\delta^+w)_{e,f}, 
\end{equation}
and $d_f^* w_{e,f} = 0$.  In \eqref{F-def-special}, the spatial
variables are restricted to $\Omega_f \times \Omega$, i.e., $\phi_e$
and $\beta_e(f)$ mean the restrictions of these quantities to
$\Omega_f$.  It follows from the induction hypothesis and the fact
that $\tr_{\Omega_f}\beta_e(f) \in \P_1^-\Lambda^{n-m}(\T_f,f^*)$,
that the right hand side of \eqref{F-def-special} can be viewed as an
element of $\P_1^-\Lambda^{n-m}(\T_f,f^*) \otimes
\P_1^-\Lambda^{m}(\T)$.  Furthermore, from the hypothesis
\eqref{ind-hyp}, combined with \eqref{sum-by-parts}, we obtain that
\[
 \sum_{e \in \Delta_{j}(f^*)} \beta_e(f)
  \otimes (\delta^+dw)_{e,f}= 0,
\]
since $\partial \beta(f) = 0$.  As a consequence, from the exactness
of the complex $(\dzero\P_1^-\Lambda(\T_f),d)$, we obtain that there
exist elements $w_{e,f} \in \dzero\P_1^-\Lambda^{m-1}(\T_f)$ such that
\eqref{F-def-special} holds.  Finally, since we require $d_f^*w_{e,f}
= 0$, the functions $w_{e,f}$ are uniquely determined.

To complete the induction argument, we need to verify that the
assumption \eqref{ind-hyp} is preserved by the induction step, i.e.,
that the identity \eqref{ind-hyp} holds with $m$ replaced by
$m-1$. However, by combining \eqref{w-property} and
\eqref{F-def-special}, we obtain that the identity
\begin{equation}\label{d-delta-rel}
dw_{e,f} = (-1)^{j+1}[(\delta w)_{e,f} - (\delta^+ w)_{e,f}]
\end{equation}
holds for $(e,f) \in \Delta_{j,m-1}$, $j= n-m$. As a further
consequence of the complex property of $\delta^+$, and the identity
$\delta^+ \circ \delta = -\delta \circ \delta^+$,
%cf.  \cite[Lemma 5.2]{bubble-II}, 
we then obtain
\[
(\delta^+ dw )_{e, f} = (-1)^{n-m+1}(\delta \circ \delta^+ w)_{e,f},
\]
for $(e,f) \in \Delta_{n-m+1,m-2}$. Hence, property \eqref{ind-hyp} at
level $m-1$ is verified.

We summarize the properties of the construction above.
\begin{lem}\label{lem:w-def}
The inductive procedure above uniquely specifies the functions
$w_{e,f} \in \dzero\P_1^-\Lambda^{n-j-1}(\T_{f})$ for all $(e,f) \in
\Delta_{j,m}$, where $0 \le m \le n$ and $-1 \le j < n-m$.
\end{lem}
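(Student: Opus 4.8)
The statement is a summary of the inductive construction carried out just above it, so the plan is simply to organize that construction as a downward induction on $m$, running from $m=n$ to $m=0$, and to check at each stage that (a) the functions $w_{e,f}$ are uniquely pinned down as elements of $\dzero\P_1^-\Lambda^{n-j-1}(\T_f)$, and (b) the induction hypothesis \eqref{ind-hyp} passes from level $m$ to level $m-1$. For the base case $m=n$, only $j=-1$ is permitted, the pairs are $(\emptyset,f)$ with $f\in\Delta_n$, and $w_{\emptyset,f}=-z_f$ is given explicitly, so existence and uniqueness are immediate. Here \eqref{ind-hyp} at the relevant level $\Delta_{0,n-1}$ reduces to the elementary identity already displayed, $(\delta^+dw)_{x_i,f}=\int_\Omega w_{\emptyset,\langle x_i,f\rangle}=-1=\int_\Omega w_{\emptyset,f}=(\delta dw)_{x_i,f}$, where the exterior derivative is read as the integral over $\Omega$.

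For the inductive step I would assume the level-$m$ functions are in hand and satisfy \eqref{ind-hyp}, and split the construction at level $m-1$ according to the value of $j$. In the range $-1\le j<n-m$ the defining relation \eqref{F-def} expresses $\sum_e \phi_e\otimes w_{e,f}$ purely in terms of the already-constructed level-$m$ data $(\delta^+w)_{e,f}$; since $\{\phi_e\}_{e\in\Delta_j(f^*)}$ is a basis of $\P_1^-\Lambda^j(\T,f^*)$, the coefficients $w_{e,f}$ are determined uniquely, and their membership in $\dzero\P_1^-\Lambda^{n-j-1}(\T_f)$ follows from the induction hypothesis together with the containment $\Omega_{\langle x_i,f\rangle}\subset\Omega_f$. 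The delicate range is the top value $j=n-m$, where no level-$m$ data is available to drive \eqref{F-def}; instead $w_{e,f}$ is specified by the pair of conditions \eqref{F-def-special} and $d_f^*w_{e,f}=0$. Existence rests on exactness of the complex $(\dzero\P_1^-\Lambda(\T_f),d)$, which requires the right-hand side of \eqref{F-def-special} to be closed. I would check this by computing $d$ of that right-hand side, obtaining $\sum_e\beta_e(f)\otimes(\delta^+dw)_{e,f}$; the hypothesis \eqref{ind-hyp} lets me write each $(\delta^+dw)_{e,f}$ as a $\delta$-coboundary, and then summation by parts \eqref{sum-by-parts} combined with $\partial(f^*)\beta=0$ from Corollary~\ref{mu-sum-up} forces the expression to vanish. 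Uniqueness of $w_{e,f}$ is then secured by the requirement $d_f^*w_{e,f}=0$, since a form in $\dzero\P_1^-\Lambda(\T_f)$ is determined by $dw$ and $d_f^*w$.

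To close the induction I would re-establish \eqref{ind-hyp} one level lower. Combining the consequence \eqref{w-property} of \eqref{F-def} with the defining relation \eqref{F-def-special} yields the difference identity \eqref{d-delta-rel}, $dw_{e,f}=(-1)^{j+1}[(\delta w)_{e,f}-(\delta^+w)_{e,f}]$ for $j=n-m$. Applying $\delta^+$ and invoking the complex property $(\delta^+)^2=0$ together with the anticommutation $\delta\circ\delta^+=-\delta^+\circ\delta$ from \cite[Lemma 5.2]{bubble-II} then gives $(\delta^+dw)_{e,f}=(-1)^{n-m+1}(\delta\circ\delta^+w)_{e,f}$ for $(e,f)\in\Delta_{n-m+1,m-2}$, which manifestly lies in $\operatorname{range}(\delta)$; this is precisely \eqref{ind-hyp} at level $m-1$. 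I expect the main obstacle to be exactly this top case $j=n-m$: verifying the closedness/compatibility condition that makes the exactness argument applicable, and then threading the resulting identity \eqref{d-delta-rel} through the cochain algebra so that \eqref{ind-hyp} is reproduced. The careful bookkeeping of signs and the distinction between the operators $\partial$, $\delta$, and $\delta^+$ acting on $f^*$ versus on the full pair complex is where errors would most readily arise.
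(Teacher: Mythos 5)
Your proposal is correct and follows essentially the same route as the paper: a downward induction on $m$ with base $m=n$, uniqueness for $-1\le j<n-m$ from the basis property of $\{\phi_e\}$ in \eqref{F-def}, existence at the top level $j=n-m$ from the compatibility $\sum_e \beta_e(f)\otimes(\delta^+dw)_{e,f}=0$ (via \eqref{ind-hyp}, \eqref{sum-by-parts}, and $\partial(f^*)\beta=0$) together with exactness of $(\dzero\P_1^-\Lambda(\T_f),d)$, uniqueness from $d_f^*w_{e,f}=0$, and restoration of \eqref{ind-hyp} at level $m-1$ through \eqref{d-delta-rel} and the identity $\delta\circ\delta^+=-\delta^+\circ\delta$. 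This matches the paper's construction step for step, including the treatment of the $m=n$ case where $d$ is read as the integral.
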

Next we will establish that the functions $w_{e,f}$, introduced above,
satisfy the identity \eqref{d-delta-rel} more generally, i.e., not
only for $(e,f) \in \Delta_{n-m,m-1}$.

\begin{lem}\label{lem:d-delta}
The functions $w_{e,f}$ satisfy the identity \eqref{d-delta-rel} for
all $(e,f) \in \Delta_{j,m}$, where $0 \le m \le n-1$ and $0 \le j <
n-m$.
\end{lem}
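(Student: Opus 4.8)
The plan is to prove \eqref{d-delta-rel} by downward induction on $m$, the engine being the defining relation \eqref{F-def} together with \eqref{w-property}, the identity $\beta_e=(\partial(f^*)\mu)_e$ from Corollary~\ref{mu-sum-up}, and the anticommutation $\delta\circ\delta^+=-\delta^+\circ\delta$. For every level $m$ the top index $j=n-m-1$ is already disposed of: there $w_{e,f}$ is defined through \eqref{F-def-special}, and \eqref{d-delta-rel} was established for precisely these pairs in the course of verifying that \eqref{ind-hyp} propagates through the induction. This settles the base case $m=n-1$ (where $j=0$ is the only, and hence top, index) and reduces the lemma to the non-top range $0\le j<n-m-1$.

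Fix $f\in\Delta_m$ and such a $j$, and assume inductively that \eqref{d-delta-rel} holds at level $m+1$ for every admissible index, in particular on $\Delta_{j,m+1}$. All identities below are read on $\Omega_f$, which costs nothing since each term of \eqref{d-delta-rel} is supported there. Applying the exterior derivative to the $w$-factor of \eqref{F-def} and using that $d$ commutes with $\delta^+$ gives
\[
\sum_{e\in\Delta_j(f^*)}\phi_e\otimes dw_{e,f}
=(-1)^j\sum_{e\in\Delta_{j+1}(f^*)}\mu_e(f)\otimes(\delta^+dw)_{e,f}.
\]
The pairs appearing in $(\delta^+dw)_{e,f}$ lie in $\Delta_{j,m+1}$, so I may substitute the induction hypothesis $dw=(-1)^{j+1}[\delta w-\delta^+w]$; then $(\delta^+)^2=0$ kills the $\delta^+w$ contribution and $\delta^+\delta=-\delta\delta^+$ turns the remainder into $(\delta^+dw)_{e,f}=(-1)^j(\delta\delta^+w)_{e,f}$. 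Hence the right-hand side collapses to $\sum_{e\in\Delta_{j+1}(f^*)}\mu_e(f)\otimes(\delta\delta^+w)_{e,f}$.

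It remains to recognize this as the claimed right-hand side. From \eqref{w-property} one reads $\sum_e\phi_e\otimes[(\delta w)_{e,f}-(\delta^+w)_{e,f}]=(-1)^{j-1}\sum_e\beta_e\otimes(\delta^+w)_{e,f}$, so multiplying by $(-1)^{j+1}$ yields $\sum_e\beta_e\otimes(\delta^+w)_{e,f}$. Because $0\le j<n-m-1$, Corollary~\ref{mu-sum-up} applies and $\beta_e=(\partial(f^*)\mu)_e$; the summation-by-parts identity \eqref{sum-by-parts}, with $\partial$ adjoint to $\delta$ on $\C(f^*)$, then rewrites $\sum_e(\partial\mu)_e\otimes(\delta^+w)_{e,f}$ as $\sum_{e\in\Delta_{j+1}(f^*)}\mu_e(f)\otimes(\delta\delta^+w)_{e,f}$, which is exactly the expression just obtained for $\sum_e\phi_e\otimes dw_{e,f}$. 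Consequently
\[
\sum_{e\in\Delta_j(f^*)}\phi_e\otimes\Big(dw_{e,f}-(-1)^{j+1}\big[(\delta w)_{e,f}-(\delta^+w)_{e,f}\big]\Big)=0,
\]
and since the Whitney forms $\{\phi_e\}_{e\in\Delta_j(f^*)}$ are linearly independent in the first tensor factor, each coefficient vanishes, giving \eqref{d-delta-rel}.

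The main obstacle is the sign and index bookkeeping in the middle step: one has to confirm that feeding the level-$(m+1)$ hypothesis into $\delta^+dw$ produces precisely a $\delta\delta^+w$ term, and that the independent computation on the $\beta$-side yields the very same term. The two routes meet only because $\beta_e$ is, by construction \eqref{def-beta}, equal to $(\partial\mu)_e$, so that summation by parts converts the $\beta$-sum into the $\mu$-paired $\delta\delta^+w$-sum; the matching would fail under any sign slip. A secondary point requiring care is that these identities are valid only after restriction to $\Omega_f$, which is legitimate here precisely because the $w_{e,f}$ (and hence all terms involved) are supported in $\Omega_f$.
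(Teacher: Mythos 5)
Your proposal is correct and takes essentially the same route as the paper's own proof: a downward induction on $m$ whose top-index case $j=n-m-1$ is supplied by \eqref{F-def-special} (as verified during the construction of the $w_{e,f}$), with the inductive step obtained by applying $d$ to \eqref{F-def}, inserting the level-$(m+1)$ hypothesis via $(\delta^+)^2=0$ and $\delta\circ\delta^+=-\delta^+\circ\delta$, and matching against \eqref{w-property} through $\beta_e=(\partial\mu)_e$ and summation by parts \eqref{sum-by-parts}. The only differences are cosmetic: you run the two computations toward the common expression $\sum_{e\in\Delta_{j+1}(f^*)}\mu_e\otimes(\delta\delta^+w)_{e,f}$ rather than as one chain of equalities, and you make explicit the linear independence of the restricted Whitney forms $\{\tr_{\Omega_f}\phi_e\}_{e\in\Delta_j(f^*)}$, which the paper leaves implicit.
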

\begin{proof}
The proof will be done by induction with respect to $m$.  For $m=n-1$,
the only possible value of $j$ is $j= 0$.  In this case, the desired
identity has already been verified above.  Next, we assume that the
identity holds for all $(e,f) \in \Delta_{j,m}$, where $0 \le j <n-m$.
Note that the case $(e,f) \in \Delta_{n-m,m-1}$ is already established
above. Therefore, we can assume that $j < n-m$, and from the support
property of the functions $w_{e,f}$, it is enough to show this identity
on $\Omega_f$.  It follows from \eqref{w-property} that the desired
identity will follow if we can show that
\begin{equation}\label{beta-sum}
   \sum_{e \in \Delta_j(f^*)} \beta_e \otimes (\delta^+w)_{e,f}
   =  \sum_{e \in \Delta_j(f^*)} \phi_e \otimes d w_{e,f},
   \quad f \in \Delta_{m-1},
\end{equation}
when $\phi_e$ and $\beta_e$ are restricted to $\Omega_f$.  If $0 \le j
<m-n$, then from Corollary~\ref{mu-sum-up} we have the identity
$\beta_e = (\partial \mu)_e$ which gives
\begin{multline*}
   \sum_{e \in \Delta_j(f^*)} \beta_e \otimes (\delta^+w)_{e,f}
   =  \sum_{e \in \Delta_j(f^*)} (\sig \mu)_e \otimes (\delta^+w)_{e,f}
   \\
   = -  \sum_{e \in \Delta_{j+1}(f^*)} \mu_e \otimes (\delta^+\circ \delta w)_{e,f}
   = (-1)^j  \sum_{e \in \Delta_{j+1}(f^*)} \mu_e \otimes (\delta^+ dw)_{e,f}
   \\
   =    \sum_{e \in \Delta_j(f^*)} \phi_e \otimes dw_{e,f},
\end{multline*}
where we have used the fact that $\delta \circ \delta^+= - \delta^+
\circ \delta$, \eqref{F-def}, and the induction hypothesis
\eqref{d-delta-rel} with $f^\prime = \{x_i,f\} \in \Delta_m$ and
$e^\prime = e(\hat x_i) \in \Delta_j((f^\prime)^*)$. This establishes
the identity \eqref{beta-sum}, and therefore the proof is completed.
\end{proof}

The desired weight functions $z_{e,f}$ are defined from the
corresponding $w$ functions by the relation \eqref{w-to-z}.  More
precisely, the functions $z_{e,f}$ are defined by \eqref{w-to-z} for
$(e,f) \in \Delta_{j,m}$ for $-1 \le m <n$ and $0 \le j < n-m$.  In
particular, for $(e,f) \in \Delta_{0,n-1}$, we have $z_{e,f} =
-z_{\<e,f\>}$.  Recall also that a key property of these functions is
that they satisfy the identity \eqref{z-prop}, i.e., $d z =
(-1)^{j+1}\delta z$ for $(e,f) \in \Delta_{j,m}$.
   
\begin{lem}\label{lem:z-prop-1}
The functions $z_{e,f}$, where $(e,f) \in \Delta_{j,m}$ for $-1 \le m
\le n-1$ and $0 \le j < n-m$ belong to $\P_1^-\Lambda^{n-j}(\T)$, and
with support in $\Omega_{f} \cap \Omega_e^E$. In addition, $z_{e,f}$
has vanishing trace on the boundary of $\Omega$.  Furthermore, for $j
>0$, the identities \eqref{z-prop} and $(\delta^+ z)_{e,f} = 0$ hold.
\end{lem}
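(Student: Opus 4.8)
The plan is to obtain every assertion directly from the defining relation $z_{e,f} = (\delta^+ w)_{e,f}$ of \eqref{w-to-z}, combined with the structural facts already established for the $w$-functions in Lemmas~\ref{lem:w-def} and~\ref{lem:d-delta}, together with the purely combinatorial identities $(\delta^+)^2 = 0$ and $\delta \circ \delta^+ = -\delta^+ \circ \delta$. No new construction is required: the content is simply to propagate the known properties of $w$ through one application of $\delta^+$.

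First I would record the membership, support, and trace statements. Writing out $(\delta^+ w)_{e,f} = \sum_{i \in I(e)} (-1)^{\sigma_e(x_i)} w_{e(\hat x_i),\<x_i,f\>}$, each summand is indexed by a pair $(e(\hat x_i),\<x_i,f\>) \in \Delta_{j-1,m+1}$, so by Lemma~\ref{lem:w-def} it lies in $\dzero\P_1^-\Lambda^{n-j}(\T_{\<x_i,f\>})$, the degree being $n-(j-1)-1 = n-j$. Hence $z_{e,f} \in \P_1^-\Lambda^{n-j}(\T)$, as claimed. Since $\Omega_{\<x_i,f\>} \subset \Omega_{x_i} \cap \Omega_f$ and $\Omega_{x_i} \subset \Omega_e^E$ for $i \in I(e)$, each summand is supported in $\Omega_f \cap \Omega_e^E$, giving the support statement. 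The vanishing trace on $\partial\Omega$ then follows because each $w_{e(\hat x_i),\<x_i,f\>}$ has vanishing trace on $\partial\Omega_{\<x_i,f\>}$ and vanishes off $\Omega_{\<x_i,f\>}$: where $\partial\Omega$ meets $\bar\Omega_{\<x_i,f\>}$ it is part of $\partial\Omega_{\<x_i,f\>}$, and elsewhere the summand is already zero. These three claims hold for all $0 \le j < n-m$, since Lemma~\ref{lem:w-def} applies to the shifted pairs whenever $m \le n-1$.

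For $j > 0$ the identity $(\delta^+ z)_{e,f} = 0$ is immediate from $\delta^+ z = \delta^+\delta^+ w = 0$. For the commuting relation \eqref{z-prop} I would use that the spatial exterior derivative $d$ commutes with the combinatorial operator $\delta^+$, so $dz_{e,f} = (\delta^+ dw)_{e,f}$. Applying Lemma~\ref{lem:d-delta}, i.e.\ \eqref{d-delta-rel}, to the shifted pairs $(e(\hat x_i),\<x_i,f\>) \in \Delta_{j-1,m+1}$ replaces each $dw_{e(\hat x_i),\<x_i,f\>}$ by $(-1)^{j}\big[(\delta w) - (\delta^+ w)\big]$ at that pair; this is legitimate since $j \ge 1$ together with $j < n-m$ forces $m \le n-2$, keeping all indices in the range required by Lemma~\ref{lem:d-delta}. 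Summing over $i \in I(e)$ assembles the reindexed terms into $(\delta^+ dw)_{e,f} = (-1)^{j}\big[(\delta^+\delta w)_{e,f} - (\delta^+\delta^+ w)_{e,f}\big]$; the second term vanishes by $(\delta^+)^2=0$, and $\delta^+\delta = -\delta\delta^+$ converts the first into $(-1)^{j+1}(\delta\delta^+ w)_{e,f} = (-1)^{j+1}(\delta z)_{e,f}$, which is exactly \eqref{z-prop}. The main obstacle is purely the bookkeeping of the index shifts and signs --- in particular verifying that Lemma~\ref{lem:d-delta} is applicable to the pairs that arise after one application of $\delta^+$, and tracking the factors $(-1)^{\sigma_e(x_i)}$ carefully so that the reindexed sums collapse correctly into $\delta^+\delta w$.
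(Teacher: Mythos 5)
Your proposal is correct and follows essentially the same route as the paper's proof: all claims are propagated through $z_{e,f}=(\delta^+w)_{e,f}$, with the support and trace statements drawn from Lemma~\ref{lem:w-def}, the identity $(\delta^+z)_{e,f}=0$ from $(\delta^+)^2=0$, and \eqref{z-prop} obtained via $dz = \delta^+ dw$, Lemma~\ref{lem:d-delta} at the shifted pairs, and $\delta\circ\delta^+=-\delta^+\circ\delta$. Your explicit verification of the index shifts $(e(\hat x_i),\<x_i,f\>)\in\Delta_{j-1,m+1}$ and the resulting degree $n-j$ is in fact slightly more careful than the paper's own wording, but it is the same argument.
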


\begin{proof} 
That the functions $z_{e,f}$ belong to $\P_1^-\Lambda^{n-j}(\T)$
follows from the fact that $w_{e,f}$ are elements of
$\P_1^-\Lambda^{n-j-1}(\T)$. Furthermore, since $\delta^+$ satisfies the
complex property, we obtain that $\delta^+ z = 0$, and from
\eqref{d-delta-rel} we have
\[
  (d z)_{e,f} =  (\delta^+ d w)_{e,f}
= (-1)^j (\delta^+ \circ \delta w)_{e,f}
\\  =  (-1)^{j+1}(\delta \circ \delta^+ w)_{e,f}
= (-1)^{j+1} (\delta z)_{e, f},
\]
where, as above, we have used the fact that
$\delta \circ \delta^+ = - \delta^+ \circ \delta$.

From the support property of the $w$ functions given in
Lemma~\ref{lem:w-def}, we have for $(e,f) \in \Delta_{j,m}$ and $i \in
I(e)$ that
\[
\supp w_{e(\hat x_i),\<x_i,f\>} \subset \Omega_{\<x_i,f\>}
= \Omega_f \cap \Omega_{x_i},
\]
which implies that 
\[
\supp z_{e,f} \subset 
\bigcup_{i \in I(e)} \big(\Omega_f \cap \Omega_{x_i}\big)= \Omega_f \cap \Omega_e^E.
\]
Finally, since all functions $\{w_{e,f}\}$ have vanishing trace on the
boundary of $\Omega$, this property will also hold for all the
functions $\{z_{e,f}\}$.
\end{proof}
  
Recall that the functions $\psi_{e,g}(f)$, defined by \eqref{def-psi},
where $(e,f) \in \Delta_{j,m-1}$ and $g \in \bar \Delta(f)$, satisfy
the relation \eqref{psi-id}. The following relation between the
functions $\psi_{e,g}(f)$ and the weight functions $z_{e,f}$ will be
crucial in our analysis below.

\begin{lem}
   \label{psi-prop}
   Let $0 \le m \le n-1$ and $0 \le j < n-m$. For any $g \in
   \Delta_s(\T)$, where $-1 \le s \le m-1$, the identity
\begin{equation}\label{residual-idg}
\sum_{\substack{(e,f) \in \Delta_{j,m}\\ f \supset g}}
\psi_{e,g}(f) \otimes z_{e,f}
  = 
  \sum_{\substack{(e,f) \in \Delta_{j,m-1}\\ f \supset g}}
   \phi_e \otimes z_{e,f},
\end{equation}
holds.
\end{lem}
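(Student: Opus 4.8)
The plan is to prove \eqref{residual-idg} directly, for fixed $g$, $m$, $j$, by applying a first-order operator to the defining relation \eqref{F-def} and reindexing; no induction on $m$ is required. For $1\le j<n-m$ and a fixed $f\in\Delta_m$, I would use \eqref{F-def} with the fixed simplex taken in $\Delta_m$ (i.e.\ $m$ replaced by $m+1$) and internal index $j-1$, which together with $z_{e,f}=(\delta^+w)_{e,f}$ from \eqref{w-to-z} reads
\[
\sum_{e\in\Delta_{j-1}(f^*)}\phi_e\otimes w_{e,f}=(-1)^{j-1}\sum_{e\in\Delta_{j}(f^*)}\mu_e(f)\otimes z_{e,f}.
\]
The key step is to apply, in the first tensor slot, the operator $\mathcal{O}_{f,g}:=\sum_{i\in I(f\cap g^*)}(\lambda_i d-j\,d\lambda_i\wedge)$. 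By the very definition \eqref{def-psi} it sends $\mu_e(f)$ to $(-1)^{j-1}\psi_{e,g}(f)$, while by \eqref{phi-xf} it sends $\phi_e$ to $\sum_{i\in I(f\cap g^*)}\phi_{[x_i,e]}$; the two factors $(-1)^{j-1}$ cancel, and for each $f\in\Delta_m$ with $f\supset g$ I obtain
\[
\sum_{e\in\Delta_j(f^*)}\psi_{e,g}(f)\otimes z_{e,f}=\sum_{e\in\Delta_{j-1}(f^*)}\sum_{i\in I(f\cap g^*)}\phi_{[x_i,e]}\otimes w_{e,f}.
\]

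Summing over $f\in\Delta_m$, $f\supset g$, produces the left side of \eqref{residual-idg}, and I would reindex by writing $f=\<x_i,f'\>$ with $f'=f(\hat x_i)\in\Delta_{m-1}$ and $x_i\in\Delta_0(f'^*)$; the constraint $i\in I(f\cap g^*)$ becomes $x_i\notin g$, which is automatic since $x_i\in f'^*$ forces $x_i\notin f'\supset g$. This rewrites the left side as
\[
\sum_{\substack{f'\in\Delta_{m-1}\\f'\supset g}}\sum_{x_i\in\Delta_0(f'^*)}\sum_{e\in\Delta_{j-1}(\<x_i,f'\>^*)}\phi_{[x_i,e]}\otimes w_{e,\<x_i,f'\>}.
\]
For the right side of \eqref{residual-idg} I would expand $z_{e,f'}=(\delta^+w)_{e,f'}=\sum_{i\in I(e)}(-1)^{\sigma_e(x_i)}w_{e(\hat x_i),\<x_i,f'\>}$ and reindex the inner sum by $e'=e(\hat x_i)$, using the orientation identity $(-1)^{\sigma_e(x_i)}\phi_e=\phi_{[x_i,e']}$ (the antisymmetry of the Whitney forms under reordering absorbs both signs); this turns the right side into
\[
\sum_{\substack{f'\in\Delta_{m-1}\\f'\supset g}}\sum_{e'\in\Delta_{j-1}(f'^*)}\sum_{\substack{x_i\in\Delta_0(f'^*)\\ \<x_i,e'\>\in\Delta_j(f'^*)}}\phi_{[x_i,e']}\otimes w_{e',\<x_i,f'\>}.
\]

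It then suffices to observe that the two ranges of summation coincide: for fixed $f'$ and $x_i\in\Delta_0(f'^*)$, the conditions $e\in\Delta_{j-1}(\<x_i,f'\>^*)$ and $\<x_i,e\>\in\Delta_j(f'^*)$ are both equivalent to $\<x_i,e,f'\>$ being a simplex of $\T$, so the two displays agree term by term, with the $w$ being exactly the functions supplied by Lemma~\ref{lem:w-def}. The step I expect to be most delicate is precisely this orientation-and-combinatorics bookkeeping: justifying $(-1)^{\sigma_e(x_i)}\phi_e=\phi_{[x_i,e']}$ and the identification of the two index sets. Finally, the endpoint $j=0$, where the shifted form of \eqref{F-def} degenerates through $\Delta_{-1}$, I would treat separately and by hand: there $\psi_{e,g}(f)=|f^*|^{-1}(\rho_g-\rho_f)$ by \eqref{psi-spec} and $z_{e,f}=-z_{\<e,f\>}$, so the recurrence \eqref{def-zf} and the same substitution $f=\<x_i,f'\>$ reduce both sides of \eqref{residual-idg} to the common expression $-\sum_{h\in\Delta_m,\,h\supset g}(\rho_g-\rho_h)\otimes z_h$.
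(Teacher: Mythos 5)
Your proposal is correct and follows essentially the same route as the paper's proof: the paper likewise expands $z_{e,f}=(\delta^+w)_{e,f}$ on the right and reindexes via $(-1)^{\sigma_e(x_i)}\phi_e = \phi_{[x_i,e(\hat x_i)]}$, and rewrites the left side by combining the definition \eqref{def-psi} of $\psi_{e,g}(f)$ with the shifted relation \eqref{F-def} and \eqref{phi-xf}, which is exactly your application of the operator $\sum_{i\in I(f\cap g^*)}(\lambda_i d - j\,d\lambda_i\wedge)$ in the first tensor slot. The only difference is cosmetic: the paper runs the computation uniformly in $j$ using the $\Delta_{-1}$, $\phi_\emptyset$, and $d_{-1}$ conventions, whereas you verify the $j=0$ endpoint separately by hand via \eqref{psi-spec} and \eqref{def-zf}, which is a harmless (indeed careful) variant.
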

\begin{proof}
Since $z = \delta^+w$, we can reformulate the right hand side of the
identity as
\[
\sum_{\substack{(e,f) \in \Delta_{j,m-1}\\ f \supset g}}
\sum_{i \in I(e)}
(-1)^{\sigma_{e}(x_i)} \phi_e \otimes w_{e(\hat x_i), \<x_i,f\>}
= \sum_{\substack{(e,f) \in \Delta_{j-1,m}\\ f \supset g}} \sum_{i \in I(f \cap g^*)}
 \phi_{[x_i,e]} \otimes w_{e,f}.
\]
On the other hand, using the definition of $\psi_{e,g}(f)$,
\eqref{phi-xf}, and \eqref{F-def}, for $e \in \Delta_{j-1}$, the left
hand side can be rewritten as
\begin{multline*}
(-1)^{j-1}\sum_{\substack{f \in \Delta_{m}\\ f \supset g}}
  \sum_{i \in I(f \cap g^*)}
  (\lambda_i d - j d\lambda_i \wedge)
  \sum_{e \in \Delta_j(f^*)} \mu_e \otimes (\delta^+w)_{e,f}
  \\
 =  \sum_{\substack{f\in \Delta_{m}\\ f \supset g}}
\sum_{i \in I(f\cap g^*)}(\lambda_i d - j d\lambda_i \wedge)
\sum_{e \in \Delta_{j-1}(f^*)} \phi_e\otimes w_{e,f}\\
= \sum_{\substack{(e,f) \in \Delta_{j-1,m}\\ f \supset g}} \sum_{i \in I(f \cap g^*)}
 \phi_{[x_i,e]} \otimes w_{e,f},
\end{multline*}
and hence the desired identity is verified.
\end{proof}

\section{The order reduction operators}
\label{sec:order-reduct}
We recall from Section~\ref{sec:average} above that the order
reduction operators, $R_{e,f}^k$, are defined for $(e,f) \in
\Delta_{j,m}$ from corresponding functions $z_{e,f}$ by
\[
(R_{e,f}^k u)_{\lambda}
=  \int_{\Omega} (\Pi_j G_f^*u)_{\lambda} \wedge z_{e,f}, \quad 
\lambda \in \S_f^c,
\]
cf. \eqref{form-R}. The functions $z_{e,f}$ will be taken to be the
weight functions constructed in the previous section.  In particular,
$z_{e,f} \in \P_1^-\Lambda^{n-j}(\T_f)$ for $(e,f) \in \Delta_{j,m}$,
and with support in $\Omega_f \cap \Omega_e^E$, while the function
$G_f$, defined above, maps the product $\Omega_f \times \S_f^c$ to
$\Omega_f$.  Since $G_f$ is defined on a product space, the target
space for the pullback, $G_f^*$, can be represented as the sum of
tensor products, and $\Pi_j$ is the canonical map of
$\Lambda^k(\Omega_f \times \S_f^c)$ to $\Lambda^j(\Omega_f) \otimes
\Lambda^{k-j}(\S_f^c)$.  By construction, the operators $R_{e,f}^k$
map $k$-forms to $(k-j)$-forms for $(e,f) \in \Delta_{j,m}$, $0 \le j
\le k$. Furthermore, $R_{e,f}^k \equiv 0$ for $j > k$.

The commuting property $dG_f^*u = G_f^* du$, where $u$ is $k$-form on
$\Omega_f$, can in the present setting be expressed by
\begin{equation}
\label{dG*}
d_{\Omega} \Pi_{j-1} G_f^*u  + (-1)^{j} d_S \Pi_j G_f^*u = \Pi_j d G_f^*u = 
\Pi_j G_f^* du, \quad j=1, \ldots, k,
\end{equation} 
where $d_{\Omega}$ and $d_{S}$ denote the exterior derivative with
respect to the spaces $\Omega$ and $\S_f^c$, respectively.  By
combining this with property \eqref{z-prop},
cf. Lemma~\ref{lem:z-prop-1}, we derive, exactly as in the proof of
\cite[Proposition 5.4]{bubble-II}, that the operators $R_{e,f}^k$
satisfy the fundamental relation \eqref{Rkd-delta}.  Furthermore, from
the fact that $\delta^+ z = 0$, cf. Lemma~\ref{lem:z-prop-1}, we
obtain that $(\delta^+ R^k)_{e,f} = 0$.

Next we will use Lemma~\ref{psi-prop} to obtain the following property
of the operators $R_{e,f}^k$.
\begin{lem} \label{lem:R-psi-prop}
 Let $0 \le m \le n-1$ and $0 \le j <
n-m$. For any  $g \in \Delta_s(\T)$, where $-1 \le s \le m-1$, the identity
\begin{equation}\label{residual-idRg}
\sum_{\substack{(e,f) \in \Delta_{j,m}\\ f \supset g}}
  \psi_{e,g}(f)   \wedge L_g^* R_{e,f}^k u
 = \sum_{\substack{(e,f) \in \Delta_{j,m-1}\\ f \supset g}}
 \phi_e   \wedge L_g^* R_{e,f}^k u,
\end{equation}
holds.
\end{lem}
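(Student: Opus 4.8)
The plan is to obtain \eqref{residual-idRg} from the weight-function identity \eqref{residual-idg} of Lemma~\ref{psi-prop}, of which it is the order-reduction analogue, by applying to the second tensor slot of \eqref{residual-idg} the linear operation that turns a weight $z_{e,f}$ into the reduced form $L_g^* R_{e,f}^k u$. The obstacle is that, by the definition \eqref{form-R}, $R_{e,f}^k$ is built from the pullback $G_f^*$, so the assignment $z_{e,f}\mapsto R_{e,f}^k u$ depends on the simplex $f$; since \eqref{residual-idg} pairs weights attached to $f\in\Delta_m$ on the left with weights attached to $f\in\Delta_{m-1}$ on the right, one cannot simply push a single linear map through the identity. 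Removing this $f$-dependence is the heart of the matter.

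First I would record the following reduction, valid for every pair $(e,f)$ with $g\in\bar\Delta(f)$:
\[
L_g^* R_{e,f}^k u = L_g^* \J_g z_{e,f},\qquad (\J_g\zeta)_\lambda := \int_\Omega (\Pi_j G_g^* u)_\lambda\wedge\zeta,\quad \lambda\in\S_g^c,
\]
in which the operator $\J_g$ depends on $g$ and on the fixed $u$, but not on $f$. The point is that $L_g$ maps $\Omega$ into the coordinate face $\S_g^c$ of $\S_f^c$ determined by $\{\lambda_i=0:\ i\in I(f)\setminus I(g)\}$, so $L_g^*$ only feels the trace of $R_{e,f}^k u$ on this face; and on the face the defining map degenerates as $G_f(y,\cdot)|_{\S_g^c}=G_g(y,\cdot)$, since putting $\lambda_i=0$ for $i\notin I(g)$ in $G_f(y,\lambda)=\sum_{i\in I(f)}\lambda_i x_i+b(\lambda)y$ reproduces $G_g$. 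Because $\Pi_j$ commutes with restriction to a coordinate face, the trace of $R_{e,f}^k u$ on $\S_g^c$ is exactly $\J_g z_{e,f}$, which gives the displayed identity. I expect this geometric step to be the main obstacle: it is the only place where the structure of $G_f$ is used, and it is what converts the $f$-dependent operator into the $f$-independent operator $\J_g$.

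With this reduction in hand, the conclusion is routine. The assignment $\alpha\otimes\zeta\mapsto\alpha\wedge L_g^*\J_g\zeta$ is a well-defined linear map $\P_1^-\Lambda^j(\T,g^*)\otimes\P_1^-\Lambda^{n-j}(\T)\to\Lambda^k(\T)$, now independent of $f$. Applying it to the identity \eqref{residual-idg} and rewriting each $L_g^*\J_g z_{e,f}$ as $L_g^* R_{e,f}^k u$ by the reduction, the left-hand side becomes $\sum_{(e,f)\in\Delta_{j,m},\,f\supset g}\psi_{e,g}(f)\wedge L_g^* R_{e,f}^k u$ and the right-hand side becomes $\sum_{(e,f)\in\Delta_{j,m-1},\,f\supset g}\phi_e\wedge L_g^* R_{e,f}^k u$. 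Since \eqref{residual-idg} is an equality in the tensor product, the two images coincide, which is precisely \eqref{residual-idRg}. I would close by noting the degree bookkeeping ($\psi_{e,g}(f)$ and $\phi_e$ are $j$-forms, $L_g^* R_{e,f}^k u$ is a $(k-j)$-form, so each summand is a $k$-form supported in $\Omega_g$), confirming consistency with the statement.
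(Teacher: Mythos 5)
Your proposal is correct and takes essentially the same route as the paper: both deduce \eqref{residual-idRg} from the weight identity \eqref{residual-idg} of Lemma~\ref{psi-prop} by exploiting that, after composition with $L_g^*$, the order reduction operator depends on $f$ only through the weight $z_{e,f}$ (the paper expands $\psi_{e,g}(f)$ in the basis $\{\phi_{e^\prime}\}_{e^\prime \in \Delta_j(g^*)}$, matches coefficients to obtain an identity among the $z_{e,f}$, and then passes to the operators ``from the definition'' before applying $L_g^*$ and reversing the steps). Your explicit justification of that passage --- the degeneration $G_f(y,\cdot)=G_g(y,\cdot)$ on the face $\S_g^c$ of $\S_f^c$, so that $L_g^*R_{e,f}^k u = L_g^*\J_g z_{e,f}$ with $\J_g$ independent of $f$, followed by pushing one $f$-independent linear map through the tensor identity --- is precisely the content the paper's proof leaves implicit, so this is the same argument with a tacit step spelled out.
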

\begin{proof} We consider the simplex $g$ to be fixed.
Since $\psi_{e,g}(f) \in \P_1^-\Lambda^j(\T,g^*)$, it follows that
there exist constants $\{a_{e,e^\prime}(f)\}$ such that $\psi_{e,g}(f)
= \sum_{e^\prime \in \Delta_j(g^*)} a_{e,e^\prime}(f)
\phi_{e^\prime}$. As a consequence, the identity \eqref{residual-idg}
can be expressed as
\[
\sum_{\substack{(e,f) \in \Delta_{j,m-1}\\ f \supset g}}\phi_e \otimes z_{e,f}
    = \sum_{e \in \Delta_j(g^*)} \phi_e \otimes
   \sum_{\substack{(e^\prime,f) \in \Delta_{j,m}\\ f \supset g}}
 a_{e^\prime ,e}(f) z_{e^\prime,f},
\]
which implies that 
\[
\sum_{\substack{f \in \Delta_{m-1}(e^*)\\ f \supset g}} z_{e,f}
= \sum_{\substack{(e^\prime,f) \in \Delta_{j, m}\\ f \supset g}}
 a_{e^\prime ,e}(f) z_{e^\prime,f}, \quad e \in \Delta_j(g^*).
 \]
From the definition of the order reduction operators $R_{e,f}^k$, we
then obtain the corresponding identity
\[
\sum_{\substack{f \in \Delta_{m-1}(e^*)\\ f \supset g}} R_{e,f}^k
=  \sum_{\substack{(e^\prime,f) \in \Delta_{j, m}\\ f \supset g}}
 a_{e^\prime ,e}(f) R_{e^\prime,f}^k, \quad e \in \Delta_j(g^*).
 \]
By applying $L_g^*$ and then reversing the steps above, we obtain the
 desired identity.
\end{proof}

In addition to the operators $R_{e,f}^k$, we will also use the
operators $Q_{e,f}^k$, defined from the functions $w_{e,f}$. More
precisely,
 \[
(Q_{e,f}^k u)_{\lambda}
 =  \int_{\Omega} (\Pi_{j+1} G_f^*u)_{\lambda} \wedge w_{e,f},
 \quad (e,f)  \in \Delta_{j,m},
\]
where $0 \le m \le n$ and $-1 \le j < n-m$. We recall that for $e \in
\Delta_j(f^*)$, the functions $w_{e,f}$ are trimmed linear
$(n-j-1)$-forms with support in $\Omega_f$, and as a consequence, the
operator $Q_{e,f}^k$ maps $k$-forms to $(k -j -1)$-forms.  In
particular, if $k < j+1$, then $Q_{e,f}^k \equiv 0$.  Since $z_{e,f} =
(\delta^+ w)_{e,f}$, we also have
\begin{equation}
  \label{Requiv}
  (\delta^+ Q^k u)_{e,f} = R_{e,f}^k u.
\end{equation}
Since the operators $Q_{e,f}^k$ are constructed from a similar
procedure as the operators $R_{e,f}^k$, the new operators will also
preserve piecewise smoothness and the piecewise polynomial spaces. In
particular, we have for $(e,f) \in \Delta_{j,m}$ that
\begin{equation}\label{tilde-R-prop}
    b^{-(j+1)}Q_{e,f}^k (\P\Lambda^k(\T_f))
   \subset \P\Lambda^k(\S_f^c),
\end{equation}
where $\P$ can either be $\P_r$ or $\P_r^-$.  Here, the extra factor
$b^{-1}$, as compared to the mapping properties of the corresponding
operators $R_{e,f}^k$ given in Lemma~\ref{lem:R-pol-prop}, is due to
the fact that the order of the forms $w_{e,f}$ are reduced by one as
compared to the forms $z_{e,f}$. We refer to the proof of Proposition
5.2 of \cite{bubble-II} for further details.
 
\begin{lem}\label{tilde-R} Let $1 \le m \le n$ and  $j = n-m$.
Assume that $f \in \Delta_{m-1}(\T)$ and $g \in \bar \Delta(f)$. The identity 
\[
\sum_{e \in \Delta_{j}(f^*)} \phi_e \wedge L_g^*
\Big((-1)^{j+1}Q_{e,f}^{k+1}du - dQ_{e,f}^{k} u\Big) = 
\sum_{e \in \Delta_{j}(f^*)} \beta_e(f) \wedge  L_g^*
R_{e,f}^{k}u,
\]
holds on $\Omega_f$.
\end{lem}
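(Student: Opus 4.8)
The plan is to reduce the asserted identity, which is a sum over $e \in \Delta_j(f^*)$ with $j=n-m$, to the defining relation \eqref{F-def-special} for the weight functions $w_{e,f}$ by applying a single linear operation. The key preliminary step is the operator identity, valid for each individual pair $(e,f) \in \Delta_{n-m,m-1}$,
\[
(-1)^{j+1} Q_{e,f}^{k+1} du - dQ_{e,f}^k u = \int_\Omega (\Pi_j G_f^* u) \wedge dw_{e,f}.
\]
Once this is in hand, note that $R_{e,f}^k u = \int_\Omega (\Pi_j G_f^* u) \wedge z_{e,f}$ by definition and that $z_{e,f} = (\delta^+ w)_{e,f}$ by \eqref{w-to-z}; hence both the right-hand side above and $R_{e,f}^k u$ are values of the common linear map $T\colon \omega \mapsto \int_\Omega (\Pi_j G_f^* u) \wedge \omega$, applied to $dw_{e,f}$ and to $z_{e,f}$ respectively, where $T$ sends $(n-j)$-forms on $\Omega$ to $(k-j)$-forms on $\S_f^c$.

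To establish the operator identity I would follow the derivation of \eqref{Rkd-delta}, the only change being that $w_{e,f}$ has form degree $n-j-1$ rather than the degree $n-j$ of $z_{e,f}$. Writing $Q_{e,f}^{k+1}du = \int_\Omega (\Pi_{j+1}G_f^* du)\wedge w_{e,f}$ and inserting the commuting relation \eqref{dG*} with $j$ replaced by $j+1$, namely $\Pi_{j+1}G_f^* du = d_\Omega \Pi_j G_f^* u + (-1)^{j+1} d_S \Pi_{j+1} G_f^* u$, splits $Q_{e,f}^{k+1}du$ into a $d_\Omega$-contribution and a $d_S$-contribution. Because $w_{e,f}$ does not depend on the variable $\lambda$ on $\S_f^c$, the $d_S$-contribution equals $(-1)^{j+1} dQ_{e,f}^k u$, in exact analogy with the $(-1)^j dR_{e,f}^k u$ term in \eqref{Rkd-delta}; after multiplication by $(-1)^{j+1}$ it cancels the term $dQ_{e,f}^k u$ on the left. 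The remaining $d_\Omega$-contribution is handled by integration by parts, using $d_\Omega(\Pi_j G_f^* u \wedge w_{e,f}) = (d_\Omega \Pi_j G_f^* u)\wedge w_{e,f} + (-1)^j \Pi_j G_f^* u \wedge d_\Omega w_{e,f}$; the integral of the left-hand side vanishes by Stokes' theorem, since $G_f$ maps into $\Omega_f$ and $w_{e,f}\in\dzero\P_1^-\Lambda^{n-j-1}(\T_f)$ has vanishing trace on $\partial\Omega_f$ by Lemma~\ref{lem:w-def}. Collecting signs produces the displayed operator identity.

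For the final step I would regard \eqref{F-def-special} as an identity in the tensor product $\P_1^-\Lambda^{n-m}(\T_f,f^*)\otimes\P_1^-\Lambda^m(\T)$, with $\phi_e$ and $\beta_e(f)$ restricted to $\Omega_f$ and with $(\delta^+ w)_{e,f}=z_{e,f}$, so that it reads $\sum_e \phi_e \otimes dw_{e,f} = \sum_e \beta_e(f)\otimes z_{e,f}$. The map $\Phi(\alpha\otimes\omega) = \alpha\wedge L_g^*(T\omega)$ is bilinear and well defined, combining the first tensor factor (a form on $\Omega_f$) with the pulled-back image of the second factor; applying $\Phi$ to both sides sends the left-hand side to $\sum_e \phi_e \wedge L_g^*\big((-1)^{j+1}Q_{e,f}^{k+1}du - dQ_{e,f}^k u\big)$ and the right-hand side to $\sum_e \beta_e(f)\wedge L_g^* R_{e,f}^k u$, which is exactly the asserted identity on $\Omega_f$.

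I expect the principal obstacle to lie entirely in the first step: the careful tracking of signs in the splitting via \eqref{dG*}, in pulling $d_S$ through the integral over $\Omega$, and in the integration by parts, together with the verification that the boundary term genuinely vanishes. Once the operator identity is secured, the passage to the claim through the linear map $\Phi$ applied to \eqref{F-def-special} is purely formal.
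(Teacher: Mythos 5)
Your proposal is correct and takes essentially the same route as the paper's proof: the paper likewise first derives the per-$e$ operator identity $(-1)^{j+1}Q_{e,f}^{k+1}du - dQ_{e,f}^{k}u = \int_{\Omega}\Pi_j G_f^*u \wedge dw_{e,f}$ from \eqref{dG*} together with integration by parts (the boundary term vanishing by the support and trace properties of $w_{e,f}$), and then concludes by pairing \eqref{F-def-special} against $\Pi_j G_f^*u$, pulling back by $L_g^*$, and converting tensor products to wedge products via \eqref{tensor-to-wedge}. Your bilinear map $\Phi$ is simply an explicit formalization of that last step, so there is no gap.
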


\begin{proof}
For a fixed $e \in \Delta_j(f^*)$, we have
\begin{align*}
(-1)^{j+1}Q_{e,f}^{k+1}du - dQ_{e,f}^{k} u
  &= \int_{\Omega} \Big((-1)^{j+1}\Pi_{j +1}G_f^*du
  - d_{S} \Pi_{j +1}G_f^* u\Big)
  \wedge w_{e,f}\\
&= (-1)^{j+1}  \int_{\Omega} d_{\Omega} \Pi_{j}G_f^* u \wedge w_{e,f}
= \int_{\Omega}  \Pi_{j}G_f^* u \wedge dw_{e,f},
\end{align*}
where we have used the identity \eqref{dG*}, and the local support of
$w_{e,f}$.  However, from \eqref{F-def-special} we have that
\[
\sum_{e \in \Delta_{j}(f^*)} \phi_e \otimes
L_g^*\int_{\Omega}  \Pi_{j}G_f^* u \wedge dw_{e,f}
=\sum_{e \in \Delta_{j}(f^*)} \beta_e(f) \otimes  L_g^* R_{e,f}^{k}u
\]
on $\Omega_f$, and hence the desired identity follows from
\eqref{tensor-to-wedge}.
\end{proof}
 
\section{The  local operators $K_{m,f}^k$}
\label{sec:Kmgk}
We recall that the operators $\{B_f^k\}_{f \in \Delta}$, appearing in
the decomposition \eqref{main-decomp}, will be defined by the operator
$W^k$, mapping into the space of trimmed linear $k$-forms, and from
the local operators $\{K_{m,f}^k \}_{f \in \Delta_j}$, $j=m,m-1$,
%where $1 \le m \le n-1$ and $j=m,m-1$, 
by formulas \eqref{def-B}--\eqref{def-B-n}.  The purpose of this
section is to define the operators $\{K_{m,f}^k\}$.  For each value of
$m$, $1 \le m \le n-1$, we define the operators $K_{m,f}^k$, for $ f
\in \Delta_m$, by
\begin{equation}\label{def-Kmf}
   K_{m,f}^k u= \sum_{g \in \bar \Delta(f)}(-1)^{|f| -|g|}L_g^* A_f^k u.
\end{equation}
This is the obvious generalization of the corresponding operators
defined for $k=0$ in Section~\ref{sec:scalar} above.  From the
properties of the operators $A_f^k$, cf. Section~\ref{sec:average}, it
follows that the operator $L_g^* A_f^k$, and hence $K_{m,f}^k$, has
domain of dependence $\Omega_f$.  Furthermore, the operator
\eqref{def-Kmf} commutes with the exterior derivative, preserves
piecewise smoothness and the piecewise polynomial spaces, and by the
cancellation argument, cf. Section~\ref{sec:scalar}, we obtain that
the functions $K_{m,f}^k u$ have support on $\Omega_f$.

For $f \in \Delta_{m-1}$, the generalization of the operators
$K_{m,f}^0$, introduced in Section~\ref{sec:scalar}, to the case of
$k$-forms is less obvious. In this case, we define $K_{m,f}^k$ by an
alternating sum of the form
\begin{equation}
  \label{K-form}
 K_{m,f}^k = \sum_{g \in \bar \Delta(f)}(-1)^{|f| -|g|} K_{m,f,g}^k,
\end{equation}
 where  
\begin{multline}\label{def-K^k}
  K_{m,f,g}^k u =  - L_g^*A_f^k u + \sum_{j=0}^{n-m}
  \frac{(-1)^{j-1}}{\rho_g^{j+1}}
  \sum_{e  \in \Delta_j(f^*)} \Big(\phi_e + \psi_{e,g}(f)\Big)
 \wedge L_g^*R_{e,f}^k u\\
    + \sum_{\substack{e  \in \Delta_{n-m}(f^*)\\j = n-m}} (-1)^{j}
    \Big(d \frac{\phi_e}{\rho_g^{j+1}}\Big) \wedge L_g^* Q_{e,f}^ku.
 \end{multline}
These operators reduce to the corresponding operators $K_{m,f}^0$ if
$k=0$. To see this, observe that all the operators $Q_{e,f}^0$ are
identically zero. Similarly, $R_{e,f}^0 = 0$ if $(e,f) \in
\Delta_{j,m}$, $j > 0$, and if $j= 0$ then $R_{e,f}^0= -
A_{\<e,f\>}^0$. As a consequence, we obtain from \eqref{K-form} and
\eqref{def-K^k} that
\begin{multline*}
 K_{m,f}^0 u =  \sum_{g \in \bar \Delta(f)}(-1)^{|f| -|g|}\Big[- L_g^*A_f^0 u
 +   \rho_g^{-1}
  \sum_{i  \in I(f^*)} \Big(\lambda_i + \psi_{x_i,g}(f)\Big)
 \wedge L_g^*A_{\<x_i,f\>}^0 u \Big]\\
 = \sum_{g \in \bar \Delta(f)}(-1)^{|f| -|g|} \rho_g^{-1}
 \Big[\sum_{i  \in I(f^*)}\Big( \lambda_i - \frac{\rho_f}{|f^*|} \Big)
 \wedge L_g^*A_{\<x_i,f\>}^0 u \Big],
\end{multline*}
where we have used \eqref{def-zf} and \eqref{psi-spec}. Hence, we can
conclude that the definition above agrees with the definition given in
Section~\ref{sec:scalar} when $k=0$, cf. formula \eqref{def-Kmg0}.  We
note that the domain of dependence of all the operators in
\eqref{def-K^k} is $\Omega_f$ and hence the domain of dependence of
$K_{m,f}^k$ is $\Omega_f$.  Furthermore, it follows from
\eqref{def-psi} that
\[
\tr_{\lambda_i = 0} \psi_{e,g}(f) = \tr_{\lambda_i = 0} \psi_{e,\<x_i,g\>}(f),
\quad x_i \in f \cap g^*.
\]
By the cancellation argument introduced in Section~\ref{sec:scalar},
it is then easy to see that the functions $K_{m,f}^k u$ have support
on $\Omega_f$. In fact, if $g \in \bar \Delta(f)$ and $i \in I(f \cap
g^*)$, then
\[
\tr_{\lambda_i = 0} \Big(K_{m,f,\<x_i,g\>}^ku - K_{m,f,g}^ku\Big) = 0,
\]
which shows that $K_{m,f}^k u$ has support on
$\Omega_{x_i}$. Furthermore, since $i \in I(f)$ is arbitrary, the
support of $K_{m,f}^k u$ must be limited to
\[
\Omega_f = \bigcap_{i \in I(f)} \Omega_{x_i}.
\]
A key step to show that the operator $K_{m,f}^k$ commutes with the
exterior derivative and preserves piecewise smoothness is the
following alternative expression of the first part of the operator
$K_{m,f,g}^k$.

\begin{lem}\label{lem:K}
  Assume that $1 \le m \le n-1$. For each  $f \in \Delta_{m-1}(\T)$ and
  $g \in \bar \Delta(f)$, the identity 
\begin{multline}
\label{ident-K}
- Lg^* A_f^k u + \sum_{j=0}^{n-m}
  \frac{(-1)^{j-1}}{\rho_g^{j+1}}
  \sum_{e  \in \Delta_j(f^*)} \Big(\phi_e + \psi_{e,g}\Big)
  \wedge L_g^*R_{e,f}^k u
  \\
 = \sum_{j=1}^{n-m} \sum_{e \in \Delta_j(f^*)}\Big[d(\mu_e\wedge L_g^*b^{-j}R_{e,f}^k u)
   + \mu_e \wedge L_g^*b^{-j}R_{e,f}^{k+1} du\Big]
 - \sum_{\substack{e \in \Delta_j(f^*)\\ j=n-m}}
\frac{\beta_e}{\rho_g^{j+1}} \wedge L_g^*R_{e,f}^k u
\end{multline}
holds on $\Omega_f$.  Here, the functions $\mu_e,\, \beta_e,\,
\psi_{e,g}$ are all associated to the simplex $f$.
\end{lem}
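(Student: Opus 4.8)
The plan is to transform the left-hand side into the right-hand side by first eliminating the combination $\phi_e + \psi_{e,g}$ through the identity \eqref{psi-id}. For $e \in \Delta_j(f^*)$ that identity reads $\rho_g^{j+1}d(\mu_e/\rho_g^j) + (-1)^j(\phi_e + \psi_{e,g}) = \beta_e$; rearranging it shows that each summand $\tfrac{(-1)^{j-1}}{\rho_g^{j+1}}(\phi_e+\psi_{e,g})\wedge L_g^*R_{e,f}^k u$ equals $\big[d(\mu_e/\rho_g^j) - \beta_e/\rho_g^{j+1}\big]\wedge L_g^*R_{e,f}^k u$. Hence the whole alternating sum on the left (after subtracting $L_g^*A_f^k u$) becomes $\Sigma := \sum_{j=0}^{n-m}\sum_{e\in\Delta_j(f^*)}\big[d(\mu_e/\rho_g^j)-\beta_e/\rho_g^{j+1}\big]\wedge L_g^*R_{e,f}^k u$. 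Throughout I would use that the restriction of $b=b_f$ to $\S_g^c$ pulls back to $\rho_g$ under $L_g$, so that $L_g^*b^{-j}R_{e,f}^k u = \rho_g^{-j}L_g^*R_{e,f}^k u$ and the factor $b^{-j}$ on the right can be moved outside $L_g^*$.

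Next I would expand the right-hand side. Applying the Leibniz rule to $d(\mu_e\wedge L_g^*b^{-j}R_{e,f}^k u)$, using that $\mu_e/\rho_g^j$ is a $(j-1)$-form, together with $dL_g^*R_{e,f}^k u = L_g^*dR_{e,f}^k u$ and the commuting relation \eqref{Rkd-delta} in the form $dR_{e,f}^k u = (-1)^j[R_{e,f}^{k+1}du + (\delta R^k)_{e,f}]$, the separate term $\mu_e\wedge L_g^*b^{-j}R_{e,f}^{k+1}du$ cancels the exterior-derivative contribution, leaving $d(\mu_e/\rho_g^j)\wedge L_g^*R_{e,f}^k u - (\mu_e/\rho_g^j)\wedge L_g^*(\delta R^k)_{e,f}$. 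I would then apply summation by parts \eqref{sum-by-parts} in the chain index to the $\delta R^k$ terms, combined with $\beta_{e'} = (\partial\mu)_{e'}$ from Corollary~\ref{mu-sum-up} (valid since the relevant degree $j-1 < n-m$), to rewrite $\sum_{e\in\Delta_j(f^*)}(\mu_e/\rho_g^j)\wedge L_g^*(\delta R^k)_{e,f} = \rho_g^{-j}\sum_{e'\in\Delta_{j-1}(f^*)}\beta_{e'}\wedge L_g^*R_{e',f}^k u$. Summing over $j$ and absorbing the explicit $\beta_e$-term at $j=n-m$ produces exactly $-\sum_{j=0}^{n-m}\rho_g^{-(j+1)}\sum_e\beta_e\wedge L_g^*R_{e,f}^k u$, so the right-hand side reduces to $\sum_{j=1}^{n-m}\sum_e d(\mu_e/\rho_g^j)\wedge L_g^*R_{e,f}^k u - \sum_{j=0}^{n-m}\rho_g^{-(j+1)}\sum_e\beta_e\wedge L_g^*R_{e,f}^k u$.

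Comparing this with $\Sigma$, the $\beta$-sums coincide and the two exterior-derivative sums differ only by the $j=0$ term. Here I would invoke the convention that $d=d_{-1}$ is the inclusion on constants, so that $d\mu_e = -1/|f^*|$ for $e=x_i\in\Delta_0(f^*)$; since $R_{x_i,f}^k = -A_{\langle x_i,f\rangle}^k$, the missing $j=0$ term equals $\tfrac{1}{|f^*|}\sum_{i\in I(f^*)}L_g^*A_{\langle x_i,f\rangle}^k u$. It then remains to identify this with $L_g^*A_f^k u$, which I expect to be the crux: I would establish the averaging identity $L_g^*A_f^k u = \tfrac{1}{|f^*|}\sum_{i\in I(f^*)}L_g^*A_{\langle x_i,f\rangle}^k u$ by noting that for $g\in\bar\Delta(f)$ the restriction of $G_f(y,\cdot)$ to $\S_g^c$ coincides with $G_g(y,\cdot)$ (the coordinates indexed by $I(f)\setminus I(g)$ vanish on $\S_g^c$ and $b_f$ restricts to $b_g$), whence $L_g^*G_f(y,\cdot)^* = L_g^*G_g(y,\cdot)^* = L_g^*G_{\langle x_i,f\rangle}(y,\cdot)^*$; averaging against the weights and using the recursion $z_f = \tfrac{1}{|f^*|}\sum_i z_{\langle x_i,f\rangle}$ from \eqref{def-zf} gives the claim. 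With this identity the leftover $j=0$ term cancels the subtracted $L_g^*A_f^k u$, and the lemma follows.

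The main obstacle is precisely this last step: recognizing that the spurious $-L_g^*A_f^k u$ is reconciled with the $j=0$ exterior-derivative term only through the averaging identity, whose proof rests on the compatibility of the maps $G_f$ under restriction to lower-dimensional standard simplices together with the defining recursion \eqref{def-zf} for the weights $z_f$. The remaining manipulations (the substitution \eqref{psi-id}, the Leibniz rule, the use of \eqref{Rkd-delta}, and summation by parts) are routine once the sign bookkeeping and the special role of $j=0$ are tracked carefully.
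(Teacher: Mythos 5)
Your proposal is correct and follows essentially the same route as the paper's own proof: substitute \eqref{psi-id} into the left-hand side, expand the right-hand side via the Leibniz rule and \eqref{Rkd-delta}, apply summation by parts \eqref{sum-by-parts} with $\beta_e = (\partial\mu)_e$ for $j<n-m$, and resolve the leftover $j=0$ term through $d\mu_e = -1/|f^*|$, $R_{x_i,f}^k = -A_{\langle x_i,f\rangle}^k$, and the recursion \eqref{def-zf}. The only difference is cosmetic: you spell out the averaging identity $L_g^*A_f^k u = \frac{1}{|f^*|}\sum_{i\in I(f^*)} L_g^*A_{\langle x_i,f\rangle}^k u$ via the compatibility of the maps $G_f$ under restriction to $\S_g^c$, a step the paper asserts tersely by citing \eqref{def-zf}.
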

The proof of the lemma above is partly technical.  Therefore, we delay
the proof, and we will first use the identity to prove the following
key result.

\begin{lem}\label{prop:K} Let $1 \le m \le n-1$ and assume
  $f \in \Delta_j(\T)$, $j=m,m-1$.  The function $K_{m,f}^ku$ has
   domain of dependence $\Omega_f$ and support on
  $\Omega_f$. Furthermore, the operator $K_{m,f}^k$ commutes with the
  exterior derivative and maps the spaces $\Lambda^k(\T_f)$,
  $\P_r^-\Lambda^k(\T_f)$, and $\P_r\Lambda^k(\T_f)$ to themselves.
\end{lem}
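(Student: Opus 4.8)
The plan is to treat the two cases $f\in\Delta_m$ and $f\in\Delta_{m-1}$ separately. When $f\in\Delta_m$ the operator is the simple alternating sum \eqref{def-Kmf}, and all four assertions were already recorded in the discussion following \eqref{def-Kmf}: each factor $L_g^*A_f^k$ has domain of dependence $\Omega_f$, commutes with $d$, and preserves piecewise smoothness and the polynomial spaces by the properties of $A_f^k$ collected in Section~\ref{sec:average}, while the support statement follows from the cancellation argument. So the real content is the case $f\in\Delta_{m-1}$. For this case I would first invoke the discussion preceding the lemma, where the domain of dependence and the support of $K_{m,f}^k u$ are both shown to be $\Omega_f$ via the cancellation identity $\tr_{\lambda_i=0}(K_{m,f,\<x_i,g\>}^k u-K_{m,f,g}^k u)=0$. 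Since the support is contained in $\Omega_f$, it suffices to analyze $K_{m,f,g}^k u$ on $\Omega_f$, where the alternative representations are available.

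The heart of the argument, and the step I expect to be the main obstacle, is to rewrite $K_{m,f,g}^k$ on $\Omega_f$ into a form free of genuine rational singularities. I would start from \eqref{K-form}--\eqref{def-K^k} and apply Lemma~\ref{lem:K} to its first part; this replaces it by the exact and $du$-terms $\sum_{j=1}^{n-m}\sum_e[d(\mu_e\wedge L_g^*b^{-j}R_{e,f}^k u)+\mu_e\wedge L_g^*b^{-j}R_{e,f}^{k+1}du]$ together with the single residual rational term $-\sum_{e\in\Delta_{n-m}(f^*)}\rho_g^{-(n-m+1)}\beta_e\wedge L_g^*R_{e,f}^k u$, which must be matched against the remaining $Q$-term of \eqref{def-K^k}. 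The key observation is that Lemma~\ref{tilde-R} descends from the pointwise tensor identity \eqref{F-def-special} in the $\Omega_f$-variable, so I may insert the scalar factor $\rho_g^{-(n-m+1)}$ into its first leg to obtain a $\rho_g$-weighted version of Lemma~\ref{tilde-R}. Feeding this into the residual term and applying the Leibniz rule to the $Q$-term, the two pieces containing $L_g^*dQ_{e,f}^k u$ cancel, and what survives is exactly $(-1)^{n-m}\sum_e[d(\phi_e\wedge L_g^*b^{-(n-m+1)}Q_{e,f}^k u)+\phi_e\wedge L_g^*b^{-(n-m+1)}Q_{e,f}^{k+1}du]$, where I have used $\rho_g^{-(j+1)}L_g^*=L_g^*b^{-(j+1)}$ (valid since $L_g^*b=\rho_g$) to absorb the apparent singularity into the polynomial form $b^{-(n-m+1)}Q_{e,f}^k u$ furnished by \eqref{tilde-R-prop}. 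Altogether this exhibits $K_{m,f,g}^k u$ on $\Omega_f$ as a sum of exact terms $d(\cdots)$ and of $du$-terms, with every $\mu_e,\phi_e$ a trimmed linear form and every $L_g^*b^{-\bullet}R$ or $L_g^*b^{-\bullet}Q$ a pullback of a smooth polynomial form on $\S_f^c$.

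From this representation the mapping properties follow readily. Piecewise smoothness is immediate: each summand is either a wedge of a Whitney form with an $L_g^*$-pullback of a smooth form on $\S_f^c$ — piecewise smooth by Lemma~\ref{lem:R-pol-prop}(i), \eqref{tilde-R-prop} and \eqref{L-star-prop} — or the exterior derivative of such a wedge, so $K_{m,f}^k(\Lambda^k(\T_f))\subset\Lambda^k(\T_f)$. For the polynomial spaces I would note that in each $du$-term the lowering of polynomial degree caused by $du$ compensates the degree raised by wedging with the trimmed linear factor, placing those terms directly in $\P_r\Lambda^k$ (resp. $\P_r^-\Lambda^k$), while the exact terms $d(\cdots)$ land in the same space because $d$ lowers polynomial degree by one. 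Here I would invoke Lemma~\ref{lem:R-pol-prop}(ii),(iii), the analogue \eqref{tilde-R-prop}, and the standard FEEC mapping properties of the wedge product and exterior derivative between the spaces $\P_r\Lambda$ and $\P_r^-\Lambda$. Summing over $g\in\bar\Delta(f)$ with the weights $(-1)^{|f|-|g|}$ and using $\supp K_{m,f}^k u\subset\Omega_f$ then gives the stated invariance.

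Finally, the commuting relation $dK_{m,f}^k=K_{m,f}^{k+1}d$ is a direct consequence of the same representation. Applying $d$ annihilates all the exact terms by $d^2=0$, leaving only $\sum_{j,e}d(\mu_e\wedge L_g^*b^{-j}R_{e,f}^{k+1}du)$ and $(-1)^{n-m}\sum_e d(\phi_e\wedge L_g^*b^{-(n-m+1)}Q_{e,f}^{k+1}du)$. On the other hand, writing out the same representation for $K_{m,f,g}^{k+1}$ applied to $du$, the would-be exact and lower-order contributions vanish because $d(du)=0$, so $K_{m,f,g}^{k+1}du$ reduces to precisely these two sums. Hence $dK_{m,f,g}^k u=K_{m,f,g}^{k+1}du$ on $\Omega_f$; summing over $g$ and using that both sides are supported in $\Omega_f$ yields $dK_{m,f}^k u=K_{m,f}^{k+1}du$ everywhere.
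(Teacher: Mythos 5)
Your proposal is correct and follows essentially the same route as the paper's proof: reduce to $f \in \Delta_{m-1}$ and to the domain $\Omega_f$ via the cancellation/support argument, apply Lemma~\ref{lem:K}, combine the residual $\beta_e$-term with the $Q$-term via Lemma~\ref{tilde-R} and the Leibniz rule to reach the representation of $K_{m,f,g}^k$ as a sum of exact terms and $du$-terms, from which commuting with $d$ and the space-preserving properties follow via Lemma~\ref{lem:R-pol-prop}, \eqref{tilde-R-prop}, \eqref{L-star-prop}, and the wedge-product fact $\P_1^-\Lambda^j \wedge \P_{r-1}^-\Lambda^{k-j} \subset \P_r^-\Lambda^k$. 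Your explicit justification of the $\rho_g^{-(j+1)}$-weighted form of Lemma~\ref{tilde-R} (inserting the scalar factor into the first leg of the tensor identity \eqref{F-def-special}) is a step the paper uses silently, and aside from a slight wording slip in the commuting argument (it is the terms involving $R^{k+2}d(du)$ and $Q^{k+2}d(du)$ that vanish, while the exact terms survive and match $dK_{m,f,g}^k u$), your argument agrees with the paper's.
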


\begin{proof} 
Following the discussion above, it remains to show that the operator
defined by \eqref{K-form} and \eqref{def-K^k} preserves piecewise
smoothness and, in particular, the piecewise polynomial spaces, and
that this operator commutes with the exterior derivative. In fact, we
will show that each of the operators $K_{m,f,g}^k$ has these
properties, where $ g \in \bar \Delta(f)$. Furthermore, due to the
support property of the functions $K_{m,f}^ku$ already verified, it is
enough to establish these properties on the domain $\Omega_f$.

It is a consequence of the identity of Lemma~\ref{lem:K} that the
operator $K_{m,f,g}^k$ admits the alternative representation
\begin{multline*}
    K_{m,f,g}^k u
    = \sum_{j=1}^{n-m} \sum_{e \in \Delta_j(f^*)}
    \Big(d(\mu_e\wedge L_g^*b^{-j}R_{e,f}^k u)
  + \mu_e \wedge L_g^*b^{-j}R_{e,f}^{k+1} du\Big)\\
 +\sum_{\substack{e  \in \Delta_{j}(f^*)\\j =n-m}} \Big((-1)^{j}
    \Big(d \frac{\phi_e}{\rho_g^{j+1}}\Big)
    \wedge L_g^* Q_{e,f}^ku
    - \frac{\beta_e}{\rho_g^{j+1}} \wedge L_g^*R_{e,f}^k u\Big).
\end{multline*}
However, by Lemma~\ref{tilde-R}, the two last terms can be rewritten
in the form
\begin{multline*}
     \sum_{\substack{e  \in \Delta_{j}(f^*)\\j =n-m}} \Big[(-1)^{j}
     \Big(d \frac{\phi_e}{\rho_g^{j+1}}\Big)  \wedge L_g^* Q_{e,f}^ku
     -  \frac{\phi_e}{\rho_g^{j+1}} \wedge L_g^*\Big((-1)^{j+1}
   Q_{e,f}^{k+1}du - dQ_{e,f}^{k} u\Big)\Big]\\
   = \sum_{\substack{e  \in \Delta_{j}(f^*)\\j =n-m}} (-1)^{j}
   \Big(d\Big(\frac{\phi_e}{\rho_g^{j+1}} \wedge L_g^* Q_{e,f}^ku \Big)
   +\frac{\phi_e}{\rho_g^{j+1}} \wedge L_g^*  Q_{e,f}^{k+1} du \Big).
\end{multline*}
As a consequence, it follows that the operator $K_{m,f,g}^k$ can be
expressed as
\begin{multline*}
%\label{alt-def-K^k}
K_{m,f,g}^k u =  
\sum_{j=1}^{n-m} \sum_{e \in \Delta_j(f^*)}\Big(d(\mu_e(f)
\wedge L_g^*b^{-j}R_{e,f}^k u)
  + \mu_e(f) \wedge L_g^*b^{-j}R_{e,f}^{k+1} du\Big)\\
   + \sum_{\substack{e  \in \Delta_{j}(f^*)\\j =n-m}} (-1)^{j}
    \Big(d\Big(\phi_e   \wedge L_g^* b^{-(j+1)}Q_{e,f}^ku \Big)
   +\phi_e \wedge L_g^*b^{-(j+1)}  Q_{e,f}^{k+1} du\Big).
   \end{multline*}
From this representation, commuting with the exterior derivative is
obvious, and the space preserving properties follow from
\eqref{L-star-prop} and the space preserving properties of the
operators $R_{e,f}^k$ and $Q_{e,f}^k$ given in
Lemma~\ref{lem:R-pol-prop} and \eqref{tilde-R-prop}.  In particular,
for the trimmed piecewise polynomial spaces, we use the fact that
$\P_1^-\Lambda^j \wedge \P_{r-1}^-\Lambda^{k-j} \subset
\P_{r}^-\Lambda^{k}$, cf. \cite[Section 3.3]{acta}.
\end{proof}

Next, we will establish Lemma~\ref{lem:K}.
\begin{proof} (of Lemma~\ref{lem:K})
 From the identity \eqref{psi-id}, we obtain
 \begin{multline*}
   %\label{middle-term}
\sum_{j=0}^{n-m}\sum_{e  \in \Delta_j(f^*)} \frac{(-1)^{j-1}}{\rho_g^{j+1}}  
\Big(\phi_e + \psi_{e,g}\Big) \wedge L_g^* R_{e,f}^k u
\\
= \sum_{j=0}^{n-m}\sum_{e  \in \Delta_j(f^*)}
\Big[d\Big(\frac{\mu_e}{\rho_g^j}\Big)
  - \frac{\beta_e}{\rho_g^{j+1}}\Big] \wedge L_g^*R_{e,f}^k u
\\
  = \sum_{\substack{e  \in \Delta_{j}(f^*)\\j= n-m}}\Big[d\Big(\frac{\mu_e}{\rho_g^j}\Big)
    - \frac{\beta_e}{\rho_g^{j+1}}\Big] \wedge L_g^*R_{e,f}^k u
  +  \sum_{j=0}^{n-m-1}    \sum_{e  \in \Delta_j(f^*)} 
  \Big[d\Big(\frac{\mu_e}{\rho_g^j}\Big)
    - \frac{(\partial \mu)_e}{\rho_g^{j+1}}\Big] \wedge L_g^*R_{e,f}^k u.
  \end{multline*}
where we have used the identity $\beta_e = (\partial \mu)_e$ from
Lemma~\ref{lem:mu-1}, and where $\partial = \partial(f^*)$. Recall
also that for $e \in \Delta_0(f^*)$, we obtain from \eqref{def-zf}
that
\[
  \sum_{e \in \Delta_0(f^*)}d\mu_e \wedge L_g^*R_{e,f}^k u  = L_g^*A_f^k u,
\]
since $d\mu_e = -1/|f^*|$ and $R_{e,f}^k u = - A_{\<e,f\>}^k$.  Hence,
the left hand side of \eqref{ident-K} is given by
\begin{multline}
\label{lhs-rewrite}
\sum_{\substack{e  \in \Delta_{j}(f^*)\\j= n-m}}\Big[d\Big(\frac{\mu_e}{\rho_g^j}\Big)
    - \frac{\beta_e}{\rho_g^{j+1}}\Big] \wedge L_g^*R_{e,f}^k u
-\sum_{e  \in \Delta_0(f^*)} \frac{(\partial \mu)_e}{\rho_g} \wedge L_g^*R_{e,f}^k u
\\
+ \sum_{j=1}^{n-m-1}    \sum_{e  \in \Delta_j(f^*)} 
  \Big[d\Big(\frac{\mu_e}{\rho_g^j}\Big)
    - \frac{(\partial \mu)_e}{\rho_g^{j+1}}\Big] \wedge L_g^*R_{e,f}^k u.
\end{multline}
On the other hand, it follows from \eqref{Rkd-delta} and the Leibniz
rule for the wedge product, that
\begin{multline*}
  d(\mu_e\wedge L_g^*b^{-j}R_{e,f}^k u)
  + \mu_e \wedge L_g^*b^{-j}R_{e,f}^{k+1} du
\\
=  \Big(d\frac{\mu_e}{\rho_g^j}\Big)\wedge L_g^*R_{e,f}^k u
+ \mu_e \wedge L_g^*b^{-j}((-1)^{j-1}dR_{e,f}^k u + R_{e,f}^{k+1} du)
\\
= \Big(d\frac{\mu_e}{\rho_g^j}\Big)\wedge L_g^*R_{e,f}^k u
- \mu_e \wedge L_g^*b^{-j}(\delta R^ku)_{e,f},
\end{multline*}
for $e \in \Delta_j(f^*)$, $0 \le j \le n-m$.  As a consequence, the
right hand side of \eqref{ident-K} is given by
\begin{multline*}
= \sum_{j=1}^{n-m-1} \sum_{e \in \Delta_j(f^*)}
\Big(d\frac{\mu_e}{\rho_g^j}\Big)\wedge L_g^*R_{e,f}^k u
+ \sum_{\substack{e \in \Delta_j(f^*)\\j= n-m}}\Big[\Big(d\frac{\mu_e}{\rho_g^{j}}\Big)
  - \frac{\beta_e}{\rho_g^{j+1}}\Big] \wedge L_g^*R_{e,f}^k u
\\
- \sum_{j=0}^{n-m-1} \sum_{e \in \Delta_j(f^*)}
\partial \mu_e \wedge L_g^*b^{-(j+1)}( R^ku)_{e,f},
\end{multline*}
where we have used \eqref{sum-by-parts} to rewrite the last term.  
 If we combine terms and compare this with
  \eqref{lhs-rewrite}, we obtain \eqref{ident-K}.
\end{proof}

The definitions of the operators $\{K_{m,f}^k\}$, given above,
combined with the operators $\{W^k\}$ introduced in
Section~\ref{sec:outline}, complete the construction of all the
operators required for the decomposition \eqref{main-decomp},
cf. \eqref{def-B} and \eqref{def-B-n}.
 
\begin{prop}\label{prop:B} 
 The operators $\{B_f^k\}$, defined by \eqref{def-B} and
 \eqref{def-B-n}, have domain of dependence $\Omega_f$ if $f \in
 \Delta_m$, $ m<n$, and $\Omega_f^E$ if $m=n$. Furthermore, the
 functions $\B_f^ku$ have support in $\Omega_f$.
\end{prop}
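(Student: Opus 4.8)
The plan is to handle the two regimes $m<n$ and $m=n$ separately, since the operators $B_f^k$ are built by genuinely different recipes in the two cases, cf. \eqref{def-B} and \eqref{def-B-n}, and only the second case carries real content.

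First I would dispatch $f\in\Delta_m$ with $m<n$. Here $B_f^k=K_{m,f}^k+K_{m+1,f}^k$ (with the convention $K_{n,\cdot}^k=0$), so both assertions reduce to the corresponding properties of the two summands, which are already in hand. For the operator $K_{m,f}^k$ with $f\in\Delta_m$, the discussion following \eqref{def-Kmf} shows it has domain of dependence $\Omega_f$ and support in $\Omega_f$; for $K_{m+1,f}^k$ with $f\in\Delta_m=\Delta_{(m+1)-1}$ these same two properties are exactly the content of Lemma~\ref{prop:K}. The extreme cases are handled by the same bookkeeping: for $m=0$ one also invokes the elementary operator $K_{0,f}^k=L_f^*A_f^k-L_\emptyset^*A_f^k$ from Section~\ref{sec:outline}, and for $m=n-1$ the convention $K_{n,\cdot}^k=0$ leaves only $K_{n-1,f}^k$. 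Since domain of dependence and support are both preserved under finite sums, $B_f^k$ inherits $\Omega_f$ in each case.

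The substance of the proof is the case $f\in\Delta_n$, where $\Omega_f=f$ and $B_f^ku=\tr_f(u-C_{n-1}^ku)$. The support statement is immediate from the trace-preserving property \eqref{trace-preserve}: since $\tr_g C_{n-1}^k u=\tr_g u$ for every $g\in\Delta_{n-1}$, the form $u-C_{n-1}^k u$ has vanishing trace on all $(n-1)$-faces of $f$, so $\tr_f(u-C_{n-1}^ku)$ vanishes on $\partial f$ and $B_f^k u\in\0\Lambda^k(\T_f)$ has support in $f=\Omega_f$; this is precisely the observation already recorded in \eqref{pre-decomp}. For the domain of dependence I would use the expanded form in \eqref{def-B-n},
\[
B_f^k u=\tr_f u-\tr_f W^k u-\sum_{\substack{g\in\Delta_m\\0\le m\le n-1}}\tr_f B_g^k u,
\]
and bound the three contributions. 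The term $\tr_f u$ sees only $u|_f\subset u|_{\Omega_f^E}$. In $\tr_f W^k u$ only the $k$-subsimplices $e\subset f$ survive, since $\phi_e|_f=0$ otherwise, and each such coefficient is $\int_\Omega u\wedge z_{e,\emptyset}$ with $z_{e,\emptyset}$ supported in $\Omega_e^E\subset\Omega_f^E$ (because $I(e)\subset I(f)$). Finally $\tr_f B_g^k u=0$ unless $f\subset\Omega_g$, i.e. unless $g$ is a subsimplex of $f$, and for such a $g$ the first part of the proposition gives domain of dependence $\Omega_g$, while $\Omega_g\subset\bigcup_{i\in I(g)}\Omega_{x_i}\subset\Omega_f^E$ since $I(g)\subset I(f)$. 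Collecting the three estimates yields that $B_f^k$ depends only on $u|_{\Omega_f^E}$.

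The one point requiring care — and the closest thing to an obstacle — is this last bookkeeping step: one must verify that every simplex $g$ (resp. $e$) actually contributing to the trace on $f$ is a subsimplex of $f$, and that the associated data sets $\Omega_g$ and $\Omega_e^E$ all lie inside $\Omega_f^E$. Both reductions hinge on the elementary containment $I(g)\subset I(f)\Rightarrow\Omega_g\subset\Omega_f^E$, together with the fact that the properties of the lower-dimensional bubbles $B_g^k$ with $m<n$ are already available from the first part; in particular the recursive definition \eqref{def-B-n} is not circular, since it refers only to bubbles of strictly smaller simplex dimension.
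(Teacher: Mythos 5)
Your proof is correct and follows essentially the same route as the paper: the case $f\in\Delta_m$, $m<n$, is reduced to the properties of the operators $K_{m,f}^k$ from Lemma~\ref{prop:K}, and for $f\in\Delta_n$ the domain of dependence is read off from \eqref{def-B-n} via the inclusion $\big(\bigcup_{e\in\Delta_k(f)}\Omega_e^E\big)\cup\big(\bigcup_{g\in\Delta(f)}\Omega_g\big)\subset\Omega_f^E$, exactly as in the paper. You merely spell out details the paper leaves implicit (why only $e,g\in\Delta(f)$ contribute to the trace on $f$, the separate treatment of $K_{0,f}^k$, and the non-circularity of \eqref{def-B-n}), all of which are accurate.
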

 
\begin{proof}
All these properties follow directly from the corresponding properties
of the operators $\{K_{m,f}\}$ given in Lemma~\ref{prop:K}, except for
the domain of dependence property in the case $f \in
\Delta_n$. However, it is a consequence \eqref{def-B-n} that in this
case the operator $B_f^k$ has a domain of dependence included in
\[
\big(\bigcup_{e \in \Delta_k(f)} \Omega_e^E \big) \cup
\big(\bigcup_{g \in \Delta(f)}\Omega_g\big) \subset \Omega_f^E.
\]
\end{proof}

To complete the proof of the main theorem of the paper,
Theorem~\ref{thm:main}, it remains to verify the identity
\eqref{Cmk-ident} and to establish the desired operator bounds. This
will be done in the two next sections.

\section{Verifying the fundamental identity}
\label{sec:fund-ident}
The purpose of this section is to establish the fundamental identity
\eqref{Cmk-ident}, i.e.,
\[
  C_m^ku- \sum_{\substack{f \in \Delta_j\\j= m,m-1}} K_{m,f}^ku = C_{m-1}^k u,
   \quad 1 \le m \le n-1.
\]
Therefore, we have to study sums of the operators $K_{m,f}^k$
introduced in the previous section.  As a preliminary step, we study
sums of expressions corresponding to a part of the definition
\eqref{def-K^k}.
  
\begin{lem} Assume that $1 \le m \le n-1$. Then
\begin{multline}\label{Cmk-ident-pre1}
   \sum_{\substack{(e,f)  \in \Delta_{j.m-1}\\0 \le j \le n-m}}(-1)^{j-1}
    \sum_{g \in \bar \Delta(f)}(-1)^{|f| -|g|}
  \frac{\psi_{e,g}(f)}{\rho_g}  \wedge L_g^*b^{-j}R_{e,f}^k u\\
 = - \sum_{\substack{(e,f)  \in \Delta_{j.m-2}\\0 \le j \le n-m}}(-1)^{j-1} 
 \sum_{g \in \bar \Delta(f)} (-1)^{|f| -|g|}
  \frac{\phi_e}{\rho_g} \wedge L_g^* b^{-j} R_{e,f}^k u.
\end{multline}
\end{lem}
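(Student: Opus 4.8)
The plan is to establish \eqref{Cmk-ident-pre1} by interchanging the order of summation so that the subsimplex $g$ becomes the outermost index, and then recognizing that, for each fixed $g$, the resulting inner sum is exactly the left-hand side of the identity \eqref{residual-idRg} of Lemma~\ref{lem:R-psi-prop} applied at level $m-1$. First I would rewrite the left-hand side of \eqref{Cmk-ident-pre1} as a sum over $g\in\bar\Delta$, collecting for each $g$ all pairs $(e,f)\in\Delta_{j,m-1}$ with $f\supset g$. Since $\rho_g=L_g^*b$, we have $L_g^*b^{-j}R_{e,f}^ku=\rho_g^{-j}L_g^*R_{e,f}^ku$, so that
\[
\frac{\psi_{e,g}(f)}{\rho_g}\wedge L_g^*b^{-j}R_{e,f}^ku
=\rho_g^{-(j+1)}\,\psi_{e,g}(f)\wedge L_g^*R_{e,f}^ku .
\]

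The key observation is that for $f\in\Delta_{m-1}$ one has $|f|=m$, so the sign $(-1)^{|f|-|g|}=(-1)^{m-|g|}$ is independent of the particular $f$, and so is the scalar $\rho_g^{-(j+1)}$. Hence, for fixed $g$ and $j$, all of these factors may be pulled outside the sum over the pairs $(e,f)$ with $f\supset g$, and the left-hand side of \eqref{Cmk-ident-pre1} becomes
\[
\sum_{g}\sum_{0\le j\le n-m}(-1)^{j-1}(-1)^{m-|g|}\rho_g^{-(j+1)}
\sum_{\substack{(e,f)\in\Delta_{j,m-1}\\ f\supset g}}\psi_{e,g}(f)\wedge L_g^*R_{e,f}^ku .
\]

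Before invoking Lemma~\ref{lem:R-psi-prop}, I would dispose of the boundary contribution $g=f$. Since the inner sum runs over $f\in\Delta_{m-1}$, the case $\dim g=m-1$ forces $g=f$, and then $\psi_{e,f}(f)=0$, as noted below \eqref{def-psi}; thus these terms vanish and only simplices with $\dim g\le m-2$ survive. This is precisely the range $-1\le\dim g\le(m-1)-1$ required by the hypothesis of Lemma~\ref{lem:R-psi-prop} when it is applied with $m$ replaced by $m-1$, so the application is legitimate. Replacing $m$ by $m-1$ in \eqref{residual-idRg} yields
\[
\sum_{\substack{(e,f)\in\Delta_{j,m-1}\\ f\supset g}}\psi_{e,g}(f)\wedge L_g^*R_{e,f}^ku
=\sum_{\substack{(e,f)\in\Delta_{j,m-2}\\ f\supset g}}\phi_e\wedge L_g^*R_{e,f}^ku ,
\]
and the index range $0\le j\le n-m=n-(m-1)-1$ is consistent with that hypothesis.

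Finally I would reassemble the result into the form of the right-hand side of \eqref{Cmk-ident-pre1}. Substituting the last identity and reintroducing the scalar factors through $\rho_g^{-(j+1)}\phi_e\wedge L_g^*R_{e,f}^ku=\frac{\phi_e}{\rho_g}\wedge L_g^*b^{-j}R_{e,f}^ku$ produces a sum over pairs $(e,f)\in\Delta_{j,m-2}$ with $f\supset g$, i.e.\ over $g\in\bar\Delta(f)$. The one point demanding care is the sign: on the right-hand side of \eqref{Cmk-ident-pre1} one has $f\in\Delta_{m-2}$, hence $|f|=m-1$ and $(-1)^{|f|-|g|}=(-1)^{m-1-|g|}=-(-1)^{m-|g|}$, so that the sign $(-1)^{m-|g|}$ produced by the interchange, together with the outer factor $(-1)^{|f|-|g|}$ on the right-hand side, accounts exactly for the global minus sign in \eqref{Cmk-ident-pre1}. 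I expect this sign bookkeeping, together with the verification that the $g=f$ term drops so that the hypothesis of Lemma~\ref{lem:R-psi-prop} is met, to be the only delicate points; the remainder is a routine reorganization of finite sums.
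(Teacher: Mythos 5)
Your proposal is correct and follows essentially the same route as the paper's own proof: drop the $g=f$ contributions using $\psi_{e,f}(f)=0$, interchange the order of summation so that $g$ is outermost, apply the identity \eqref{residual-idRg} of Lemma~\ref{lem:R-psi-prop} at level $m-1$, and reassemble. Your explicit verification of the sign bookkeeping, via $|f|=m$ on the left versus $|f|=m-1$ on the right so that $(-1)^{|f|-|g|}=-(-1)^{m-|g|}$, is exactly the step the paper leaves implicit in asserting that the transformed sum ``corresponds exactly'' to the right-hand side.
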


\begin{proof}
Recall from \eqref{def-psi} that $\psi_{e,g}(f) = 0$ if $g=
f$. Therefore, by changing the order of summation and then using
\eqref{residual-idRg}, the left hand side of \eqref{Cmk-ident-pre1}
can be rewritten as
\begin{multline*}
 \sum_{\substack{g \in \Delta_s \\ -1 \le s \le m-2}} (-1)^{m -|g|}
  \sum_{j=0}^{n-m}   \frac{(-1)^{j-1}}{\rho_g^{j+1}}
  \sum_{\substack{(e,f) \in \Delta_{j,m-1} \\ f \supset g}}
 \psi_{e,g}(f)  \wedge L_g^*R_{e,f}^k u
  \\
  = \sum_{\substack{g \in \Delta_s \\ -1 \le s \le m-2}} (-1)^{m -|g|}
  \sum_{j=0}^{n-m}   (-1)^{j-1}
  \sum_{\substack{(e,f) \in \Delta_{j,m-2} \\ f \supset g}}
  \frac{\phi_e}{\rho_g} \wedge L_g^* b^{-j} R_{e,f}^k u,
\end{multline*}
where the right hand side corresponds exactly to the right hand side
of \eqref{Cmk-ident-pre1}.
\end{proof}

Next, we consider a corresponding sum of the last term of the
definition \eqref{def-K^k}.

\begin{lem} Assume that $1 \le m \le n-1$.  Then
\begin{multline}\label{Cmk-ident-pre2}
    \sum_{\substack{(e,f) \in \Delta_{j,m-1}\\j=n-m}}
   \sum_{g \in \bar \Delta(f)} (-1)^{|f|-|g|}
  \Big(d \frac{\phi_e}{\rho_g^{j+1}}\Big)
     \wedge L_g^*Q^k_{e,f} u
\\
     =-  \sum_{\substack{(e,f) \in \Delta_{j,m-2}\\ j = n-m+1}}  
     \sum_{g \in \bar \Delta(f)} (-1)^{|f|-|g|}
     \frac{\phi_e}{\rho_g} \wedge   L_g^* b^{-j} R_{e,f}^k u.
\end{multline}
\end{lem}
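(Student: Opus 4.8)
The plan is to establish \eqref{Cmk-ident-pre2} by rewriting its right-hand side, in the spirit of the proof of Lemma~\ref{psi-prop}. The pairs on the right belong to $\Delta_{n-m+1,m-2}$, while those on the left belong to $\Delta_{n-m,m-1}$, and the bridge between the two levels is the relation $R_{e,f}^k u=(\delta^+ Q^k u)_{e,f}$ from \eqref{Requiv}. First I would substitute this relation into the right-hand side, replacing each $R_{e',f'}^k u$ with $(e',f')\in\Delta_{n-m+1,m-2}$ by $\sum_{i\in I(e')}(-1)^{\sigma_{e'}(x_i)}Q_{e'(\hat x_i),\langle x_i,f'\rangle}^k u$. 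Since $L_g^* b=\rho_g$ for every $g\in\bar\Delta(f')$, the weight $L_g^* b^{-(n-m+1)}$ pulls back to the scalar factor $\rho_g^{-(n-m+1)}$, and each resulting summand carries the operator $L_g^* Q_{e,f}^k u$ with $(e,f)=(e'(\hat x_i),\langle x_i,f'\rangle)\in\Delta_{n-m,m-1}$, precisely the operators appearing on the left.

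Next I would carry out the reindexing $(e',f',i)\mapsto(e,f)=(e'(\hat x_i),\langle x_i,f'\rangle)$, which is the same bijection used in the proof of Lemma~\ref{psi-prop}: for fixed $(e,f)$ and $g$ the admissible $i$ are exactly those in $I(f\cap g^*)$, and the constraint $g\in\bar\Delta(f')$ becomes $g\in\bar\Delta(f)$ with $x_i\notin I(g)$. Tracking signs, $|f'|=|f|-1$ gives $(-1)^{|f'|-|g|}=-(-1)^{|f|-|g|}$, while $(-1)^{\sigma_{e'}(x_i)}\phi_{e'}=\phi_{[x_i,e]}$ because the Whitney form changes sign under reordering of its vertices. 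Together with the overall minus sign, the right-hand side becomes
\[
\sum_{(e,f)\in\Delta_{n-m,m-1}}\ \sum_{g\in\bar\Delta(f)}(-1)^{|f|-|g|}\,
\frac{1}{\rho_g^{n-m+2}}\Big(\sum_{i\in I(f\cap g^*)}\phi_{[x_i,e]}\Big)\wedge L_g^*Q_{e,f}^k u .
\]
Comparing with the left-hand side, whose $(e,f,g)$-pairing is identical, the lemma reduces to the pointwise identity
\[
\rho_g^{n-m+2}\,d\Big(\frac{\phi_e}{\rho_g^{n-m+1}}\Big)=\sum_{i\in I(f\cap g^*)}\phi_{[x_i,e]}
\qquad\text{on }\Omega_f,
\]
valid for every $(e,f)\in\Delta_{n-m,m-1}$ and $g\in\bar\Delta(f)$.

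To prove this identity I would expand both sides. The Leibniz rule gives $\rho_g^{n-m+2}d(\phi_e/\rho_g^{n-m+1})=\rho_g\,d\phi_e-(n-m+1)\,d\rho_g\wedge\phi_e$. For the right-hand side I would apply \eqref{phi-xf}, namely $\phi_{[x_i,e]}=(\lambda_i d-(n-m+1)\,d\lambda_i\wedge)\phi_e$ for $e\in\Delta_{n-m}$, together with $\sum_{i\in I(f\cap g^*)}\lambda_i=\rho_g-\rho_f$ and $\rho_f=\sum_{i\in I(f^*)}\lambda_i$ on $\Omega_f$. Subtracting, the difference of the two sides equals $\rho_f\,d\phi_e-(n-m+1)\,d\rho_f\wedge\phi_e=\sum_{i\in I(f^*)}(\lambda_i d-(n-m+1)\,d\lambda_i\wedge)\phi_e$. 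The key point is that this vanishes on $\Omega_f$: because $e$ is top-dimensional in $f^*$, the only $n$-simplex of $\Omega_f$ containing $e$ is $T=\langle f,e\rangle$, so $\phi_e$ restricted to $\Omega_f$ is supported on $T$; and for $i\in I(f^*)$ the vertex $x_i$ lies outside $T$, whence $\lambda_i$ and $d\lambda_i$ vanish identically on $T$. Each summand therefore vanishes, establishing the identity (and, as a byproduct, that the $g=f$ term on the left vanishes, consistent with its absence on the right).

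The step I expect to be the main obstacle is this final vanishing argument — recognizing that $\phi_e|_{\Omega_f}$ is supported on the single simplex $\langle f,e\rangle$ and that the link vertices $x_i$, $i\in I(f^*)$, contribute nothing — together with the careful sign and $\rho_g$-power bookkeeping in the reindexing. In particular one must keep straight that, although $Q_{e,f}^k$ and $R_{e',f'}^k$ both reduce the form degree by $n-m+1$, the passage $R_{e',f'}^k=\delta^+Q^k$ converts the single factor $\phi_{e'}/\rho_g$ into the partial coboundary sum $\sum_{i\in I(f\cap g^*)}\phi_{[x_i,e]}/\rho_g^{n-m+2}$, which is exactly what reassembles into the exterior derivative $d(\phi_e/\rho_g^{n-m+1})$ on the left once the link-vertex contributions are discarded.
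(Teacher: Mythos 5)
Your argument is essentially the paper's own proof run in reverse: the paper starts from the left-hand side of \eqref{Cmk-ident-pre2}, expands $d(\phi_e/\rho_g^{n-m+1})$ on $\Omega_f$ as $\rho_g^{-(n-m+2)}\sum_{i\in I(f\cap g^*)}\phi_{[x_i,e]}$, reindexes, and reassembles the resulting inner sum over $i\in I(e)$ into $(\delta^+ Q^k u)_{e,f}=R^k_{e,f}u$ via \eqref{Requiv}, whereas you substitute $R=\delta^+Q$ on the right and reassemble the exterior derivative; your reindexing bijection and sign bookkeeping agree with the paper's. However, your final vanishing argument contains a genuine error. You must show that $\sum_{i\in I(f^*)}(\lambda_i d-(n-m+1)\,d\lambda_i\wedge)\phi_e$ vanishes on $\Omega_f$, and you justify this by asserting that every vertex $x_i$ with $i\in I(f^*)$ lies outside $T=\langle f,e\rangle$. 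That is false precisely for $i\in I(e)\subset I(f^*)$: the vertices of $e$ are vertices of $T$, and their $\lambda_i$, $d\lambda_i$ certainly do not vanish there. Those $n-m+1$ summands do vanish, but for a different reason: when $x_i\in e$, the simplex $[x_i,e]$ is degenerate and one has the identity $\lambda_i\, d\phi_e=(n-m+1)\, d\lambda_i\wedge\phi_e$ (immediate from the Whitney form formula and $d\phi_e=(n-m+1)!\, d\lambda_{j_0}\wedge\cdots\wedge d\lambda_{j_{n-m}}$), which is exactly the convention under which \eqref{phi-xf} gives $\phi_{[x_i,e]}=0$ for a repeated vertex. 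Your geometric support argument is valid only for $i\in I(f^*)\setminus I(e)$; with this one-line repair the pointwise identity is correct, and it is the same identity on which the paper's proof rests.

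A second, smaller gap concerns the reduction itself: the pointwise identity holds only on $\Omega_f$, yet you compare the two sides term by term in $(e,f,g)$ as forms on all of $\Omega$. For a fixed $g\in\bar\Delta(f)$ with $g\neq f$, the individual terms do not vanish on $\Omega\setminus\Omega_f$, since $\phi_e$ is supported on $\Omega_e$, which is strictly larger than $\Omega_{\langle e,f\rangle}$. The correct closing step is to fix $(e,f)$, sum over all $g\in\bar\Delta(f)$, and invoke the standard cancellation argument (cf. Section~\ref{sec:scalar}) to conclude that both sides of the resulting $(e,f)$-identity vanish on $\Omega\setminus\Omega_f$, so that it suffices to verify equality on $\Omega_f$ by your pointwise formula; this is precisely how the paper's proof handles the same point. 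Both defects are local and repairable, and apart from them your proposal reproduces the paper's argument, merely traversed from right to left.
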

\begin{proof}
Let $f \in \Delta_{m-1}$ and $g \in \bar \Delta(f)$ be fixed.  Since
$\Omega_f \subset \Omega_g$, it follows that $\rho_g = \sum_{i \in
  I(g^*)} \lambda_i$ on $\Omega_f$.  Since $\dim f^* = n-m$ and $e \in
\Delta_{n-m}(f^*)$, $\phi_{[x_i,e]} =0$ unless $i \in I(f)$ and so we
have
  \[
 d\Big(\frac{\phi_e}{\rho_g^{n-m+1}}\Big) = \frac{1}{\rho_g^{n-m+2}}
 \sum_{i \in I(f \cap g^*)}\phi_{[x_i,e]},\quad \text{on } \Omega_f.
 \] 
As a consequence, when we restrict to $\Omega_f$, we can conclude that
\begin{equation*}
\sum_{\substack{e  \in \Delta_{j} (f^*)\\ j= n-m}}
\Big(d\frac{\phi_e}{\rho_g^{j+1}}\Big) \wedge L_g^* Q_{e,f}^k u
=   \sum_{i \in I(f \cap g^*)}
\sum_{\substack{e  \in \Delta_{j} (f(\hat x_i)^*)\\ e \supset x_i\\j =n-m+1}} 
 (-1)^{\sigma_{e(x_i)}}\frac{\phi_{e} }{\rho_g} \wedge
 L_g^* b^{-j}  Q_{e(\hat x_i),f}^k u.
\end{equation*}
Next, if we sum over all $g \in \bar \Delta(f)$ we obtain
\begin{multline}\label{Cmk-ident-pre3}
  \sum_{g \in \bar \Delta(f)} (-1)^{|f|-|g|}
 \sum_{\substack{e  \in \Delta_{j} (f^*)\\ j= n-m}}
\Big(d\frac{\phi_e}{\rho_g^{j+1}}\Big) \wedge L_g^* Q_{e,f}^k u\\
= \sum_{g \in \bar \Delta(f)} (-1)^{|f|-|g|}  \sum_{i \in I(f \cap g^*)}
\sum_{\substack{e  \in \Delta_{j} (f(\hat x_i)^*)\\ e \supset x_i\\j =n-m+1}} 
 (-1)^{\sigma_{e(x_i)}}\frac{\phi_{e} }{\rho_g} \wedge
 L_g^* b^{-j}  Q_{e(\hat x_i),f}^k u.
\end{multline}
This identity obviously holds on $\Omega_f$, and by the cancellation
argument, it also holds on $\Omega \setminus \Omega_f$, since both
sides of the identity vanish there. Furthermore, if we sum
\eqref{Cmk-ident-pre3} over all $f \in \Delta_{m-1}$, and use the fact
that $f = \<x_i,f(\hat x_i)\>$ for $i \in I(f)$, we obtain that the
left hand side of \eqref{Cmk-ident-pre2} can be expressed as
\begin{align*}
&\sum_{\substack{g \in  \Delta_s\\-1 \le s \le m-2}}\hskip -8pt (-1)^{m-|g|}
\sum_{\substack{f \in \Delta_{m-1}\\ f \supset g}}
\sum_{i \in I(f \cap g^*)}
\sum_{\substack{e  \in \Delta_{j}(f(\hat x_i)^*)\\ e \supset x_i\\j =n-m+1}} 
(-1)^{\sigma_{e(x_i)}}\frac{\phi_{e} }{\rho_g}
\wedge  L_g^* b^{-j}  Q_{e(\hat x_i),f}^k u\\
 &=  \sum_{\substack{g \in  \Delta_s\\-1 \le s \le m-2}}\hskip -8pt (-1)^{m-|g|}
 \sum_{\substack{f \in \Delta_{m-2}\\ f \supset g}}
 \sum_{\substack{e  \in \Delta_{j} (f^*)\\j =n-m+1}} 
 \frac{\phi_{e} }{\rho_g}  \wedge  L_g^* b^{-j}
\sum_{i \in I(e)}(-1)^{\sigma_e(x_i)}Q_{e(\hat x_i),\<x_i,f\>}^k u\\
&=- \sum_{\substack{(e,f) \in \Delta_{j,m-2}\\ j = n-m+1}}  
     \sum_{g \in \bar \Delta(f)} (-1)^{|f|-|g|}
     \frac{\phi_e}{\rho_g} \wedge   L_g^* b^{-j} (\delta^+ Q^k u)_{e,f},
      \end{align*}
 where we have used the fact that for $g$ fixed, we have 
 \[
 \sum_{\substack{f \in \Delta_{m-1}\\ f \supset g}}
 \sum_{i \in I(f \cap g^*)}\sum_{\substack{e  \in \Delta_{j} (f(\hat x_i)^*)\\ e \supset x_i}}
 %=\sum_{i \in I(g^*)}\sum_{\substack{f \in \Delta_{m-1}\\ f \supset \{x_i,g\}}}
 %\sum_{\substack{e  \in \Delta_{j} (f(\hat x_i)^*)\\ e \supset x_i}} \\
 = \sum_{\substack{f \in \Delta_{m-2}\\ f \supset g}} \sum_{e  \in \Delta_{j} (f^*)}
 \sum_{i \in I(e)}.
  \]
However, by \eqref{Requiv}, the final term above is exactly the right
hand side of \eqref{Cmk-ident-pre2}.
\end{proof}

The main result of this section now follows from the two lemmas above.

\begin{prop} \label{prop:Cmk-ident}
Let $1 \le m \le n-1$. Then the identity \eqref{Cmk-ident} holds.
\end{prop}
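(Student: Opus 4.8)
The plan is to prove \eqref{Cmk-ident} by a direct bookkeeping computation: I would compare the explicit formula \eqref{Cm-rewritten} for $C_m^k$ with the three groups of terms that make up $\sum_{f \in \Delta_{m-1}} K_{m,f}^k$ (read off from \eqref{K-form} and \eqref{def-K^k}), and then invoke the two lemmas just proved (identities \eqref{Cmk-ident-pre1} and \eqref{Cmk-ident-pre2}) to handle the two nontrivial groups. The first step is to peel off the top-dimensional simplices. Comparing \eqref{def-Kmf} with the first sum in \eqref{Cm-rewritten} shows immediately that $C_m^k u - \sum_{f \in \Delta_m} K_{m,f}^k u$ equals precisely the order-reduction sum $S := \sum_{(e,f)\in\Delta_{j,m-1},\,0\le j\le n-m}(-1)^{j-1}\sum_{g\in\bar\Delta(f)}(-1)^{|f|-|g|}\,\frac{\phi_e}{\rho_g}\wedge L_g^*b^{-j}R_{e,f}^k u$. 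It then remains to subtract $\sum_{f\in\Delta_{m-1}}K_{m,f}^k u$ and identify the remainder with $C_{m-1}^k u$.

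Next I would expand $\sum_{f\in\Delta_{m-1}}K_{m,f}^k u = \sum_f\sum_{g\in\bar\Delta(f)}(-1)^{|f|-|g|}K_{m,f,g}^k u$ into the three contributions in \eqref{def-K^k}: the averaging term $-L_g^*A_f^k u$, the middle term carrying $\phi_e+\psi_{e,g}$, and the term carrying $d(\phi_e/\rho_g^{j+1})\wedge L_g^*Q_{e,f}^k$. The key first observation is that, rewriting $\rho_g^{-(j+1)}L_g^*R_{e,f}^k u = \rho_g^{-1}L_g^*b^{-j}R_{e,f}^k u$ (which follows from $\rho_g = L_g^* b$), the $\phi_e$-part of the middle term is \emph{term-by-term equal to} $S$. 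Hence this part cancels $S$ exactly, leaving $-\,(\text{averaging part}) - (\psi\text{-part}) - (Q\text{-part})$. The averaging part, with its leading minus sign removed, becomes $\sum_{f\in\Delta_{m-1}}\sum_g(-1)^{|f|-|g|}L_g^*A_f^k u$, which is precisely the first sum in the formula \eqref{Cm-rewritten} for $C_{m-1}^k$.

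For the remaining two parts I would apply the two lemmas. The $\psi_{e,g}$-part is exactly the left-hand side of \eqref{Cmk-ident-pre1}, so that lemma converts it into the $0\le j\le n-m$ portion of the second sum of $C_{m-1}^k$, i.e.\ the order-reduction terms over pairs in $\Delta_{j,m-2}$; after moving the overall minus sign this matches the coefficient $(-1)^{j-1}$ in \eqref{Cm-rewritten}. The $Q$-part, after accounting for the explicit factor $(-1)^j$ with $j=n-m$ in \eqref{def-K^k}, equals $(-1)^{n-m}$ times the left-hand side of \eqref{Cmk-ident-pre2}; that lemma turns it into the single $j=n-m+1$ term of the second sum of $C_{m-1}^k$, thereby completing the index range to $0\le j\le n-(m-1)$. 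Assembling the first-sum contribution together with the now-complete second sum reproduces \eqref{Cm-rewritten} with $m$ replaced by $m-1$, which is the asserted identity \eqref{Cmk-ident}.

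The computation is essentially pure bookkeeping once the two lemmas are available; the one genuinely delicate point is sign- and index-tracking at the boundary of the summation range, where the $Q$-operator term must supply exactly the top index $j=n-m+1$ that the order-reduction terms of $C_m^k$ cannot reach. I expect the main obstacle to be verifying that the factor $(-1)^j$ appearing in \eqref{def-K^k} combines with the sign produced by \eqref{Cmk-ident-pre2} to yield precisely the coefficient $(-1)^{j-1}$ demanded by \eqref{Cm-rewritten}, and, relatedly, confirming that the $\phi_e$-piece of the middle term in $K_{m,f,g}^k$ cancels $S$ cleanly under the conversion $L_g^*b^{-j} = \rho_g^{-j}L_g^*$.
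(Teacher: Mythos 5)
Your proposal is correct and follows essentially the same route as the paper's proof: peel off the $\Delta_m$ terms via \eqref{def-Kmf} to leave the order-reduction sum $S$, expand $\sum_{f\in\Delta_{m-1}}K_{m,f}^k$ via \eqref{K-form}--\eqref{def-K^k} so that the $\phi_e$-piece (rewritten using $\rho_g = L_g^*b$) cancels $S$ exactly, and then convert the $\psi$- and $Q$-pieces with \eqref{Cmk-ident-pre1} and \eqref{Cmk-ident-pre2} into the $\Delta_{j,m-2}$ terms with $0\le j\le n-m+1$, which together with the averaging sum reproduce \eqref{Cm-rewritten} at level $m-1$. Your sign check at $j=n-m+1$ agrees with the paper's bookkeeping, so there is nothing to add.
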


\begin{proof}
 It follows from \eqref{Cm-rewritten} and
\eqref{def-Kmf} that
\[
C_m^k u - \sum_{f \in \Delta_m} K_{m,f}^k u=
\sum_{\substack{(e,f) \in \Delta_{j,m-1}\\ 0 \le j \le n-m}}(-1)^{j-1} 
\sum_{g \in \bar \Delta(f)} (-1)^{|f|-|g|}
\frac{\phi_e}{\rho_g} \wedge L_{g}^* b^{-j} R_{e,f}^ku.
\]
On the other hand, it follows from \eqref{K-form}, \eqref{def-K^k},
and the two identities \eqref{Cmk-ident-pre1} and
\eqref{Cmk-ident-pre2} derived above, that
\begin{multline*}
  \sum_{f \in \Delta_{m-1}} K_{m,f}^k u = - \sum_{f \in \Delta_{m-1}}
  \sum_{g \in \bar \Delta(f)} (-1)^{|f|-|g|}
  L_g^*A_f^k u
    \\
 +   \sum_{\substack{(e,f) \in \Delta_{j,m-1}\\ 0 \le j \le n-m}}(-1)^{j-1} 
    \sum_{g \in \bar \Delta(f)} (-1)^{|f|-|g|}
\frac{\phi_e}{\rho_g} \wedge L_{g}^* b^{-j} R_{e,f}^ku.
  \\
- \sum_{\substack{(e,f) \in \Delta_{j,m-2}\\0 \le j \le n-m+1}} (-1)^{j-1}
 \sum_{g \in \bar \Delta(f)} (-1)^{|f|-|g|}
\frac{\phi_e}{\rho_g}  
 \wedge L_g^*b^{-j}R_{e,f}^k u.
\end{multline*}
Therefore, by comparing the two formulas above, we obtain that 
\begin{multline*}
  C_m^k u - \sum_{\substack{f \in \Delta_j\\ j = m,m-1}} K_{m,f}^k u
  = \sum_{f \in \Delta_{m-1}} 
\sum_{g \in \bar \Delta(f)} (-1)^{|f|-|g|}L_g^*A_f^k u\\
+ \sum_{\substack{(e,f) \in \Delta_{j,m-2}\\0 \le j \le n-m+1}} (-1)^{j-1}
 \sum_{g \in \bar \Delta(f)} (-1)^{|f|-|g|}
\frac{\phi_e}{\rho_g}  
 \wedge L_g^*b^{-j}R_{e,f}^k u,
\end{multline*}
and by \eqref{Cm-rewritten}, the right hand side is exactly $C_{m-1}^k
u$. This completes the proof.
\end{proof}

\section{Bounding the operator norms}
\label{sec:bounds}
To complete the proof of Theorem~\ref{thm:main}, it remains to show
that all the operators, $W^k$ and $B_f^k$, of the decomposition
\eqref{main-decomp} are bounded in $L^2\Lambda^k(\Omega)$, and satisfy
a stable decomposition property.  This will be achieved by
Proposition~\ref{prop:L2bound} below. In fact, since these operators
commute with the exterior derivative, they will also be bounded on the
Sobolev space $H\Lambda^k(\Omega)$.

The various constants that appear in the bounds below will depend on
the mesh $\T$ through the shape-regularity constant $c_{\T}$, defined
by
\begin{equation*}
  c_{\T} = \max_{T \in \Delta_n(\T)} \frac{\diam(T)}{\diam(\Ball_{T})},
\end{equation*}
where $\Ball_{T}$ is the largest ball contained in $T$.  The
consequence of this is that if we consider a family of meshes, $\{\T^h
\}$, parametrized by a real parameter $h \in (0,1]$, typically
  obtained by mesh refinements, the bounds will be uniform with
  respect to $h$ as long as we restrict to a family with a uniform
  bound on the constants $\{c_{\T^h} \}$.  In addition to the
  dependence explicitly stated, the constants in bounds below will
  also depend on the space dimension $n$.  Throughout this section, we
  will assume that the operators under investigation are applied to
  piecewise smooth differential forms.  However, since the space
  $\Lambda^k(\T)$ is dense in $L^2\Lambda^k(\Omega)$, it a consequence
  of the domain of dependence result in Proposition~\ref{prop:B} and
  the bound obtained in Proposition~\ref {prop:L2bound} below, that
  all the operators $B_f^k$, where $f \in \Delta_m$, can be extended
  to bounded operators from $L^2\Lambda^k(\Omega_f)$ to itself if $0
  \le m< n$, and from $L^2\Lambda^k(\Omega_f^E)$ to
  $L^2\Lambda^k(\Omega_f)$ when $m=n$.  To bound the norms of the
  operators comprising the new decomposition of the bubble transform
  developed in this paper, we will basically follow the approach
  developed in \cite[Section 8]{bubble-II}.  We recall that the
  decomposition \eqref{main-decomp} takes the form
\begin{equation*}
  u = W^ku + \sum_{m=0}^n \sum_{f \in \Delta_m} B_{f}^k u,
\end{equation*}
The main result of this section is the following bound. 

\begin{prop}
\label{prop:L2bound}
There exists a constant $c$, depending on the shape-regularity
constant $c_{\T}$, such that for $0 \le k \le n$, we have
\[
\|W^k u\|_{L^2(\Omega)},  \Big(\sum_{f \in \Delta(\T)} 
\|B_f^k u\|_{L^2(\Omega_f)}^2 \Big)^{1/2}  \le c \|u\|_{L^2(\Omega)}.
\]
\end{prop}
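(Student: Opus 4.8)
The plan is to reduce the global estimate to a uniform local bound on each operator, together with a finite-overlap argument. For a shape-regular mesh the macroelements $\{\Omega_f\}$ have bounded overlap: each point of $\Omega$ lies in a number of macroelements controlled by $c_\T$, and similarly for the extended macroelements $\Omega_f^E$. Consequently the square-summed estimate $\big(\sum_f \|B_f^k u\|_{L^2(\Omega_f)}^2\big)^{1/2} \le c\|u\|_{L^2(\Omega)}$ follows once we establish $\|B_f^k u\|_{L^2(\Omega_f)} \le c\|u\|_{L^2(\Omega_f)}$ for $f \in \Delta_m$, $m<n$, and $\|B_f^k u\|_{L^2(\Omega_f)} \le c\|u\|_{L^2(\Omega_f^E)}$ for $f\in\Delta_n$, with $c$ depending only on $c_\T$. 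By \eqref{def-B} these reduce to uniform local bounds on the operators $K_{m,f}^k$ and on $W^k$.

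The core of the argument is to bound the averaging and order-reduction operators through which $u$ enters. I would first show that, with the weighting by powers of $b$ that appears throughout the construction,
\[
\|A_f^k u\|_{L^2(\S_f^c)},\ \|b^{-j}R_{e,f}^k u\|_{L^2(\S_f^c)},\ \|b^{-(j+1)}Q_{e,f}^k u\|_{L^2(\S_f^c)} \le c\,\|u\|_{L^2(\Omega_f)},
\]
with $c$ depending only on $c_\T$. Each of these is an integral operator built from the map $G_f$ and a weight form ($z_f$, $z_{e,f}$, or $w_{e,f}$); writing $A_f^k u = \int_\Omega G_f(y,\cdot)^* u \wedge z_f$ and changing variables through $G_f$ and the barycentric map $L_f$, the estimate reduces to a Schur/Young bound on a reference configuration in which the Jacobians of $G_f$ and $L_f$, the normalization $\int z_f = 1$, and the sizes of the weight forms scale in a way controlled by $c_\T$. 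The reconstructing pullbacks $L_g^*$ are bounded in $L^2$ because the maps $L_g$ are piecewise affine and bi-Lipschitz with constants governed by $c_\T$, cf. \eqref{L-star-prop}.

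The principal new difficulty, compared with \cite[Section 8]{bubble-II}, is uniform control of the recursively constructed weight functions $\{w_{e,f}\}$, $\{z_{e,f}\}$ and of the auxiliary mesh functions $\mu_e(f)$, $\beta_e(f)$, $\psi_{e,g}(f)$. I would establish, by induction on $m$ paralleling the construction in Section~\ref{sec:mesh-functions}, scaled bounds $\|w_{e,f}\|_{L^2}+\|z_{e,f}\|_{L^2} \le c$ with $c$ depending only on $c_\T$. The point is that each inductive step defines these functions as the unique solution of a linear problem subject to the gauge conditions $\delta(f^*)\mu = 0$ and $d_f^* w = 0$, cf. Lemma~\ref{lem:mu-1} and the construction around \eqref{F-def}--\eqref{F-def-special}; the minimal-norm solution is controlled by the well-conditioning constants of the coboundary complex on $f^*$ and of the complex $(\dzero\P_1^-\Lambda(\T_f),d)$, both of which are uniform in $c_\T$ since they are finite complexes of trimmed linear forms on a shape-regular macroelement. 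Propagating these estimates through the recursion, and controlling the combinatorial factors $|f^*|$ and the number of simplices in $\Omega_f$ (again bounded by $c_\T$), yields the weight-function bounds feeding into the core estimates above.

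Finally I would assemble the pieces. For $f\in\Delta_m$ the operator $K_{m,f}^k u = \sum_{g\in\bar\Delta(f)}(-1)^{|f|-|g|}L_g^* A_f^k u$ is bounded immediately by the first core estimate. For the operators $K_{m,f}^k$ with $f\in\Delta_{m-1}$, which carry the rational factors $\rho_g^{-(j+1)}$, I would avoid bounding the individually singular terms $K_{m,f,g}^k$ and instead use that $K_{m,f}^k u$ is supported on $\Omega_f$ and is a fixed, mesh-dependent reconstruction of the bounded data $\{A_f^k u,\, b^{-j}R_{e,f}^k u,\, b^{-(j+1)}Q_{e,f}^k u\}$ on $\S_f^c$; since the singular combinations cancel on $\Omega_f$, a scaling argument on $\Omega_f$ using the weight-function bounds of the previous step controls the reconstruction operator and gives $\|K_{m,f}^k u\|_{L^2(\Omega_f)} \le c\|u\|_{L^2(\Omega_f)}$. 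The operator $W^k u = (-1)^{k-1}\sum_{e\in\Delta_k}\phi_e\big(\int_\Omega u\wedge z_{e,\emptyset}\big)$ is bounded directly, each coefficient being dominated by $\|u\|_{L^2(\Omega_e^E)}\|z_{e,\emptyset}\|_{L^2}$ with $\phi_e$ bounded and supported in $\Omega_e$, so finite overlap yields the stated bound. Having bounded $W^k$ and all $B_g^k$ for $g\in\Delta_m$, $m\le n-1$, the operator $C_{n-1}^k = W^k + \sum_{\substack{g\in\Delta_m\\ m\le n-1}} B_g^k$ is then bounded, and the top bubbles $B_f^k u = \tr_f(u-C_{n-1}^k u)$, $f\in\Delta_n$, are controlled through $\|u\|_{L^2(f)}$ and $\|C_{n-1}^k u\|_{L^2(f)}$; summing over $\Delta_n$ with finite overlap completes the proof. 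I expect the weight-function estimates of the third step to be the main obstacle, since the recursion couples forms of different degrees across the levels $m$.
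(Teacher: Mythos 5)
Your overall architecture coincides with the paper's: reduce to uniform local bounds plus finite overlap (Lemma~\ref{lem:overlap}), control the recursively constructed weights by induction on $m$ using the gauge conditions $\delta(f^*)\mu=0$ and $d_f^*w=0$ together with the purely combinatorial (entries $0,\pm1$) nature of the $\partial$, $\delta$ matrices, and bound the integral operators built from $G_f$ and a weight form by a single generic estimate. These are exactly the paper's Lemma~\ref{lem:expansion}, Lemma~\ref{Q-bound}, and Lemma~\ref{lem:L2-W}. Two presentational differences are worth noting. First, the paper controls the weights through \emph{uniformly bounded expansions} in the Whitney basis rather than through $L^2$ norms; this automatically encodes the $h$-scalings (cf.\ \eqref{z-bound}) that your statement $\|w_{e,f}\|_{L^2}+\|z_{e,f}\|_{L^2}\le c$ leaves implicit and which do not hold without the appropriate powers of $h_f$. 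Second, the paper bounds the composite operator $L_g^*b^{-j}\Q_j^k(w)$ in one stroke (Lemma~\ref{Q-bound}), whereas you split into an $L^2(\S_f^c)$ bound followed by boundedness of $L_g^*$; since $L_g:\Omega\to\S_g^c$ is dimension-reducing, it is not bi-Lipschitz, and that splitting would itself require a co-area-type argument tracking the fiber measure, which your sketch does not supply.

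The genuine gap is in your assembly step for $K_{m,f}^k$ with $f\in\Delta_{m-1}$. You assume the summands $K_{m,f,g}^k$ are individually singular and propose to exploit a cancellation on $\Omega_f$ via an unspecified ``scaling argument'' controlling a ``reconstruction operator''; no mechanism is given, and this is precisely where the proof must produce an estimate. In fact, no cancellation is needed for the norm bound (cancellation is used only for the support property): each term in \eqref{def-K^k} is individually bounded. The apparent singularity at $b=0$ is removed by Lemma~\ref{lem:R-pol-prop}, which shows $b^{-j}R_{e,f}^ku$ is piecewise smooth on $\S_f^c$, and the rational prefactor $\bigl(\phi_e+\psi_{e,g}(f)\bigr)/\rho_g$ is bounded in $L^\infty$, because every Whitney form $\phi_{e'}$ with $e'\in\Delta_j(g^*)$ carries barycentric factors $\lambda_i$, $i\in I(g^*)$, and $\sum_{i\in I(g^*)}\lambda_i\le\rho_g$; this is the content of \eqref{phi-bound} and \eqref{psi-bound}, with the quantitative bound $ch_e^{-j}$. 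Combining these $L^\infty$ bounds with Lemma~\ref{Q-bound} applied to $w=z_{e,f}$ (using \eqref{z-bound}) yields $\|K_{m,f}^ku\|_{L^2(\Omega_f)}\le c\|u\|_{L^2(\Omega_f)}$ term by term, after which finite overlap, the $W^k$ bound, and the treatment of the top-dimensional bubbles via \eqref{def-B-n} proceed as you describe. So your detour is both unsubstantiated as written and unnecessary; replacing it by the term-by-term $L^\infty$ estimates closes the argument.
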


To establish the bounds in Proposition~\ref{prop:L2bound}, we will
need some preliminary results.  We define the overlap of a set of
subdomains as the smallest upper bound for the number of domains which
will contain any fixed element $T \in \Delta_n$.  The overlap of the
set of macroelements, $\{ \Omega_f \}_{f \in \Delta_m}$, will only
depend on $m$ and the space dimension $n$, while the overlap for the
set of extended macroelements, $\{\Omega_{f}^{E} \}$, will depend on
the mesh $\T$. Another important property of the extended
macroelements is the variation of the size of the elements.  We define
$h_f = \max_{T \in \Delta_n(\T_f^E)} \diam(T)$, where $\T_f^E$ is the
restriction of the mesh $\T$ to $\Omega_f^E$.  The following result,
established in Lemmas 8.2 and 8.3 of \cite{bubble-II}, shows that
these domains allow bounded overlap and local quasi-uniformity in the
following sense.

\begin{lem}\label{lem:overlap}
There is a constant $c$, depending on $\T$ only through the
shape-regularity constant $c_{\T}$, which bounds the overlap of the
domains $\{\Omega_f^E \}_{f \in \Delta(\T)}$. Furthermore,
\begin{equation}\label{max-min}
h_f \le
c \min_{T \in \Delta_n(\T_f^E)} \diam(T), 
\quad f \in \Delta(\T).
\end{equation}
\end{lem}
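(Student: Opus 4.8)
The plan is to reduce both assertions to two standard geometric consequences of shape regularity and then to finish by elementary counting and a chaining argument. The statement is exactly Lemmas 8.2 and 8.3 of \cite{bubble-II}, so I would either cite those directly or reconstruct the argument along the lines below.

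The two geometric inputs I would isolate first are: (a) \emph{bounded valence}, i.e. the number of $n$-simplices in any vertex star $\Omega_{x_i}$ is bounded by a constant $M = M(c_{\T},n)$; and (b) \emph{facet-neighbour comparability}, i.e. if $T,T' \in \Delta_n$ share an $(n-1)$-face then $\diam(T)$ and $\diam(T')$ are comparable with ratio bounded in terms of $c_{\T}$ and $n$. For (a) I would run a solid-angle packing argument: shape regularity forces the solid angle subtended at a vertex $v$ by each $T \ni v$ to be bounded below by a constant depending only on $c_{\T}$ and $n$, since the inscribed ball of $T$ has radius comparable to $\diam(T)$ and hence $T$ occupies a fixed fraction of the directions at $v$; as distinct simplices at $v$ have disjoint interiors and therefore disjoint solid angles, at most $|S^{n-1}|/\theta_{\min}$ of them can meet at $v$. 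For (b) I would use that in a shape-regular simplex every edge, and hence every facet, has diameter comparable to $\diam(T)$, so a shared facet $F$ satisfies $\diam(T) \approx \diam(F) \approx \diam(T')$.

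For the overlap bound I would fix $T \in \Delta_n$ and observe that $T \in \Delta_n(\T_f^E)$ precisely when $T \subseteq \Omega_{x_i}$ for some $i \in I(f)$, i.e. when $f$ shares a vertex with $T$, so that $I(f) \cap I(T) \neq \emptyset$. Every such $f$ is then a subsimplex of the star $\Omega_v$ of one of the $n+1$ vertices $v$ of $T$. By (a) each such star contains at most $M$ top-dimensional simplices, hence at most $M\,(2^{n+1}-1)$ subsimplices in total, and summing over the $n+1$ vertices of $T$ bounds the number of admissible $f$ by $(n+1)M(2^{n+1}-1)$, a constant depending only on $c_{\T}$ and $n$. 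This is the claimed overlap bound.

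For the local quasi-uniformity \eqref{max-min} I would exploit that $f$ is a face of some $n$-simplex $T_f$, which therefore contains every vertex $x_i$, $i \in I(f)$, so $T_f$ lies in every star $\Omega_{x_i}$ comprising $\Omega_f^E$. Given any $T \in \Delta_n(\T_f^E)$, it lies in some $\Omega_{x_i}$, and within that star $T$ can be joined to $T_f$ by a chain of $n$-simplices with consecutive members sharing a facet, of length controlled by the valence bound (a); applying (b) along the chain gives $\diam(T) \approx \diam(T_f)$ with constant depending only on $c_{\T}$ and $n$. Since this holds for every $T$ in $\Omega_f^E$ and $|I(f)| \le n+1$, all simplices of $\T_f^E$ have diameter comparable to $\diam(T_f)$, which yields $h_f \le c \min_{T \in \Delta_n(\T_f^E)} \diam(T)$. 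The main obstacle is input (a): the solid-angle (equivalently volume) packing estimate that converts the single scalar constant $c_{\T}$ into a uniform bound on the combinatorial complexity of the mesh. Once bounded valence is available the remainder is bookkeeping, the only delicate point being the facet-connectivity of the vertex stars needed to run the chaining step in (b).
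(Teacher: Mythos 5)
Your proposal is correct and matches the paper's treatment: the paper gives no internal proof of Lemma~\ref{lem:overlap}, stating only that it was established in Lemmas 8.2 and 8.3 of \cite{bubble-II}, which you correctly identify and would cite. Your reconstruction---bounded valence via solid-angle packing, facet-neighbour diameter comparability from the shape-regularity constant $c_{\T}$, subsimplex counting for the overlap, and facet-chaining within vertex stars (legitimate here since $\Omega$ is a Lipschitz polyhedral domain, so the stars are strongly connected) for \eqref{max-min}---is the standard argument underlying those cited lemmas, and it is sound.
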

  
From the definitions of the operators $K_{m,f}^k$, given by
\eqref{def-Kmf}--\eqref{def-K^k}, we will need appropriate bounds for
the functions $\phi_e$, $\psi_{e,g}(f)$, and also for the functions
$w_{e,f}$ and $z_{e,f}$ which are used to define the order reduction
operators $Q_{e,f}^k$ and $R_{e,f}^k$.  All these functions are
trimmed linear forms with local support.  In particular, if $(e,f) \in
\Delta_{j,m}$ and $g \in \bar \Delta(f)$, then $\psi_{e,g}(f)$ belongs
to $\P_1^-\Lambda^j(\T,g^*)$, $w_{e,f} \in
\dzero\P_1^-\Lambda^{n-j-1}(\T_f)$, and $z_{e,f} \in
\dzero\P_1^-\Lambda^{n-j}(\T_f) \cap
\dzero\P_1^-\Lambda^{n-j}(\T_e^E)$.  In general, if $w$ is any trimmed
linear form, say $w \in \P_1^-\Lambda^j(\T)$, then $w$ admits a unique
expansion of the form
\[
w = \sum_{e \in \Delta_j} c_e \phi_e,
\]
where $\{c_e\}$ are real coefficients. If $\max_{e \in \Delta_j}
|c_e|$ can be bounded by a quantity which only depends on the mesh
$\T$ through the mesh regularity constant, we will say that {\it $w$
  admits a uniformly bounded expansion.}  It is a consequence of the
bound \eqref{max-min} that for $g \in \bar \Delta(f)$ and $e \in
\Delta_j(f^*)$ we have
\begin{equation}\label{phi-bound}
 \|\phi_e/\rho_g\|_{L^\infty(\Omega)} \le c h_e^{-j},
\end{equation}
where $c$ depends on the shape-regularity constant. Note, in
particular, that $g = \emptyset$ gives a bound on the $L^\infty$-norm
of $\phi_e$. Next, recall that the coefficients $\{a_{e,e^\prime}\} =
\{a_{e,e^\prime}(f)\}$ of functions $\{\mu_e(f)\}$, given by
\eqref{mu-expand}, can be computed recursively with respect to
increasing values of $j$ by the algebraic systems
\eqref{mu-rel-algebraic}, \eqref{mu-extra}. There are no explicit mesh
dependent quantities present in the systems \eqref{mu-rel-algebraic},
\eqref{mu-extra}.  Only the number of equations depends on the mesh
through the number of elements in $\Delta_j(f^*)$, and this number can
be bounded by the shape-regularity constant. Therefore, since
$a_{e,\emptyset}(f) = - 1/|f^*|$ for $e \in \Delta_0(f^*)$, we can
conclude that all functions $\{\mu_e(f)\}$ admit uniformly bounded
expansions.  Furthermore, since the functions
$\{\tr_{\Omega_f}\beta_e(f)\}$ and $\{\psi_{e,g}(f)\}$ are explicitly
defined from $\{ \mu_e(f) \}$, through \eqref{def-beta-alt} and
\eqref{def-psi}, the same conclusion holds for these function classes.
Hence, it follows from \eqref{mu-expand} \eqref{def-psi}, and
\eqref{phi-bound} that the forms $\mu_e \in \P_1^- \Lambda^{j-1}(\T,
f^*)$ and $\psi_{e,g}(f) \in \P_1^- \Lambda^{j}(\T, g^*)$ satisfy
\begin{equation}\label{psi-bound}
    \|\mu_e(f)\|_{L^\infty(\Omega)} \le c h_e^{-j+1}, \qquad
      \|\psi_{e,g}(f)/\rho_g\|_{L^\infty(\Omega)} \le c h_e^{-j},
\end{equation}
where $(e,f) \in \Delta_{j,m}$ and $ g \in \bar \Delta(f)$ for $0 \le
m \le n-1$, $0 \le j <n-m$.
 
The following lemma below is a key ingredient to establish
Proposition~\ref{prop:L2bound}.
\begin{lem}\label{lem:expansion}
The trimmed linear differential forms $w_{e,f}$ and
  $z_{e,f}$ admit uniformly bounded expansions.
\end{lem}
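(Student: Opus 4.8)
The plan is to prove the bound for the functions $w_{e,f}$ first and then deduce it for $z_{e,f}$. Since $z_{e,f} = (\delta^+ w)_{e,f}$, cf. \eqref{w-to-z}, is a signed sum of a uniformly bounded (by shape-regularity) number of the $w$ functions, the coefficients in the trimmed linear expansion of $z_{e,f}$ are controlled by those of the $w_{e,f}$; thus it suffices to treat the latter. I would argue by downward induction on $m$, running from $m=n$ and following verbatim the inductive construction of Section~\ref{sec:weight-func}. In the base case $m=n$, $j=-1$, the function $w_{\emptyset,f} = -(\kappa_f/|\Omega_f|)\vol$ is piecewise constant, and its coefficient on each $n$-simplex $T \subset \Omega_f$ equals $\pm\,|T|/|\Omega_f| \le 1$; so the expansion is bounded by $1$ independently of the mesh.

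For the inductive step with $f \in \Delta_{m-1}$ I would treat the two defining regimes separately. When $-1 \le j < n-m$, the defining relation \eqref{F-def} equates two elements of $\P_1^-\Lambda^j(\T,f^*)\otimes\P_1^-\Lambda^{n-j-1}(\T)$; expanding each $\mu_{e'}(f)$ in the basis $\{\phi_e\}_{e\in\Delta_j(f^*)}$ as in \eqref{mu-expand} and matching the linearly independent left factors $\phi_e$ expresses each $w_{e,f}$ as a linear combination of the functions $(\delta^+ w)_{e',f}$, $e'\in\Delta_{j+1}(f^*)$, with coefficients drawn from the expansion coefficients $\{a_{e',e}(f)\}$ of the $\mu$ functions. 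The $(\delta^+ w)_{e',f}$ involve only the level-$m$ functions $w_{e'(\hat x_i),\<x_i,f\>}$, which are uniformly bounded by the induction hypothesis; the $\mu$-coefficients are uniformly bounded as recalled just before the lemma; and the number of summands is bounded through shape-regularity. Hence $w_{e,f}$ inherits a uniformly bounded expansion.

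The remaining regime $j = n-m$, i.e. $(e,f) \in \Delta_{n-m,m-1}$, is where the genuine work lies. Matching \eqref{F-def-special} against $\{\phi_e\}_{e\in\Delta_{n-m}(f^*)}$, after expanding each $\beta_{e'}(f)$ in this basis (also uniformly bounded, being explicitly built from the $\mu$'s via \eqref{def-beta-alt}), exhibits $d w_{e,f}$ as a uniformly bounded combination of the level-$m$ functions $(\delta^+ w)_{e',f}$; thus $d w_{e,f}$ admits a uniformly bounded expansion. It then remains to recover $w_{e,f}$ itself from the data $d w_{e,f}$ and the gauge condition $d_f^* w_{e,f} = 0$, using the exactness of $(\dzero\P_1^-\Lambda(\T_f),d)$. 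In coefficient form, writing $w_{e,f} = \sum_g c_g \phi_g$, this is the purely combinatorial linear system $\delta c = (\text{coefficients of } d w_{e,f})$, $\partial c = 0$, together with the boundary conditions encoded by $\dzero$, whose solution $c$ is unique by exactness.

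The main obstacle is therefore to bound the inverse of this combinatorial system in the coefficient ($\ell^\infty$) norm, uniformly in the mesh. The crucial point is that the matrices representing $\delta$ and $\partial$ on the local complex $\T_f$ have entries in $\{-1,0,1\}$ and carry no geometric mesh dependence whatsoever — all metric information has already been absorbed into the uniformly bounded coefficients of the $\mu$ and $\beta$ functions. The size of the system is the number of simplices of $\T_f$, which is bounded in terms of $c_{\T}$ and $n$ by the bounded-overlap/valence estimates of Lemma~\ref{lem:overlap}. Consequently the solution map $d w_{e,f} \mapsto w_{e,f}$ takes values in only finitely many fixed integer linear maps as $\T_f$ ranges over the finitely many combinatorial types of local complexes compatible with the shape-regularity bound; taking the maximum of their (finite) $\ell^\infty$ operator norms over this finite set yields the desired uniform constant. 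This closes the induction and, via $z_{e,f} = (\delta^+ w)_{e,f}$, establishes the lemma.
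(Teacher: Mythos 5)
Your proof is correct and follows essentially the same route as the paper: downward induction on $m$ with base case $w_{\emptyset,f} = -(\kappa_f/|\Omega_f|)\vol$, the matching of \eqref{F-def} via the uniformly bounded $\mu$-coefficients for $-1 \le j < n-m$ (the paper's \eqref{F-def-alt}), and for $j = n-m$ the recovery of $w_{e,f}$ from $dw_{e,f}$ and $d_f^* w_{e,f} = 0$ through the purely combinatorial system in $\partial$ and $\delta$ with entries in $\{-1,0,1\}$, whose size is controlled by shape regularity. Your closing appeal to the finitely many combinatorial types of local complexes is a slightly more explicit justification of the uniform invertibility that the paper simply asserts, but it is the same idea.
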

We will delay the proof of this result to the end of the section.
However, from this bound, combined with \eqref{phi-bound} and
Lemma~\ref{lem:overlap}, we immediately obtain the estimates
\begin{equation}\label{z-bound}
  h_f^{-1} \| w_{e,f} \|_{L^\infty(\Omega)},
  \| z_{e,f} \|_{L^\infty(\Omega)} \le c h_f^{j-n}, \quad (e,f) \in
\Delta_{j,m},
\end{equation}
where the constant $c$ depends on the shape regularity constant.

\begin{lem}\label{lem:L2-W}
The operator $W^k$ maps $L^2(\Omega)$ to itself, and with an operator
norm bounded by the shape-regularity constant.
\end{lem}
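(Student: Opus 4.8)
\emph{The plan} is to exploit that $W^k u$ is an explicit trimmed linear form, namely $W^k u = (-1)^{k-1}\sum_{e \in \Delta_k} a_e\, \phi_e$ with scalar coefficients $a_e = \int_\Omega u \wedge z_{e,\emptyset}$, and to bound the map $u \mapsto (a_e)_{e \in \Delta_k}$ and the synthesis $(a_e) \mapsto \sum_e a_e \phi_e$ separately, with the powers of the local mesh size cancelling between them. Recall from Lemma~\ref{lem:z-prop-1} that for $e \in \Delta_k$ the pair $(e,\emptyset) \in \Delta_{k,-1}$, so $z_{e,\emptyset} \in \P_1^-\Lambda^{n-k}(\T)$ with support in $\Omega_{\emptyset}\cap \Omega_e^E = \Omega_e^E$, while $\phi_e$ has support in $\Omega_e$. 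The whole argument then rests on the finite-overlap properties of $\{\Omega_e\}_{e \in \Delta_k}$ and $\{\Omega_e^E\}_{e \in \Delta_k}$ recorded in Lemma~\ref{lem:overlap} and the text preceding it.

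\emph{First I would bound the coefficients.} Since $u \wedge z_{e,\emptyset}$ is an $n$-form whose magnitude is controlled pointwise by $c\,|u|\,|z_{e,\emptyset}|$, the Cauchy--Schwarz inequality gives $|a_e| \le c\,\|u\|_{L^2(\Omega_e^E)}\,\|z_{e,\emptyset}\|_{L^2(\Omega_e^E)}$. To estimate $\|z_{e,\emptyset}\|_{L^2}$ I would not invoke \eqref{z-bound} directly, since that bound is stated for $0 \le m \le n-1$ whereas here $f = \emptyset$ corresponds to $m=-1$ and $h_f$ would degenerate to the global mesh size. Instead I would use Lemma~\ref{lem:expansion}: writing $z_{e,\emptyset} = \sum_{e'} c_{e'}\phi_{e'}$ over $e' \in \Delta_{n-k}$ supported in $\Omega_e^E$ with $|c_{e'}|\le c$, the bound \eqref{phi-bound} with $g=\emptyset$ gives $\|\phi_{e'}\|_{L^\infty}\le c\,h_{e'}^{-(n-k)}$, and the local quasi-uniformity \eqref{max-min} lets me replace $h_{e'}$ by $h_e$ throughout $\Omega_e^E$. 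Since $\Omega_e^E$ contains a shape-regularity-bounded number of simplices of size $\approx h_e$, one has $|\Omega_e^E|\le c\,h_e^n$, and hence $\|z_{e,\emptyset}\|_{L^2(\Omega_e^E)}\le c\,h_e^{-(n-k)}|\Omega_e^E|^{1/2}\le c\,h_e^{k-n/2}$, so that $|a_e|\le c\,h_e^{k-n/2}\|u\|_{L^2(\Omega_e^E)}$.

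\emph{Next I would bound the synthesis.} For $e \in \Delta_k$, the same scaling of Whitney forms gives $\|\phi_e\|_{L^\infty(\Omega_e)}\le c\,h_e^{-k}$ and $|\Omega_e|\le c\,h_e^n$, hence $\|\phi_e\|_{L^2(\Omega_e)}\le c\,h_e^{n/2-k}$. Multiplying, the powers of $h_e$ cancel exactly: $\|a_e\phi_e\|_{L^2(\Omega_e)}\le c\,\|u\|_{L^2(\Omega_e^E)}$. Finally I would assemble the estimate: since each $a_e\phi_e$ is supported in $\Omega_e$ and the family $\{\Omega_e\}_{e\in\Delta_k}$ has overlap depending only on $k$ and $n$, the standard finite-overlap inequality yields $\|W^k u\|_{L^2(\Omega)}^2 \le c\sum_{e\in\Delta_k}\|a_e\phi_e\|_{L^2(\Omega_e)}^2 \le c\sum_{e\in\Delta_k}\|u\|_{L^2(\Omega_e^E)}^2$, and the bounded overlap of $\{\Omega_e^E\}$ from Lemma~\ref{lem:overlap} bounds the right-hand side by $c\,\|u\|_{L^2(\Omega)}^2$, with $c$ depending on $\T$ only through $c_{\T}$. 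I expect the main obstacle to be the correct local bookkeeping of the powers of $h_e$, in particular re-deriving the local $L^\infty$ bound on $z_{e,\emptyset}$ via the uniformly bounded expansion and local quasi-uniformity rather than the global estimate \eqref{z-bound}; once that scaling is in place, the cancellation and the overlap summation are routine.
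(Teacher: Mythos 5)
Your proposal is correct and follows essentially the same route as the paper's proof: bound the coefficients $\int_\Omega u\wedge z_{e,\emptyset}$ by $c\,h_e^{k-n/2}\|u\|_{L^2(\Omega_e^E)}$, bound $\|\phi_e\|_{L^2(\Omega_e)}$ by $c\,h_e^{n/2-k}$, and conclude via the finite-overlap properties of $\{\Omega_e\}$ and $\{\Omega_e^E\}$ from Lemma~\ref{lem:overlap} (the paper implements your ``standard finite-overlap inequality'' explicitly via the pointwise Cauchy--Schwarz trick with the characteristic functions $\kappa_e$). Your only deviation is a point of extra care: the paper cites \eqref{z-bound} with the implicit localization $\|z_{e,\emptyset}\|_{L^\infty}\le c\,h_e^{k-n}$, whereas you re-derive this from Lemma~\ref{lem:expansion}, \eqref{phi-bound}, and \eqref{max-min}, which is a legitimate (indeed slightly more scrupulous) justification of the same estimate.
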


\begin{proof}
We recall that the operator $W^k$ is given by 
\[
 W^ku =   (-1)^{k-1}
\sum_{e \in \Delta_{k}}
   \phi_e \Big(\int_{\Omega} u \wedge z_{e,\emptyset}\Big),
\]
Since the function $z_{e,\emptyset}$ is supported on
$\Omega_e^E$ we obtain from \eqref{z-bound} that
\begin{equation*}
  \int_{\Omega} u \wedge z_{e,\emptyset}
  \le c h_e^{n/2} \| z_{e, \emptyset} \|_{L^\infty(\Omega_e^E)}\| u \|_{L^2(\Omega_e^E)}
\le c h_e^{k- n/2} \| u \|_{L^2(\Omega_e^E)},
\end{equation*}
where here, and below, the constant $c$ depends on the
shape-regularity constant, but is not necessarily the same at each
occurrence.  Furthermore, since the function $\phi_e$ has support on
$\Omega_e$, we obtain from \eqref{phi-bound} that $|\int_{\Omega}
\phi_e^2| \le c h^{n-2k}.$ Finally, the fact that $\phi_e = \phi_e
\kappa_e$, where $\kappa_e$ is the characteristic function of
$\Omega_e$, implies that
\[
\Big(\sum_{e \in \Delta_{k}}
   \phi_e \big(\int_{\Omega} u \wedge z_{e,\emptyset}\big)\Big)^2 \le 
   \sum_{e \in \Delta_{k}} \phi_e^2
   \big(\int_{\Omega} u \wedge z_{e,\emptyset}\big)^2 \sum_{e \in \Delta_{k}} \kappa_e.
   \]
Putting this together, we obtain 
\begin{multline*}
  \| W^k u \|_{L^2(\Omega)}^2 \le c \int_{\Omega}\Big(\sum_{e \in \Delta_{k}}
   \phi_e \big(\int_{\Omega} u \wedge z_{e,\emptyset}\big)\Big)^2 \\ 
   \le c \big(\sum_{e \in \Delta_{k}}\| u \|_{L^2(\Omega_e^E)}^2\big)
  \big(\| \sum_{e \in \Delta_{k}} \kappa_e \|_{L^{\infty}(\Omega)}\big)
 \le c \| u \|_{L^2(\Omega)}^2,
\end{multline*}
where the final inequality follows from the overlap properties of the
domains $\{\Omega_e\}$ and $\{\Omega_e^E\}$,
cf. Lemma~\ref{lem:overlap}.  This completes the proof.
\end{proof} 

In addition to the $L^2$-bound for the operator $W^k$, we will need
corresponding bounds for all the operators $K_{m,f}^k$.  We observe
from the definitions \eqref{def-Kmf}, \eqref{K-form}, and
\eqref{def-K^k} of these operators that we will also need appropriate
bounds for operators of the form $A_f^k$, $R_{e,f}^k$, and
$Q_{e,f}^k$, composed with the pullback $L_g^*$ for $g \in \bar
\Delta(f)$. In fact, the three operators $A, Q, R$ are all of the same
form. In general, let $f \in \Delta_m$, and assume that $w$ is a fixed
function in $\0 \Lambda^{n-j}(\T_f)$.  Consider the corresponding
operator, $\Q_j^k(w) : \Lambda^k(\T_f) \to \Lambda^{k-j}(\S_f^c)$,
given by
\[
\Q_j^k(w)u = \int_{\Omega} \Pi_j G_f^*u \wedge w.
\]
By following the steps of the derivation of the bound (8.11) of
\cite{bubble-II}, but where we retain the $L^\infty$-norm of $w$ instead
of replacing it by an upper bound, we obtain the following result.

\begin{lem}\label{Q-bound}
Assume that $f \in \Delta_m(\T)$, $0 \le m \le n-1$, and that $0 \le j
\le k$. If $w \in \0 \Lambda^{n-j}(\T_f)$ then the bound
\[
\| L_g^*b^{-j} \Q_j^k(w) u \|_{L^2(\Omega_f)} \le c h_f^n \|w\|_{L^\infty(\Omega_f)}
\| u \|_{L^2(\Omega_f)},
\]
holds for any $g \in \bar \Delta(f)$, where the constant $c$ only
depends on $\T$ through the shape-regularity constant $c_{\T}$.
\end{lem}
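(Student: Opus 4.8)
The plan is to treat $L_g^* b^{-j}\Q_j^k(w)$ as an averaging integral operator and to bound its $L^2(\Omega_f)\to L^2(\Omega_f)$ norm directly, following the derivation of (8.11) in \cite{bubble-II} but keeping $\norm{w}_{L^\infty(\Omega_f)}$ explicit rather than absorbing it into the mesh-dependent constant. First I would make the structure of $\Pi_j G_f^* u$ explicit. Since $G_f(y,\lambda)=\sum_{i\in I(f)}\lambda_i x_i + b(\lambda)y$, its differential in $y$ is $b(\lambda)\,\mathrm{id}$ while its differential in $\lambda$ is $DG_f(y,\cdot)=\sum_{i\in I(f)}(x_i-y)\,d\lambda_i$. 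Pulling back a $k$-form $u$ and projecting onto $\Lambda^j(\Omega_f)\otimes\Lambda^{k-j}(\S_f^c)$ therefore produces a factor $b(\lambda)^j$, one power of $b$ for each of the $j$ differentials taken in the $y$-direction, multiplied by coefficients that are values $u\big(G_f(y,\lambda)\big)$ times $k-j$ geometric factors $(x_i-y)$ coming from the $\lambda$-differentials. Consequently $b^{-j}\Pi_j G_f^* u$ is a genuine $(k-j)$-form on $\S_f^c$ whose coefficients are bounded by $|u(G_f(y,\cdot))|$ times a product of $k-j$ factors $|x_i-y|\le c\,\diam(\Omega_f)\le c\,h_f$, the last estimate by the local quasi-uniformity \eqref{max-min}.

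Second, I would pull back by $L_g$ and integrate against $w$. The pullback $L_g^*$ replaces each $d\lambda_i$ by $\nabla\lambda_i\cdot dx$, and since $|\nabla\lambda_i|\le c\,h_f^{-1}$ on $\Omega_f$ (again by \eqref{max-min}), the $k-j$ differential factors contribute $h_f^{-(k-j)}$, which exactly cancels the $h_f^{\,k-j}$ produced by the geometric factors $(x_i-y)$ above. After this cancellation, each scalar coefficient of $v:=L_g^* b^{-j}\Q_j^k(w)u$ is bounded pointwise by $c\,\norm{w}_{L^\infty(\Omega_f)}\int_{\Omega_f}|u(G_f(y,L_g x))|\,dy$, so that $\norm{v}_{L^2(\Omega_f)}^2\le c\,\norm{w}_{L^\infty(\Omega_f)}^2\int_{\Omega_f}\big(\int_{\Omega_f}|u(G_f(y,L_g x))|\,dy\big)^2\,dx$.

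Third, and this is the technical heart, I would estimate the iterated integral. For fixed $x$ the map $y\mapsto z=G_f(y,L_g x)=\sum_{i\in I(g)}\lambda_i(x)\,x_i+\rho_g(x)\,y$ is affine with Jacobian $\rho_g(x)^n$ and image a subregion of $\Omega_f$ of measure $\asymp\rho_g(x)^n h_f^n$, so $u(G_f(\cdot,L_g x))$ samples $u$ only over this shrunken region. A pointwise Cauchy--Schwarz estimate in $y$ would produce an inverse power $\rho_g(x)^{-n}$, and $\rho_g^{-n}$ is not integrable over $\Omega_f$; the resolution, exactly as in \cite{bubble-II}, is to postpone the Cauchy--Schwarz step, apply Fubini to exchange the $x$- and $y$-integrations, and then change variables so that the singular weight $\rho_g^{-n}$ is absorbed by the Jacobian of the $x$-substitution. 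The bounded-overlap and local-quasi-uniformity properties of the macroelements in Lemma~\ref{lem:overlap} then guarantee that the resulting integral is controlled by a constant depending on $\T$ only through $c_\T$, yielding $\int_{\Omega_f}(\int_{\Omega_f}|u(G_f(y,L_g x))|\,dy)^2\,dx\le c\,h_f^{2n}\norm{u}_{L^2(\Omega_f)}^2$ and hence the claimed bound.

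The main obstacle is precisely this change-of-variables bookkeeping: because $G_f$ degenerates as $b=\rho_g\to 0$ (that is, as $\lambda$ approaches the far face $\S_f$ of the prism $\S_f^c$), a careless estimate introduces the non-integrable singularity $\rho_g^{-n}$. The correct order of operations, namely Fubini before Cauchy--Schwarz followed by the perspective-type substitution in $x$ whose Jacobian cancels $\rho_g^{-n}$, is the delicate point; it is carried out verbatim as in the derivation of (8.11) of \cite{bubble-II}, and the only change needed here is that the factor $\norm{w}_{L^\infty(\Omega_f)}$ is retained throughout instead of being bounded a priori by a mesh constant.
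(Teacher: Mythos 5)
Your proposal is correct and follows essentially the same route as the paper, whose entire proof of Lemma~\ref{Q-bound} consists of repeating the derivation of bound (8.11) of \cite{bubble-II} while retaining $\|w\|_{L^\infty(\Omega_f)}$ explicitly rather than absorbing it into a mesh-dependent constant --- precisely the reduction you make. Your supplementary bookkeeping (the factor $b^j$ from the $j$ differentials in the $y$-direction, the cancellation of $h_f^{k-j}$ against the $h_f^{-(k-j)}$ from $L_g^*$, and the Fubini-before-Cauchy--Schwarz handling of the non-integrable weight $\rho_g^{-n}$) is consistent with that derivation.
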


With the help of the results obtained above, the proof of
Proposition~\ref{prop:L2bound} is straightforward.

\begin{proof}(of Proposition~\ref{prop:L2bound})
Consider a typical term in the definition \eqref{def-K^k} of the
operator $K_{m,f}^k$ for $f \in \Delta_{m-1}$ given by
\begin{equation*}
  %\label{terms}
\Big(\frac{\phi_e + \psi_{e,g}(f)}{\rho_g}\Big)
 \wedge L_g^*b^{-j}R_{e,f}^k u,
 \end{equation*}
where $1 \le m \le n-1$, $0 \le j \le \min(n-m,k)$, $e \in
\Delta_j(f^*)$, and $ g \in \bar \Delta(f)$. Since the operator
$R_{e,f}^k$ can be identified as $\Q_j^k(z_{e,f})$, it follows from
\eqref{phi-bound}, \eqref{psi-bound}, \eqref{z-bound} and
Lemma~\ref{Q-bound} that the $L^2(\Omega_f)$-norm of this term can be
bounded by
\[
\|\Big(\frac{\phi_e + \psi_{e,g}(f)}{\rho_g}\Big)\|_{L^\infty(\Omega_f)} \cdot 
\|L_g^*b^{-j}R_{e,f}^k u\|_{L^2(\Omega_f)} 
\le c  \| u \|_{L^2(\Omega_f)},
\]
when $g \in \bar \Delta(f)$.  For each $f \in \Delta$, there
are a finite number of such terms in the definition of the operator
$K_{m,f}^k$ and it  therefore follows that
\[
\| K_{m,f}^k u \|_{L^2(\Omega_f)} \le c \| u \|_{L^2(\Omega_f)}.
\]
From this bound and the finite overlap property of the domains $\{\Omega_f\}$,
we then obtain 
\[
\sum_{m=0}^{n-1} \sum_{\substack{f \in \Delta_s\\s = m,m-1}}
\| K_{m,f}^k u \|_{L^2(\Omega_f)}^2
\le c \sum_{m=0}^{n-1} \sum_{\substack{f \in \Delta_s\\s = m,m-1}}
\|  u \|_{L^2(\Omega_f)}^2
\le c \| u \|_{L^2(\Omega)}^2.
\]
However, as a consequence of Lemma~\ref{lem:L2-W} and
\eqref{def-B}--\eqref{def-B-n}, this bound implies the desired bound
on $\sum_f B_f^k$.
\end{proof}

Finally, the proof below will complete the discussion of this section.
\begin{proof}(of Lemma~\ref{lem:expansion})
Recall that the functions $\{w_{e,f} \}$ and $\{z_{e,f}\}$ are defined
inductively with respect to decreasing values of $m$ through the
relations \eqref{w-to-z}, \eqref{F-def}, and \eqref{F-def-special}. We
recall that $w_{\emptyset,f} = - (\kappa_f/\Omega_f)\vol$ for $f \in
\Delta_n$, corresponding to the case $m=n$. As an induction
hypothesis, we assume that all the functions $\{w_{e,f}\}$, for $(e,f)
\in \Delta_{j,m}$, $-1 \le j <n-m$, admit a uniformly bounded
expansion. As a consequence of \eqref{w-to-z}, we then have that the
same property holds for all $\{z_{e,f}\}$ for $(e,f) \in
\Delta_{j,m-1}$.  Furthermore, by expanding the functions $\mu_e(f)$,
we derive from \eqref{w-to-z} and \eqref{F-def} that for $(e,f) \in
\Delta_{j,m-1}$, $-1 \le j <n-m$,
\begin{equation}\label{F-def-alt}
w_{e,f} = (-1)^j \sum_{e^\prime \in \Delta_{j+1}(f^*)} a_{e^\prime,e} z_{e^\prime,f},
\end{equation}
where the coefficients $a_{e^\prime,e}$ are obtained from
$\mu_{e^\prime}(f)$, cf. \eqref{mu-expand}, i.e., $\mu_{e^\prime} =
\sum_e a_{e^\prime,e} \phi_e$.  Hence, we can conclude by the
induction hypothesis and the fact that $\mu_e^{\prime}$ has a
uniformly bounded expansion, that the left hand side of
\eqref{F-def-alt} also has a uniformly bounded expansion.

It remains to bound $w_{e,f}$ for $(e,f) \in \Delta_{n-m,m-1}$.  It
follows from \eqref{beta-expand} that the expansion of  $\beta_e =
\beta_e(f)$ is given from the corresponding expansion for $\mu_e(f)$ by 
\[
\beta_e = \sum_{e^\prime \in \Delta_j(f^*)} b_{e,e^\prime} \phi_{e^\prime},
\quad b_{e,\e^\prime} = (\delta a_{e,\cdot})_{e^\prime} + (-1)^j  1_{e,e^\prime},
\]
where $j = n-m$, and we obtain from \eqref{F-def-special} that 
\begin{equation}\label{F-def-special-alt}
  dw_{e,f} =   \sum_{e^\prime \in \Delta_j(f^*)} b_{e^\prime, e} z_{e^\prime,f},
  \quad j= n-m.
\end{equation}
As above, we already know that the right hand side of
\eqref{F-def-special-alt} admits a uniformly bounded expansion.
Furthermore, since $w_{e,f} \in \0\P_1^-\Lambda^{m-1}(\T)$, it can be
expanded in the form
\[
w_{e,f} = \sum_{\substack{g \in \Delta_{m-1}\\ g \supset f}} c_g \phi_g \,
\Longrightarrow dw_{e,f} =
\sum_{\substack{g \in \Delta_{m}\\ g \supset f}} (\delta_{m-1} c)_g \phi_g, 
\]
where we have used \eqref{span-d}. As a consequence , the coefficients
$\delta_{m-1}c$ are uniformly bounded.  Also by definition, $d_{ f}^*
w_{e,f} = 0$, which means that $\partial_{m-1} c= 0$, cf. \eqref{df*}.
Recall that by local exactness, the coefficients $\{c_g\}$ are
uniquely determined by $\partial c$ and $\delta c$.  Furthermore,
there are no mesh dependent quantities present in the matrix
representation of the operators $\delta$ and $\partial$, just $1, -1,
0$. Therefore, since the number of simplices in $\T_f$ is bounded by
the shape-regularity constant, we can conclude that
\[
\max_{\substack{g \in \Delta_{m-1}\\ g \supset f}} |c_g| \le c 
\max_{\substack{g \in \Delta_m\\ g \supset f}} (\delta c)_g,
\]
where also the constant $c$ is bounded by the shape-regularity
constant. This completes the induction step and hence the proof of the
Lemma.
\end{proof}

\section{Final remarks}
\label{sec:final}
As we have already mentioned in the introduction of the paper, if we
restrict to scalar valued piecewise polynomial functions, or
zero-forms, in two or three space dimensions, then the decompositions
studied here, cf. \eqref{main-decomp}, correspond to decompositions
constructed in \cite{additive-schwarz}.  However, the construction
done in this paper deviates from the one presented in
\cite{additive-schwarz}.  In both these procedures, the given function
is decomposed into the sum of a global piecewise linear function and
local bubbles with support on the macroelements associated to the
vertices, edges, faces and tetrahedrons of the mesh.  In
\cite{additive-schwarz}, it is established that the decomposition is
stable in $H^1$, cf. Theorem 2.1 of that paper, and by combining this
with the bounded overlap property of the macroelements, it follows
that the corresponding additive Schwarz preconditioner behaves
uniformly with respect to $hp$-refinements, i.e., in particular with
respect to the polynomial degree. Here we have established that the
decomposition \eqref{main-decomp} is uniformly stable in $L^2$,
cf. Proposition~\ref{prop:L2bound}, and, since the operators $B_f^k$
commute with the exterior derivative, the decomposition is also stable
in $H^1$. Furthermore, the decomposition \eqref{main-decomp}, and the
bounds of Proposition~\ref{prop:L2bound} are established for all
$k$-forms and any space dimension, and due to the invariant property
\eqref{space-preserve}, these bounds apply to all finite element
spaces of the form $\P_r\Lambda^k$ and $\P_r^-\Lambda^k$.  As a
consequence, by following the setup discussed for example in
\cite[Section 10]{acta}, we can use these results to design
preconditioners for the mixed systems corresponding to discrete
Hodge-Laplace problems, which will lead to condition numbers that are
bounded uniformly with respect to $hp$-refinements. In particular, in
three dimensions, these results apply to finite element
discretizations of the spaces of vector fields, i.e., $H(\curl)$ and
$H(\div)$.
 
There is another special feature of the decomposition
\eqref{main-decomp} that we should point out.  For more standard
decompositions of the form \eqref{main-decomp}, derived from the
so-called degrees of freedom, cf. for example \cite[Theorem
  5.5]{bulletin}, only the macroelements corresponding to simplexes of
dimension greater or equal to $k$ will be utilized. For example, in
the three dimensional case, piecewise polynomials in $H\Lambda^1 \sim
H(\curl)$ can be decomposed into local contributions with support on
the macroelements associated to $f \in \Delta_m(\T)$, \, $1 \le m \le
3$, while for functions in $H\Lambda^2 \sim H(\div)$, we can restrict
to $2 \le m \le 3$. However, this restriction does not apply to the
decomposition \eqref{main-decomp} studied in this paper. In general,
we will utilize all values of $m$, $0 \le m \le n$, for all values of
$k$.
 
Extension operators have been a common tool used in many previous
papers to study the dependence of finite element methods with respect
to the polynomial degree. More precisely, properties of extensions
from a lower dimensional simplex to an $n$-simplex of the mesh are
utilized. This approach goes back to \cite{babuska-suri} in two space
dimensions, and with a simplification and an extension to three
dimensions given in \cite{munoz-sola}.  In particular, such operators
are used in \cite{additive-schwarz} for the construction of edge
bubbles in two space dimensions, and for the face bubbles in three
dimensions.  In contrast to this approach, such operators are not used
in the analysis given in this paper.  Here, we rely entirely on the
modification of the average operators represented by the trace
preserving operators $C_m^k$ defined by \eqref{Cm-rewritten}.  In the
case of zero forms, in previous papers, the bubbles are then
constructed by a sequential procedure, starting with vertex bubbles as
explained in Section~\ref{sec:scalar} above. This is the construction
utilized in \cite{bubble-I}, and it is closely related to procedures
used for the vertex bubbles and the edge bubbles in three dimensions
in \cite{additive-schwarz}. The construction done in \cite{K-M-R} on a
two dimensional boundary of a three dimensional domain also follows
this path.  However, in order to design a procedure that works well
also for differential forms of higher order, the construction done in
this paper deviates from the sequential procedure above, even in the
case of zero-forms.  We explained this alternative approach for
zero-forms in Section~\ref{sec:scalar}, where we show that even if the
individual operators $C_{m,f}^0$ do not preserve piecewise smoothness,
their sum will have this property. In fact, we study the difference
$C_m^0 - C_{m-1}^0$ and discover that this operator admits a
representation of the form
 \[
 (C_m^0 - C_{m-1}^0)u = \sum_{\substack{f \in \Delta_j\\ j= m,m-1}}
 K_{m,f}^0u,
 \]
cf. \eqref{decomp-0}, where the operators $K_{m,f}^0$ are local
operators with the support of $K_{m,f}^0u$ in $\Omega_f$.
Furthermore, a key observation is that the operators $K_{m,f}^0$
preserve piecewise smoothness due to an alternative representation,
derived by applying the boundary operator, $\partial_0$, to the
collection $\{\lambda_i - \frac{\rho_f}{|f^*|}\}_{x_i \in f^*}$. This
collection is seen as an element of the space $\C_0(f^*) \otimes
\P_1(\T)$, i.e., a space of $\P_1$-valued $0$-chains defined on the
link $f^*$.  This construction appears to be different from what has
been used before even in the case of zero forms, and motivates the
construction done for $k$-forms later in the paper. In particular, the
results obtained in Section~\ref{sec:mesh-functions} for the local
structure of the mesh, derived from the spaces $\C_j(f^*) \otimes
\P_1^-\Lambda^k(\T,f^*)$, are essential. These results are used to
construct the desired weight functions for the order reduction
operators, $R_{e,f}^k$, in Section~\ref{sec:order-reduct}, a key tool
used to define the desired local operators, $K_{m,f}^k$, in
Section~\ref{sec:Kmgk}. Finally, in Sections~\ref{sec:fund-ident} and
\ref{sec:bounds} we verify that the construction fulfills the results
stated in Theorem~\ref{thm:main}.

\bmhead{Acknowledgements}
The authors are grateful to Snorre H. Christiansen for helpful
discussions regarding the background material presented in
Section~\ref{sec:prelims}.

\bmhead{Funding}
The research leading to these results has received funding from the
European Research Council under the European Union's Seventh Framework
Programme (FP7/2007-2013) / ERC grant agreement 339643.

\bibliographystyle{amsplain}

\end{document}